\newcommand{\Sym}[1]{{{\mathfrak{S}_{#1}}}}
\newcommand{\Class}[1]{{{\mathcal{C}_{#1}}}}
\newtheorem{thm}{Theorem}[section]
\newtheorem{lem}[thm]{Lemma}
\newtheorem{cor}[thm]{Corollary}
\newtheorem{conjecture}[thm]{Conjecture}
\theoremstyle{definition}
\newtheorem{remark}[thm]{Remark}
\newtheorem{example}[thm]{Example}
\newcommand{\p}{{\pi}}
\renewcommand{\a}{{\alpha}}
\renewcommand{\b}{{\beta}}
\newcommand{\s}{{\sigma}}
\newcommand{\Q}{\mathbb{Q}}
\newcommand{\C}{\mathbb{C}}
\newcommand{\pr}[1]{\left( {#1} \right)}
\newcommand{\map}[3]{{#1} \! : \! {#2} \! \longrightarrow \! {#3}}
\newcommand{\len}[1]{\ell({#1})}
\newcommand{\hsf}[1]{{h_{#1}}}
\newcommand{\esf}[1]{{e_{#1}}}
\newcommand{\ssf}[1]{{s_{#1}}}
\newcommand{\mathscr}[1]{\mathcal{#1}}
\newcommand{\ocfacts}[2]{c_{#1\,;\,#2}}
\newcommand{\ocGS}[1]{\Psi_{#1}}
\newcommand{\okGS}[2]{\Psi_{#1,#2}}
\newcommand{\mgfacts}[2][g]{M_{#2, #1}}
\newcommand{\igfacts}[3][g]{\widetilde{C}_{{#2},{#1}}(#3)}
\newcommand{\pgfacts}[3][g]{P_{{#2},{#1}}(#3)}
\newcommand{\cgfacts}[3][g]{F_{{#2},{#1}}(#3)}
\newcommand{\icfacts}[2]{\tilde{c}_{#1\,;\,#2}}
\newcommand{\pGS}[1]{\widehat{\Psi}_{#1}}
\newcommand{\icGS}[1]{\widetilde{\Psi}_{#1}}
\newcommand{\ikGS}[2]{\widetilde{\Psi}_{#1,#2}}
\newcommand{\class}[1]{\mathscr{C}_{#1}}
\newcommand{\depth}[1]{\langle #1 \rangle}
\newcommand{\target}[1]{\Pi\left(#1\right)}
\newcommand{\fmap}[1]{\mathscr{M}_{#1}}
\newcommand{\fgraph}[1]{\mathscr{G}_{#1}}
\newcommand{\bq}{{\mathbf{q}}}
\newcommand{\bx}{{\mathbf{x}}}
\newcommand{\xis}{\phi}
\newcommand{\QD}{P}
\newcommand{\QS}{S}
\newcommand{\QQ}{Q}
\newcommand{\ve}{\nu_{\mathrm{e}}}
\newcommand{\vo}{\nu_{\mathrm{o}}}
\newcommand{\pd}[2]{\frac{\partial #1}{\partial #2}}
\newcommand{\Dop}[1]{D_{#1}}
\newcommand{\Dopx}{\Dop{\bx}}
\begin{document}


\title{Inequivalent factorizations of permutations}

\author{G.~Berkolaiko}
\address{Department of Mathematics, Texas A\&M University, College Station, TX, USA}
\author{J.~Irving}
\address{Department of Mathematics \& Computing Science, Saint Mary's University, Halifax, NS, Canada}

\begin{abstract}
Two factorizations of a permutation into products of cycles are equivalent if one can be obtained from the other by repeatedly interchanging adjacent disjoint factors. This paper studies the enumeration of equivalence classes under this relation. 

We establish general connections between inequivalent factorizations and other well-studied classes of permutation factorizations, such as monotone factorizations.  We also obtain several specific enumerative results, including closed form generating series for inequivalent minimal transitive factorizations of permutations having up to three cycles. Our derivations rely on a new correspondence between inequivalent factorizations and acyclic alternating digraphs.   Strong similarities between the enumerative results derived here and analogous ones for ``ordinary'' factorizations suggest that a unified theory remains to be discovered. 
\end{abstract}


\maketitle

\section{Introduction}

\subsection{Notation}

We adhere to standard notation and terminology concerning permutations.  We
write $\Sym{n}$ for the symmetric group on the symbols
$\{1,2,\ldots,n\}$, and we multiply permutations from right to
left. The number of cycles in $\p \in \Sym{n}$ is denoted by
$\len{\p}$.  For a composition $\a=(\a_1,\ldots,\a_m)$ of $n$, we write
$\class{\a}$ for the \emph{conjugacy class} of $\Sym{n}$ consisting of
all permutations whose disjoint cycles are of lengths
$\a_1,\ldots,\a_m$. Elements of $\class{\a}$ are said to be of
\emph{cycle type} $\a$. Permutations of cycle type $(k,1,1,\ldots,1)$
are called \emph{$k$-cycles}, with $2$-cycles more commonly referred
to as \emph{transpositions}.  We typically suppress cycles
of length 1 when writing permutations in disjoint cycle notation. Thus
$(i\,j)$ denotes a transposition in $\Sym{n}$, with the value of $n$
to be understood from context.

For any list of integers $\b = (\b_1,\b_2,\ldots)$ with finite support, let $|\b| = \sum_k \b_k$ and let $\len{\b}$ be the number of nonzero entries of $\b$. In particular, for $\p \in \Class{\a} \subset \Sym{n}$ we have $|\a|=n$ and $\len{\a} = \len{\p}$.

For an integer partition $\l$ and a set of indeterminates $\bx=(x_1,\ldots,x_n)$, we write $\hsf{\l}(\bx)$, $\esf{\l}(\bx)$ and $\ssf{\l}(\bx)$, respectively, for the complete, elementary, and Schur symmetric polynomials indexed by $\l$.  We adopt the convention that each of these polynomials is 0 when $\l$ is not a partition.

The ring of formal power series in indeterminates $\bx=(x_1,\ldots,x_m)$ over the ring $R$ is denoted by $R[[\bx]]$.  If $f \in R[[\mathbf{x}]]$ and $\mathbf{i}=(i_1,\ldots,i_m)$ is a list of nonnegative integers, then we write
$[\bx^{\mathbf{i}}]\,f$ for the coefficient of the monomial $\bx^{\mathbf{i}} = x_1^{i_1} \cdots x_m^{i_m}$ in $f$.  We let $\Dopx$ denote the total derivative operator on $R[[\mathbf{x}]]$, namely
$\Dopx = \sum_{i=1}^m x_i \pd{}{x_i}$.

\subsection{Factorizations of Permutations}
A \emph{factorization} of a permutation  $\p \in \Sym{n}$ is a tuple $f=(\s_1,\ldots,\s_r)$ where each $\s_i \in \Sym{n}$ and $\p = \s_1 \cdots \s_r$.  The $\s_i$ are the \emph{factors} of $f$. The number of factors, $r$, is the \emph{length} of $f$, and is denoted by $\len{f}$.  We will generally be less formal and
write a factorization simply as the product of its factors.  For instance, 
\begin{equation}
\label{factexample}
(1\,2\,3)(4\,6)\cdot (2\,4\,6\,5)\cdot (1\,4)(2\,3)(5\,6)
\end{equation}
is a factorization of $(1\,4\,2)(3\,6)(5)$ of length $3$.

Let $f$ be a factorization  of $\p \in \Sym{n}$.  We define the \emph{class} of $f$ to be the cycle type of $\p$, while the \emph{signature} of $f$ is
the list $\b=(\b_2,\b_3,\ldots)$, where $\b_k$ is the total number of $k$-cycles amongst all factors. The \emph{depth} of $f$, denoted by $\depth{f}$, is defined as
\begin{equation*}
 \depth{f}=\sum_{j\ge 2} (j-1)\b_j.
\end{equation*}  
This is the minimum total number of transpositions required to decompose all factors of $f$.   Note that the depth of a factorization increases when an additional factor is inserted, except in the case where the extra factor is the identity.  
The factorization~\eqref{factexample}, above, is of class $(3,2,1)$, signature $(4,1,1,0,\ldots)$ and depth $9$.

A factorization in $\Sym{n}$ is {\em transitive} if the group generated by its factors acts transitively on $\{1,2,\ldots,n\}$. For instance, ~\eqref{factexample} is a  transitive factorization in $\Sym{6}$, whereas
\begin{equation*}
(1\,3\,2)(5\,6)\cdot (2\,4)(1\,3) \cdot (1\,4)(5\,6)
\end{equation*}
is not because $\{5,6\}$ is an invariant subset.  It is not difficult to show that for every transitive factorization $f$ of $\p \in \Sym{n}$ there is a unique nonnegative integer $g$ such that
\begin{equation}
  \label{eq:rank} 
  \depth{f} = n+\len{\p}-2+2g.
\end{equation} 
This $g$ is called the \emph{genus} of $f$.  A factorization is of genus 0 precisely when it is transitive and has minimal depth among all factorizations of the same class.  For this reason, genus 0 factorizations are said to be \emph{minimal transitive}.


Permutation factorizations have been studied for a long time in various  guises.
From an algebraic point of view, every question regarding factorizations is a question about the  structure of the symmetric group, and there is a well-trodden bridge between factorizations and the representation theory of $\Sym{n}$. (See, for instance,~\cite{BedGou92,GouJac94,Jac88,Stan81}.)
Factorizations also have a geometric flavour, in the sense that they encode cellular decompositions of surfaces --- that is, maps. (See~\cite{BouSch_aam00, GouSch98,MorVas13} and references therein.)
The notions of transitivity and genus of factorizations arise naturally from the geometric point of view, being equivalent to connectedness and genus of the associated maps. 



\subsection{Cycle Factorizations}
\label{sec:cyclefacts}

This paper is primarily concerned with the combinatorics of \emph{cycle factorizations}, which are factorizations in which every factor is a cycle of some length. For example,
$$
	(1\,3)\cdot(2\,4\,5)\cdot(1\,2\,3)\cdot(2\,5)\cdot(3\,6)\cdot(1\,2\,3\,4)
$$
is a cycle factorization with signature $(3,2,1,0,\ldots)$.   A \emph{$k$-cycle factorization} is a factorization in which all factors are $k$-cycles.

The study of $2$-cycle factorizations (\emph{i.e.} factorizations into transpositions) dates back at least to Hurwitz, who used them to encode topologically inequivalent branched coverings of the sphere. Hurwitz found~\cite{Hur_ma91} the following beautiful formula for the number of minimal transitive 2-cycle factorizations of any permutation $\pi \in \Sym{n}$ of cycle type $\a=(\a_1,\ldots,\a_m)$:
\begin{equation}
\label{eq:hurwitz}
		 n^{m-3} (n+m-2)! \prod_{i=1}^m \frac{\a_i^{\a_i}}{(\a_i-1)!}.
\end{equation}
See~\cite{BouSch_aam00,GouJac97} for modern derivations of Hurwitz's formula and~\cite{Str96} for a reconstruction of Hurwitz's original proof.
More recently, the celebrated ELSV formula has extended this geometric connection to link the combinatorics of 2-cycle factorizations with the intersection theory of moduli spaces of curves; see~\cite{ELSV01,GouJacVak01,GouJacVak05,OkoPan09}.

Taking $\a=(n)$ in Hurwitz's formula shows that there are $n^{n-2}$ factorizations of the full cycle $(1\,2\,\cdots\,n)$ into $n-1$ transpositions.\footnote{All factorizations of the full cycle are necessarily transitive.}  This famous result is often attributed to D\'enes~\cite{Den59}, who proved it using a correspondence with labelled trees.  More generally, it is known~\cite{Irv_cjm09,Spr96} that there are 
\begin{equation}
\label{eq:fullcyclefacts}
		\frac{n^{\ell-1} \ell!}{\prod_{k} \b_k!}
\end{equation}
minimal transitive factorizations of $(1\,2\,\cdots\,n)$ with signature $\b=(\b_2,\b_3,\ldots)$ and length $\ell=|\b|$.

Succinct counting formulae such as~\eqref{eq:hurwitz} and~\eqref{eq:fullcyclefacts} do not exist for any other classes of cycle factorizations, even in the minimal transitive case.  Nonetheless, there is evidence to suggest these factorizations have a rich combinatorial structure.  (See Section~\ref{sec:survey_connections}.)

\subsection{Inequivalent Factorizations}
\label{sec:inequivalent_facts}

There is a natural equivalence relation $\sim$ on the set of cycle factorizations,  defined by stipulating that two such factorizations are equivalent if one can be obtained
from the other by iteratively swapping adjacent, disjoint
(and hence commuting) factors.  For example,
\begin{equation*}
  (3\,4\,5)\cdot(1\,2)\cdot(2\,3\,5)\cdot(1\,4) \sim (1\,2)\cdot(3\,4\,5)\cdot (1\,4) \cdot(2\,3\,5).
\end{equation*}
Although this relation can be extended to arbitrary factorizations in an obvious way, we emphasize that we have defined it only for cycle factorizations.

The principal focus of this paper is the combinatorics of equivalence classes of cycle factorizations under this relation. As such, we shall abuse terminology and henceforth refer to the class containing such a factorization $f$ simply as the \emph{inequivalent factorization $f$}. Note that length, class, signature and depth are invariant under commutation of disjoint adjacent factors, so it is sensible to apply these terms to inequivalent factorizations. 

Let us write $\icfacts{\a}{\b}$ for the number of minimal transitive inequivalent factorizations with signature $\b$ of any permutation of cycle type $\a$. It is convenient to define, for each $m \geq 1$, the generating series
\begin{align}
\label{eq:icGS}
		\icGS{m}(\bx,\bq) =
	 		\sum_{\a, \b} \icfacts{\a}{\b} \frac{x_1^{\a_1} \cdots x_m^{\a_m}}{\a_1 \cdots \a_m} \bq^{\b},
\end{align}
where the sum extends over all $m$-part compositions $\a=(\a_1,\ldots,\a_m)$ and all finitely supported lists $\b = (\b_2,\b_3,\ldots)$ of nonnegative integers. Throughout this article, the indeterminate $q_k$ is a marker for $k$-cycles and $\bq=(q_2,q_3,\ldots)$.  Let $\ikGS{m}{k}$ be the restriction of $\icGS{m}$ to $k$-cycle factorizations, obtained  by setting $q_k=1$ and $q_i=0$ for $i \neq k$.  

In comparison with their ``ordinary'' analogues,  little is known about inequivalent factorizations, and all specific enumerative results are restricted to the minimal transitive case.  The first such results were obtained by Eidswick~\cite{Eid89} and Longyear~\cite{Lon89}, who independently showed that there are
$$
	\icfacts{(n)}{(n-1)}=\frac{1}{n-1}\binom{3n-3}{n-2}
$$
inequivalent factorizations of the full cycle $(1\,2\,\cdots\,n)$ into $n-1$ transpositions. (Note that these are necessarily minimal transitive.)  Longyear's approach involved commuting factorizations into canonical forms, leading to the functional equation
\begin{equation}
\label{eq:longyear}
	h = 1 + xh^3
\end{equation}
for the series $h(x) = \pd{}{x} \ikGS{1}{2}(x)$.  This result was extended to $k$-cycle factorizations by Goulden and Jackson~\cite{GouJac94}, who obtained  $\ikGS{1}{k}$ as a corollary of their work on Macdonald's $u_\lambda$ symmetric functions. Springer~\cite{Spr96} generalized Longyear's canonical form and used a correspondence with trees to derive the following analogue of~\eqref{eq:fullcyclefacts} for inequivalent factorizations of the full cycle:
\begin{equation}
\label{eq:springer}
	\icfacts{(n)}{\b} = \frac{(2n+\ell-2)!}{(2n-1)! \prod_k \b_k!}.
\end{equation}

Inequivalent factorizations of permutations other than the full cycle were first studied by Goulden, Jackson and Latour \cite{GouJacLat01}, who showed that
\begin{equation}
\label{eq:gouldenjackson}
	\ikGS{2}{2}(x_1,x_2) = \log\pr{1+x_1 x_2 h(x_1) h(x_2) \frac{h(x_1)-h(x_2)}{x_1-x_2}},
\end{equation}
where $h$ is defined by~\eqref{eq:longyear}.
Their derivation again employs commutation to canonical form, but also relies on a somewhat intricate inclusion-exclusion argument.  Although not stated in~\cite{GouJacLat01}, it is possible to extract coefficients from this series to obtain the following ``inequivalent'' analogue of~\eqref{eq:hurwitz} in the case where $\a$ has two parts (see Section~\ref{sec:transpositions} for details):
\begin{equation}
\label{eq:coeffextract}
		\icfacts{(n,m)}{(n+m)}=\frac{2nm}{n+m} \sum_{k \geq 0} \binom{3n}{n-1-k}\binom{3m}{m-1-k}.
\end{equation}

Springer's formula~\eqref{eq:springer} was proved again in~\cite{BerHarNov_prep08,Irv_cjm09}, where a simple functional equation for $\icGS{1}$ was derived from graphical models for inequivalent factorizations closely related to those employed in this article (see Theorem~\ref{thm:digraphs}). In~\cite{Irv_cjm09} this approach was also used to yield a compact expression for $\icGS{2}$, generalizing the Goulden-Jackson-Latour series~\eqref{eq:gouldenjackson}.  These results will be restated in an alternative form and reproved below (see Theorem~\ref{thm:Q}).
 
Although inequivalent factorizations were initially studied as a
combinatorial curiosity, we will witness
surprisingly close structural ties between them and their ``ordinary''
cousins.  Recently, inequivalent factorizations have also appeared in
the physics literature in connection to quantum chaotic transport (see
Section~\ref{sec:physics}).

\section{Survey of Results}
\label{sec:survey}


Our results on inequivalent factorizations can be separated into two distinct, but not wholly disjoint, categories:   (1) general relationships with other  classes of factorizations, and (2) specific enumerative results.

The bulk of our technical effort has been dedicated to the enumeration of minimal transitive inequivalent factorizations. While we have substantially extended all previous work along these lines, we believe the relationships we have uncovered between various classes of factorizations (both proven and conjectured) are of greater fundamental interest than our specific enumerative results.  As such, we have organized the article to emphasize these connections.

In this section we present a high level summary of our work, deferring various technical details until later.  We hope this affords the reader a glimpse at the grand structure of transitive factorizations.

\subsection{Connections with Other Classes of Factorizations}
\label{sec:survey_connections}

For a composition $\a=(\a_1,\ldots,\a_m)$ of $n$, let
$\cgfacts{\a}{r}$ denote the number of (transitive) genus $g$
factorizations of length $r$ of any $\p \in \Class{\a}$. Let
$\pgfacts{\a}{r}$ be the number of these which are \emph{proper}, by
which we mean that no factor is the identity permutation.  We stress
that $\cgfacts{\a}{r}$ and $\pgfacts{\a}{r}$ count factorizations into
permutations of arbitrary cycle structure, as opposed to inequivalent
factorizations which are cycle factorizations.

Since the removal of identity factors does not alter depth or transitivity, every genus $g$ factorization of class $\a$ can be built by inserting identity factors into a unique proper factorization of the same class and genus. In this way we obtain
\begin{equation}
\label{eq:polynomial}
	\cgfacts{\a}{r} = \sum_k \binom{r}{k} \pgfacts{\a}{k}
\end{equation}
for all nonnegative integers $r$.  Since each factor of a proper factorization contributes at least 1 to depth, \eqref{eq:rank} implies that $\pgfacts{\a}{k} = 0$ whenever $k > n+m-2+2g$.  Thus the right-hand side of~\eqref{eq:polynomial} is polynomial in $r$. We therefore extend the definition of $\cgfacts{\a}{r}$ to all values of $r$ by identifying it with this polynomial.

Transitive factorizations of specified length have been studied only in genus 0, in which case they correspond with a class of planar maps known as \emph{constellations}.  Bousquet-Melou and Schaeffer have counted constellations via an ingenious bijective decomposition  into decorated trees,  showing in~\cite{BouSch_aam00} that\footnote{  
Hurwitz's formula~\eqref{eq:hurwitz} can be obtained as an ``extremal'' case of~\eqref{eq:constellation}. See~\cite{BouSch_aam00} for details.} 
  \begin{equation}
    \label{eq:constellation}
    \cgfacts[0]{\alpha}{r} = r ((r-1)n-1)_{(m-3)} \prod_{i =1}^m \a_i \binom{r\a_i-1}{\a_i},
    \qquad r \geq 2,
  \end{equation}
where  $x_{(k)}=x(x-1)(x-2)\cdots(x-k+1)$ for $k \geq 0$ and $x_{(-k)}=1/(x+k)_{(k)}$. 
Since both sides are polynomial in $r$ and equality holds for all $r \geq 2$, we conclude that~\eqref{eq:constellation} is a polynomial identity.


A \emph{monotone factorization} (also called a \emph{primitive}
factorization) is a 2-cycle factorization whose factors weakly
increase from left to right with respect to greatest element. That is,
factorization $(a_1\,b_1)\cdot(a_2\,b_2)\cdots(a_r\,b_r) $ is monotone when $1 \leq a_i
< b_i \leq n$ for all $i$ and $b_1 \leq b_2 \leq \cdots \leq b_r$.  For instance,
$$
	(2\,3)\cdot (3\,4)\cdot (1\,4)\cdot (3\,4)\cdot (4\,5)	
$$
is a minimal transitive monotone factorization of $(1\,2\,3)(4\,5)$.  Monotone factorizations of the full cycle were initially studied by Gewurz and Merola~\cite{GewMer06}, who showed they are counted by the Catalan numbers.  Matsumoto and Novak~\cite{MatNov_fpsac10,MatNov_imrn13} later initiated a more general study in connection with the
expansion of certain matrix integrals.  A
thorough ``cut-join'' analysis was given in~\cite{GouGuaNov_cjm13}, where the structure of transitive monotone factorizations was shown to closely parallel the that of general  2-cycle factorizations.  

For any $\p \in \Class{\a}$, let $\mgfacts{\a}$ be the number of genus $g$ monotone factorizations of $\p$, and let $\igfacts{\a}{r}$ be the number of inequivalent genus $g$ factorizations of $\p$ of length $r$. Then we have the following relationship between factorizations of fixed genus and their proper, inequivalent, and monotone variants.

\begin{thm}
  \label{thm:connections}
  For any composition $\a$ and any $g \geq 0$,
  \begin{align*}
    \label{eq:connections}
        (-1)^{|\a|+\len{\a}} \,\mgfacts{\alpha} 
        = \sum_{r \geq 0} (-1)^{r} \igfacts{\alpha}{r}
         	= \sum_{r \geq 0} (-1)^{r} \pgfacts{\alpha}{r} = \cgfacts{\alpha}{-1}.
  \end{align*}
\end{thm}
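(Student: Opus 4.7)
The plan is to establish the chain of equalities from right to left, using the polynomial structure of $\cgfacts{\alpha}{r}$, a heap-theoretic reformulation of inequivalent factorizations, and the Jucys--Murphy element formalism.

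The rightmost equality $\sum_r (-1)^r \pgfacts{\alpha}{r} = \cgfacts{\alpha}{-1}$ follows immediately from \eqref{eq:polynomial}. Since $\pgfacts{\alpha}{k}$ vanishes for $k > |\alpha|+\len{\alpha}-2+2g$, the right-hand side of \eqref{eq:polynomial} is a finite sum defining the polynomial extension of $\cgfacts{\alpha}{r}$. Substituting $r = -1$ and using $\binom{-1}{k} = (-1)^k$ yields the identity.

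For the middle equality, I would rewrite both sums as sums over heaps (Cartier--Foata traces). Every proper factorization $(\sigma_1,\ldots,\sigma_k)$ of class $\alpha$ and genus $g$ collapses to an inequivalent cycle factorization of the same class and genus by replacing each $\sigma_i$ with its disjoint cycle decomposition; this map is well-defined because disjoint cycles commute, and it preserves signature (and hence depth, genus, and transitivity). An equivalence class $[f]$ of length $\ell$ corresponds to a heap $H$ on $\ell$ cycles with partial order $c <_H c'$ iff $c, c'$ share a symbol and $c$ precedes $c'$ in every linear extension. Then the proper factorizations mapping to $[f]$ are in bijection with strict order-preserving surjections $\phi\colon H \twoheadrightarrow [k]$: each preimage $\phi^{-1}(i)$ must be an antichain (the cycles of $\sigma_i$), and any two comparable elements must lie in different blocks. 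Writing $S_H(k)$ for the number of such surjections and $\Omega^*_H(k)$ for the strict order polynomial of $H$, one has $\Omega^*_H(k) = \sum_j \binom{k}{j} S_H(j)$, hence $\sum_j (-1)^j S_H(j) = \Omega^*_H(-1) = (-1)^\ell$ by Stanley's order polynomial reciprocity. Summing over all $[f]$ of the prescribed class and genus completes the step.

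The leftmost equality $(-1)^{|\alpha|+\len{\alpha}} \mgfacts{\alpha} = \cgfacts{\alpha}{-1}$ is the main obstacle, directly linking monotone factorizations to transitive Hurwitz-type numbers. My plan is to employ the Jucys--Murphy element formalism: the central series $\prod_{k=2}^n (1-u J_k)^{-1}$ in $\mathbb{C}[\Sym{n}][[u]]$ acts on each irreducible module $V^\lambda$ by the scalar $\prod_{c \in \lambda}(1-uc)^{-1}$, and the coefficient of a permutation $\pi$ in its expansion generates monotone factorizations of $\pi$ by length; the exponential formula then isolates the transitive, genus-$g$ contribution $\mgfacts{\alpha}$. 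The Frobenius character formula gives a parallel expression for $\cgfacts{\alpha}{r}$ in terms of the same irreducible characters evaluated on class $\alpha$, and evaluation at $r = -1$ should reduce the comparison to a generating-function reciprocity between $\prod_c(1-uc)^{-1}$ and a companion product, with the sign $(-1)^{|\alpha|+\len{\alpha}}$ emerging from the parity of the genus-$g$ depth $|\alpha|+\len{\alpha}-2+2g$. Alternatively, the correspondence between inequivalent factorizations and acyclic alternating digraphs introduced later in the paper may furnish a more direct combinatorial bridge, reducing the identity to a sign-reversing involution that cancels all signed inequivalent factorizations except those matched bijectively with monotone ones.
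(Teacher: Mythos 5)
Your treatment of the rightmost equality matches the paper (evaluate \eqref{eq:polynomial} at $r=-1$ using $\binom{-1}{k}=(-1)^k$), and your heap-theoretic argument for the middle equality is correct: for a fixed inequivalent factorization $[f]$ of length $\ell$, the proper factorizations collapsing to it are counted by strict order-preserving surjections from the Cartier--Foata poset of $[f]$ onto $[k]$, and $\sum_k(-1)^k S^*_{[f]}(k)=\Omega^*_{[f]}(-1)=(-1)^\ell$ by Stanley's order-polynomial reciprocity, so each fiber contributes $(-1)^\ell$ to the alternating sum. The paper instead invokes the Cartier--Foata theorem wholesale (Theorem~\ref{thm:propercorrespondence}), keeping the full refinement by signature as an identity in $\Q\Sym{n}[[\bq]]$; your version trades that generality for an explicit poset-by-poset reciprocity computation. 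Both are valid, and the paper itself notes that combinatorializing its proof essentially re-derives Cartier--Foata.

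The leftmost equality is where your proposal has a genuine gap. You correctly flag it as the main obstacle and then offer only unfinished plans. The Jucys--Murphy/Frobenius route you sketch can be made to work --- the paper remarks that the monotone/proper link also follows from comparing Matsumoto--Novak with Collins --- but you do not carry out the crucial steps: isolating the transitive genus-$g$ piece of $\prod_k(1-uJ_k)^{-1}$, setting up the analogous content-polynomial expression for $\cgfacts{\alpha}{r}$, and establishing (rather than hoping for) the ``generating-function reciprocity'' at $r=-1$. Your alternative idea, a sign-reversing involution via the digraph correspondence, names the right kind of argument but the wrong tool; the digraph model serves only the enumerative results of later sections. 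The paper's proof of Theorem~\ref{thm:connections} leans on Theorem~\ref{thm:primcorrespondence}, whose combinatorial proof is an explicit involution on trace-minimal representatives of inequivalent factorizations: locate the leftmost factor violating monotonicity and either pull out the transposition $(a\,m)$ or merge it back in, changing $\len{f}$ by $\pm1$ while preserving target, depth, transitivity and genus. Non-monotone classes then cancel in pairs under the weight $(-1)^{\len{f}}(-u)^{\depth{f}}$, leaving exactly the monotone factorizations; restricting to genus $g$ and extracting the coefficient of $\pi u^d$ with $d=|\alpha|+\len{\alpha}-2+2g$ yields the claimed sign $(-1)^{|\alpha|+\len{\alpha}}$. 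You would need to produce a comparably explicit mechanism (or carry the character-theoretic route through in detail) before the leftmost equality is actually proved.
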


Note that the rightmost equality in Theorem~\ref{thm:connections} is
obtained simply by evaluating~\eqref{eq:polynomial} at $r=-1$.  Thus
the true content of the theorem is the other equalities, which will be
established in Section~\ref{sec:connections} as consequences of
somewhat more general results.  In particular,
Theorem~\ref{thm:propercorrespondence} describes the connection
between proper and inequivalent factorizations (which comes by way of
the Cartier-Foata commutation monoid and which remains valid when
controlling for the signature $\beta$ of the factorization) and
Theorem~\ref{thm:primcorrespondence} provides the link between
monotone and inequivalent factorizations (for which we provide both
combinatorial and algebraic proofs).

Interestingly, the relationship between $\mgfacts{\alpha}$ and $\pgfacts{\alpha}{r}$ can also be deduced by comparing the work of Matsumoto/Novak~\cite{MatNov_fpsac10,MatNov_imrn13} and Collins~\cite[Theorem
2.4]{Col_imrn03}, where enumerations of monotone and proper
factorizations, respectively, appear in the asymptotic expansion of
integrals over the unitary group.  We also note that, while the appearance of $\cgfacts{\alpha}{-1}$ in Theorem~\ref{thm:connections} is reminiscent of Stanley's evaluation of the chromatic polynomial (to count acyclic orientations), we are not aware of any combinatorial meaning of $\cgfacts{\alpha}{-k}$ in general.

From Theorem~\ref{thm:connections} we can immediately recover the following beautiful counting formula for minimal transitive monotone factorizations, originally due to Goulden, Guay-Paquet and Novak ~\cite{GouGuaNov_cjm13}.   (During the preparation of this article we discovered that Chapuy~\cite{Chapuy_private13} has independently arrived at this result in essentially the same manner.)

\begin{cor}[{\cite[Theorem 1.1]{GouGuaNov_cjm13}}]
  \label{thm:goulden}
  For any composition $\a=(\a_1,\ldots,\a_m)$ of $n$, we have
  \begin{equation*}
\mgfacts[0]{\a} = (2n+1)^{(m-3)} \prod_{i=1}^m \a_i \binom{2\a_i}{\a_i},
  \end{equation*}
  where $x^{(k)} = x(x+1)\cdots(x+k-1)$ and $x^{(-k)} = 1/(x-k)^{(k)}$
  for nonnegative integers $k$.
\end{cor}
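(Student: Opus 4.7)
The plan is to derive the formula by evaluating the Bousquet-M\'elou--Schaeffer constellation count~\eqref{eq:constellation} at the single point $r=-1$ and feeding the result through Theorem~\ref{thm:connections}. Setting $g=0$ in that theorem gives
\[
	\mgfacts[0]{\alpha} = (-1)^{n+m}\, \cgfacts[0]{\alpha}{-1},
\]
where $n=|\alpha|$ and $m=\len{\alpha}$. Hence the problem reduces to evaluating the polynomial $\cgfacts[0]{\alpha}{r}$ at $r=-1$.

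Substituting $r=-1$ into~\eqref{eq:constellation} yields
\[
	\cgfacts[0]{\alpha}{-1} = -\,(-2n-1)_{(m-3)} \prod_{i=1}^m \alpha_i \binom{-\alpha_i-1}{\alpha_i},
\]
so I would simplify each negative-argument factor via standard identities. The falling-factorial reflection $(-x)_{(k)} = (-1)^k\, x^{(k)}$ gives $(-2n-1)_{(m-3)} = (-1)^{m-3}(2n+1)^{(m-3)}$, while the upper-negation identity $\binom{-a-1}{a} = (-1)^a \binom{2a}{a}$ converts the product into $(-1)^n \prod_{i=1}^m \alpha_i \binom{2\alpha_i}{\alpha_i}$.

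Collecting signs, the prefactor accumulated inside $\cgfacts[0]{\alpha}{-1}$ is $(-1)^{1+(m-3)+n} = (-1)^{n+m}$, which exactly cancels the $(-1)^{n+m}$ arising from Theorem~\ref{thm:connections}. What is left is precisely $(2n+1)^{(m-3)} \prod_{i=1}^m \alpha_i \binom{2\alpha_i}{\alpha_i}$, as claimed.

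There is no genuine obstacle here: the argument is entirely sign bookkeeping. The only subtlety worth checking explicitly is that the falling-factorial reflection identity remains valid in the boundary cases $m \in \{1,2\}$, where $x_{(m-3)}$ is interpreted via the reciprocal convention $x_{(-k)} = 1/(x+k)_{(k)}$; a brief case-by-case verification confirms it. Thus the corollary follows at once from Theorem~\ref{thm:connections} together with the constellation formula~\eqref{eq:constellation}.
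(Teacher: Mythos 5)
Your proof is correct and follows exactly the same route as the paper's one-line proof: set $g=0$ in Theorem~\ref{thm:connections} and evaluate the Bousquet-M\'elou--Schaeffer polynomial~\eqref{eq:constellation} at $r=-1$. You simply spell out the sign bookkeeping (the falling-factorial reflection, the upper-negation of the binomial, and the boundary cases $m\in\{1,2\}$ via the reciprocal convention) that the paper leaves implicit.
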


\begin{proof}
Take $g=0$ in Theorem~\ref{thm:connections} and set $r=-1$ in~\eqref{eq:constellation} to evaluate  $\cgfacts[0]{\alpha}{-1}$.
\end{proof}

In the case $\a=(n)$,  Corollary~\ref{thm:goulden} yields the Catalan number $\mgfacts[0]{(n)} = \frac{1}{n}\binom{2n-2}{n-1}$, in accordance with Gewurz and Merola's early result~\cite{GewMer06}.

\subsection{Minimal Transitive Inequivalent Factorizations}
\label{sec:enumerative_results}

Central to our study of inequivalent factorizations  is a new graphical model of them as \emph{alternating  maps}. These are embeddings of  directed graphs in orientable surfaces such that the edges encountered on a cyclic tour around any vertex alternate in direction.  
\begin{figure}[t]
  \centering
  \includegraphics[width=0.45\textwidth]{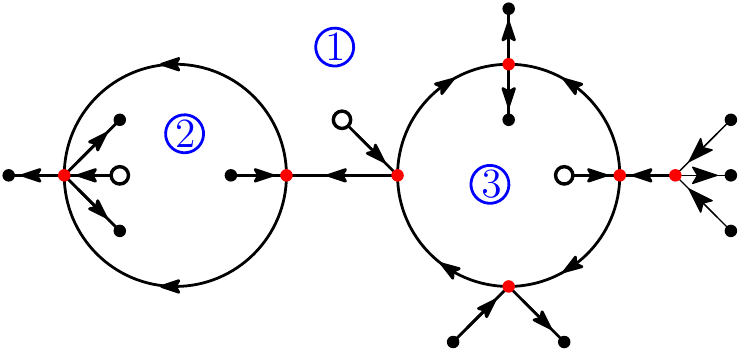}
  \caption{An alternating map satisfying the conditions of Theorem~\ref{thm:digraphs}, with $g=0$, $m=3$, $(\a_1,\a_2,\a_3)=(4,2,1)$ and $(\b_2,\b_3,\ldots)
  =(6,1,0,\ldots)$.}
  \label{fig:mapexample}
\end{figure}
Clearly every vertex of an alternating map is either a leaf or has even total degree, and the out-directed and in-directed leaves are precisely the sources and sinks, respectively.  Fig.~\ref{fig:mapexample} shows a preliminary example of the following correspondence, which will be established in Section~\ref{sec:models}.  
\begin{thm}
  \label{thm:digraphs}
  Let $\a=(\a_1,\ldots,\a_m)$ be an $m$-part composition of $n$ and let $\p \in \Sym{n}$ be of cycle type $\a$.
  Inequivalent genus $g$ factorizations of $\p$
  with signature $\b=(\b_2,\b_3,\ldots)$
  are in one-to-one correspondence with acyclic alternating genus $g$ maps with
  $m$ labelled faces in which 
  \begin{enumerate}
  \item[(a)] every vertex is a source, a sink, or has even total degree $\geq 4$,
  \item[(b)] face $i$ contains $\a_i$ sources and $\a_i$ sinks, with one source distinguished, and
  \item[(c)] there are  $\b_k$ vertices of degree $2k$, for $k \geq 2$.
  \end{enumerate}
\end{thm}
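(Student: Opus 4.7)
The plan is to construct an explicit bijection between the two sets and then verify its properties. Given an inequivalent cycle factorization $f = \s_1 \cdots \s_r$ of $\p$, I would build a directed embedded graph $\fgraph{f}$ as follows. Create one \emph{internal} vertex for each factor $\s_i$, and trace the action of $f$ on each symbol $s \in \{1,\ldots,n\}$ by scanning the factors from right to left (so that $\s_r$ acts first). Each time a factor $\s_i$ transforms the current value $a$ to $\s_i(a)$, introduce an incoming edge at $\s_i$ carrying label $a$ and an outgoing edge at $\s_i$ carrying label $\s_i(a)$, and link the outgoing edge of each factor in the chain to the incoming edge of the next factor acting on the same symbol. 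At the start of each trajectory attach a \emph{source leaf} labelled $s$, and at the end attach a \emph{sink leaf} labelled $\p(s)$. The cyclic order around an internal vertex $\s_i = (a_1\, a_2\, \cdots\, a_k)$ is dictated by the cycle itself: the incoming edge labelled $a_j$ is followed clockwise by the outgoing edge labelled $a_{j+1}$, then by the incoming edge labelled $a_{j+1}$, and so on, so that in-edges and out-edges alternate around a vertex of total degree $2k$. This yields an alternating embedding on an oriented surface.

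The first step is to verify that this construction descends to equivalence classes of factorizations: swapping two adjacent disjoint factors $\s_i\s_{i+1} = \s_{i+1}\s_i$ leaves every symbol trajectory (and hence $\fgraph{f}$) intact. The second step is to match faces with the cycles of $\p$. Walking along the boundary of a face one encounters alternating sources and sinks whose labels trace out a single cycle of $\p$, because an outgoing edge labelled $a$ leaving an internal vertex is followed along the face by an incoming edge labelled $a$ at the next vertex. Hence $\fgraph{f}$ has exactly $m$ faces, each face~$i$ contains $\a_i$ sources and $\a_i$ sinks, and the distinguished source is taken to be the one labelled by a chosen representative of the $i$th cycle. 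Acyclicity follows from the fact that edges respect the left-to-right order of factors, and property~(a) holds because each factor is a cycle of length at least~$2$.

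Next I would verify that the Euler characteristic of the embedding matches the factorization's genus. There are $V = |\b| + 2n$ vertices (one per factor, one per source leaf, one per sink leaf) and $F = m$ faces. Each internal vertex of degree $2k$ contributes $2k$ edge-endpoints, so with $\sum_k k\b_k = \depth{f} + |\b|$ and an additional $2n$ leaf endpoints we obtain $E = n + \depth{f} + |\b|$. Then
\begin{equation*}
V - E + F = n + m - \depth{f} = 2 - 2g,
\end{equation*}
where the last equality uses $\depth{f} = n + m - 2 + 2g$ from~\eqref{eq:rank}. Thus the embedding has the prescribed genus.

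To reverse the construction, given a map $M$ satisfying (a)--(c), I would extract a factorization by fixing any linear extension of the partial order induced by the edge directions, reading each internal vertex as a cycle whose cyclic order is prescribed by the rotation around that vertex, and deducing the symbol labels from the distinguished sources and the face structure by tracing trajectories through $M$. Two linear extensions differ by a sequence of swaps of incomparable (hence disjoint) vertices, and so yield the same equivalence class of factorizations. I expect the main subtlety to lie in this labelling step: verifying that the alternating condition together with the face structure forces sources and sinks in face~$i$ to be matched into a single $\a_i$-cycle, and that the distinguished source supplies exactly the missing datum to recover $\p$ itself rather than only its conjugacy class $\class{\a}$.
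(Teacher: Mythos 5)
Your construction is essentially the paper's: you build the same embedded digraph (one internal vertex per factor, edges tracing the symbol trajectories, leaves at each end of a trajectory, and a rotation at each internal vertex dictated by the cycle), arriving directly at the object that the paper obtains by first drawing a ``shuttle diagram'' and then contracting shuttles. The genus computation via the Euler--Poincar\'e formula is identical, and the observation that incomparable internal vertices in the reachability order carry disjoint cycles is the same fact the paper packages as its lemma that $\fmap{f}=\fmap{f'}$ iff $f\sim f'$. (Your clockwise orientation convention produces the mirror image of the paper's maps, which is enumeratively harmless.)

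The one genuine omission is exactly the step you flag but leave open: forcing the source/sink and edge labels from the map. The paper resolves this with its lemma on faces of alternating maps, which shows that the boundary walk of any face alternates between a forward-directed path from a source to a sink and a backward-directed path from that sink to the next source. Tracing the boundary of the face containing the distinguished source and assigning it the label $c_i$ (least element of the $i$-th cycle of $\p$) therefore forces the remaining sources and sinks of that face to be labelled $\p(c_i), \p^2(c_i), \dots$ in order, and edge labels then propagate inward from the leaves via the alternating rotation rule. Without this argument, ``deducing symbol labels from the face structure'' is an assertion rather than a proof, and it is precisely where the specific permutation $\p$, as opposed to its conjugacy class, enters the reconstruction. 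You should also spell out the justification for ``incomparable, hence disjoint'': if two internal vertices carried cycles sharing a symbol $k$, both would lie on the trajectory of $k$, which is a directed path, forcing comparability.
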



In Section~\ref{sec:proof} we employ this correspondence to give compact expressions for the minimal transitive generating series $\icGS{m}$ in cases $m =1,2,3$.  (See Theorem~\ref{thm:Q}, below.)  The real novelty here is our expression for $\icGS{3}$, since $\icGS{1}$ and $\icGS{2}$ have been found previously in different but equivalent forms~\cite{BerHarNov_prep08,Irv_cjm09,Spr96}. We have restated these  results  for completeness and unification. Both will be proved in Section~\ref{sec:proof} as introductory examples of our methods.

\begin{thm}
\label{thm:Q}
	Let  $\xis \in \Q[\bq][[x]]$ be the unique solution of
	\begin{align}
	\label{eq:Q_xis}
		\xis = x(1-\QQ(\xis))^{-2},
	\end{align}
	where $\QQ$ is defined by
        \begin{equation}
        \label{eq:Q}
          Q(z) = \sum_{k \geq 2} q_k z^{k-1}.
        \end{equation}
	Set
	\begin{align*}
		\QS(\xis) &= 1-\QQ(\xis)  \\
		\QD(\xis) &= 1-\QQ(\xis)-2 \xis \QQ'(\xis).
	\end{align*}
 Then we have
	\begin{align}
		\Dopx \icGS{1}
					&= \xis_1 \QS_1 \label{eq:Qm1} \\
		\Dopx \icGS{2}
					&= \xis_1\xis_2 \pr{\frac{\QS_1}{\QD_1}-\frac{\QS_2}{\QD_2}}  \frac{\QQ_1-\QQ_2}{(\xis_1-\xis_2)(\xis_1\QS_1 - \xis_2\QS_2) } 		
		\label{eq:Qm2}\\
		\icGS{3} 
			&= 2\xis_1\xis_2\xis_3\sum_{i =1}^3 \frac{1}{\QD_i} \prod_{j \neq i} 
		\frac{\QQ_i-\QQ_j}{(\xis_i-\xis_j)(\xis_i\QS_i-\xis_j\QS_j)} \label{eq:Qm3},
	\end{align}
	where 	$\xis_i = \xis(x_i,\bq)$, $\QQ_i = Q(\xis_i)$, $\QS_i = \QS(\xis_i)$  and $\QD_i=\QD(\xis_i)$. 
	\end{thm}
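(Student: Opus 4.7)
The plan is to translate the problem into the language of acyclic alternating planar maps via Theorem~\ref{thm:digraphs}, and then derive each of the three generating series through root decompositions in the spirit of Tutte. Since the maps are of genus $0$ with $m$ labelled faces, Euler's formula implies they have cyclomatic number $m-1$: for $m=1$ they are plane trees, for $m=2$ they contain exactly one independent cycle, and for $m=3$ they contain two independent cycles. The indeterminate $x_i$ marks sources in face $i$, with one source distinguished to serve as a root, while $q_k$ records internal vertices of degree $2k$; applying $\Dopx$ to $\icGS{m}$ has the effect of marking one additional source in each face, which is convenient for pointing-style decompositions.

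Case $m=1$. The relevant maps are alternating plane trees with a distinguished source-leaf as root. Let $\xis=\xis(x,\bq)$ be the generating series for such trees after the root edge has been removed (so the tree is planted at the root edge). Following the root edge reveals an internal vertex $v$ of some degree $2k$, and the remaining $2k-1$ edges at $v$ appear in strictly alternating in/out order around $v$; each of these edges is itself the root of a planted subtree. The arrangement splits into the two cyclic "halves" of edges around $v$ on either side of the root edge, yielding the functional equation $\xis = x(1-Q(\xis))^{-2}$, where $Q(\xis)$ selects $v$ together with its first outgoing edge and $(1-Q(\xis))^{-2}$ enumerates the two alternating sequences of subtrees hanging off $v$. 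Formula~\eqref{eq:Qm1} then follows by identifying $\Dopx\icGS{1}$, which marks an extra source beyond the root, with the series $\xis\QS(\xis)$.

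Case $m=2$. A map with two labelled faces has exactly one independent cycle, and the common boundary between the two faces is a simple cycle. The plan is to decompose such a map along this separating cycle: cutting yields two planar pieces, each essentially a tree-like structure attached to a prescribed cyclic arrangement of edges, so each piece is expressible in terms of $\xis$. Combining, and accounting for the shared cycle-vertices together with the two distinguished root sources (one in each face), produces~\eqref{eq:Qm2}. This case has appeared previously in equivalent forms in~\cite{BerHarNov_prep08,GouJacLat01,Irv_cjm09}; I would redo it within the present framework as a warm-up for the main new case.

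Case $m=3$. This is the main new contribution and the central obstacle. Maps with three labelled faces have cyclomatic number $2$, so two independent separating cycles, whose combinatorial arrangement is considerably more delicate. The symmetric form of~\eqref{eq:Qm3}, with its sum over $i=1,2,3$, strongly suggests a decomposition in which one of the three faces is distinguished as "central" (in some canonical sense, e.g.\ as the unique face bordering both cycles along a particular reference path), with the other two faces sitting on either side. My plan is to introduce such a notion of central face, decompose the map along the two cycles incident to it to produce a pair of $m=2$-type pieces joined along a shared spine, and then symmetrise over the choice of central face to recover the three-term sum. The hard part will be producing a decomposition clean enough to yield the compact closed form in~\eqref{eq:Qm3}: a naive case analysis would generate unwieldy sums of rational functions in $\xis_1,\xis_2,\xis_3$, so the main work lies in choosing the right canonical structure and in carrying out the final simplification, which I expect to reduce via the defining equation $\xis=x(1-Q(\xis))^{-2}$ and manipulations of $\QS,\QD$ to the stated form.
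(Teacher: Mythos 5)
Your proposal correctly identifies the broad strategy (translate to maps via Theorem~\ref{thm:digraphs}, then decompose), and your treatment of $m=1$ matches the paper's. But there are two substantive gaps.

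First, you never say how the decomposition will enforce the acyclicity of the directed map. In the paper this is exactly the crux: the authors pass to the undirected model (Theorem~\ref{thm:even_degree}), where acyclicity becomes condition~(e), that every cycle has a positive even number of odd corners, and they handle it by generating all maps with an extra marker $\delta$ for odd corners and then applying the algebraic filter $\Delta$ of~\eqref{eq:Delta} to each cycle. Without some mechanism of this kind, a root decomposition of alternating maps will also count maps with directed cycles (or, in the undirected picture, maps that cannot be consistently oriented), which are not in bijection with inequivalent factorizations. This is not a cosmetic point; the paper remarks that precisely this hidden condition is what blocked earlier attempts to obtain $\icGS{3}$.

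Second, the $m=3$ case, which is the genuinely new content of the theorem, is not actually carried out. Your proposal to find a canonical ``central face'' and decompose along two separating cycles is speculative, and it does not match the combinatorics: genus-$0$ three-face maps do not reduce to a single such configuration. Their backbone structures fall into exactly three non-isomorphic types (a theta graph, a loop-with-bubble, and a figure-eight; see Fig.~\ref{fig:structures3}), with different symmetry groups, and the paper sums the contributions $\partialGS{1}+\partialGS{2}+\partialGS{3}$ from all three, building each from vertex contributions $\nu_3,\nu_4$ and edge contributions $\edge{a}{b}$, and only then applies $\Delta_1\Delta_2\Delta_3$ (using that for three faces every cycle is a face boundary, which fails for $m\ge 4$). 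The resulting expression is then simplified to~\eqref{eq:Qm3} only with substantial computer-algebra assistance, and the authors explicitly say they cannot explain why the simplification occurs. Your confidence that a ``clean'' canonical decomposition would yield~\eqref{eq:Qm3} directly is therefore unsupported; you would need to either exhibit such a decomposition or fall back on an exhaustive backbone analysis like the paper's, together with the odd-corner filtration, and then actually perform the simplification.
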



The similarity between expressions~\eqref{eq:Qm1}--\eqref{eq:Qm3} is suggestive of a common form for $\icGS{m}$, valid for all $m$. Finding such a form remains a topic for future study, as our methods become impractical for $m \geq 4$ and the \emph{ad hoc} nature of our derivations in cases $m=1,2,3$ sheds little light on the matter.  We also note that although $\icGS{3}$ is obviously symmetric in the $\xis_i$, we have found no symmetric function expansion that is remotely as concise as the ``alternating'' form given above.  

The presence of $\Dopx$ in~\eqref{eq:Qm1} and~\eqref{eq:Qm2}, and its absence in~\eqref{eq:Qm3}, is not well understood combinatorially. However,  we shall see below that this situation closely  parallels known results regarding ordinary cycle factorizations.

Theorem~\ref{thm:Q} can be specialized to obtain generating series for
inequivalent $k$-cycle factorizations. The restriction to
$\ikGS{1}{k}$ and $\ikGS{2}{k}$ is routine, while the evaluation of
$\ikGS{3}{k}$  rests on technical lemmas which we have relegated to the appendix. The results are given in Section~\ref{sec:transpositions}. 

Of course, we can further restrict our attention to inequivalent
factorizations into transpositions.  For $m \leq 3$ we obtain simple
expressions for $\ikGS{m}{2}$ as special cases of $\ikGS{m}{k}$, see
Corollary~\ref{cor:transpositions}.  But we have also used a
specialization of Theorem~\ref{thm:digraphs} to obtain a compact form
for $\ikGS{4}{2}$, currently the only result available for $m=4$.

\begin{thm}
\label{thm:transpositions}
Let $\xis \in \Q[[x]]$ be the unique solution of $\xis = x(1-\xis)^{-2}$, namely
\begin{equation*}
	\xis(x) = \sum_{n \geq 1} \frac{1}{n} \binom{3n-2}{n-1} x^n.
\end{equation*}
Letting $\xis_i = \xis(x_i)$ and $e_{i} \equiv \esf{i}(\xis_1,\xis_2,\xis_3,\xis_4)$, we have
\begin{equation}
  \label{eq:4case_transposition}
   \begin{split}
    \ikGS{4}{2}
		&= 6(\Dopx+1) \sum_{i=1}^4 \frac{\xis_i}{1-3\xis_i} \prod_{j \neq i} 
			\frac{\xis_j}{(\xis_i-\xis_j)(1-\xis_i-\xis_j)}
			+
			\frac{12e_4(4-4e_1+3e_2)}{\prod_i (1-3\xis_i) \prod_{i < j} (1-\xis_i - \xis_j)}.
    \end{split}
\end{equation}
\end{thm}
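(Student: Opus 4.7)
My plan is to apply Theorem~\ref{thm:digraphs} specialised to transpositions ($k=2$, so every interior vertex has degree exactly $4$), minimal transitivity ($g=0$, meaning planar maps), and $m=4$. The problem reduces to enumerating planar acyclic alternating maps with four labelled faces, each with a prescribed number of sources and sinks, one source per face distinguished, and every interior vertex $4$-valent. The specialisations $\ikGS{m}{2}$ for $m\le 3$, obtained from Theorem~\ref{thm:Q} by setting $q_2=1$ and $q_k=0$ for $k\ge 3$, provide the base data.

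The main step is a root-face decomposition. For each map contributing to $\ikGS{4}{2}$, I would choose one of the four faces as root and trace its boundary starting at the distinguished source. On meeting the first interior vertex I would perform a surgery — either deleting an incident edge or contracting a short bridge between faces — and break the map into a planar alternating map on at most three labelled faces whose contribution is known. Summing over the four choices of root face should produce the symmetric sum $\sum_{i=1}^{4}$ appearing inside $\Dopx+1$. The shape of the summand mirrors formula~\eqref{eq:Qm3}: the factors $(\xis_i-\xis_j)(1-\xis_i-\xis_j)$ and $(1-3\xis_i)$ in the denominator should arise in precisely the same way as in the $m=3$ derivation, with the root face playing the distinguished role and $\Dopx+1$ accounting for the combinatorial choice of where the boundary walk terminates relative to the distinguished source.

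The second main step is to capture the configurations that no single root-face peeling reaches cleanly — the highly symmetric maps in which a small ``four-face core'' is incident to all four labelled faces simultaneously. Their contribution is what I expect to produce the second term
\[
\frac{12\,e_4(4-4e_1+3e_2)}{\prod_i (1-3\xis_i)\prod_{i<j}(1-\xis_i-\xis_j)},
\]
in which $e_4=\xis_1\xis_2\xis_3\xis_4$ reflects that each face contributes at least one source to the core and the polynomial $4-4e_1+3e_2$ encodes the few core shapes allowed by the acyclic-alternating condition.

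The hardest part will be pinning down the precise form of this correction. Combinatorially it demands a clean classification of the maps in which all four faces pinch together around a tiny core; algebraically one must verify, using the functional equation $\xis=x(1-\xis)^{-2}$ and the partial-fraction manipulations already exercised in the $m=3$ case, that the sum of the peeling contributions and the core contribution collapses to exactly~\eqref{eq:4case_transposition} rather than to an expression differing by a symmetric function of the $\xis_i$ of lower degree. As a sanity check I would directly enumerate small alternating maps via Theorem~\ref{thm:digraphs} and match the first few coefficients against those extracted from~\eqref{eq:4case_transposition}; together with the polynomiality of the numerator $4-4e_1+3e_2$ in the $\xis_i$, a finite number of matches would suffice to close the argument.
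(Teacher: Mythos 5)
Your scheme diverges from the paper's method, which works with the undirected model of Theorem~\ref{thm:even_degree}: it enumerates all planar backbone structures with four faces (Fig.~\ref{fig:structures4}), plants trees on their edges and vertices, and applies the filter $\Delta$ to enforce condition~(e). The most serious gap in your proposal is that you never address the difficulty that distinguishes $m=4$ from $m\le 3$: for maps with four or more faces, condition~(e) must be enforced on cycles that are \emph{not} face boundaries (Fig.~\ref{fig:additional_cycles} exhibits three such cycles for a single backbone). A root-face peel that only tracks boundary walks of individual faces and their distinguished sources cannot see these constraints, so any count it produces would be wrong.

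Beyond that, the decomposition is not concrete: the proposed surgery is never shown to be well defined or bijective, and the predicted shape of the summand is taken by analogy with~\eqref{eq:Qm3}, which carries no $\Dopx$ operator, so the analogy does not explain the factor $\Dopx+1$. The identification of the second term with a ``four-face core'' family is likewise guesswork with no classification offered. Finally, the coefficient-matching fallback is a sanity check rather than a proof unless you also establish the a priori degree bound you invoke (``polynomiality of the numerator''); as written that bound is asserted, not shown. The paper itself is candid that the closed form was found only by conjecturing the answer and guiding Maple through the backbone-structure computation, so a genuinely combinatorial peeling derivation would be a real advance, but what you have sketched does not yet reach it.
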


The derivation of equation \eqref{eq:4case_transposition} is given in Section~\ref{sec:4case_transposition}.
As with $\icGS{m}$, we have not been able to deduce a universal form of $\ikGS{m}{2}$ valid for all $m$.

\subsection{Minimal Transitive Cycle Factorizations}
\label{sec:survey_cyclefacts}

More important than the enumerative content of Theorem~\ref{thm:Q} is its striking similarity with analogous results for ordinary cycle factorizations.

Let $\ocfacts{\a}{\b}$ be the number of minimal transitive cycle factorizations with signature $\b$ of any permutation with cycle type $\a$. In accordance with~\eqref{eq:icGS}, set
\begin{align}
\label{eq:ocGS}
	 \ocGS{m}(\bx,\bq) =
	 		\sum_{\a, \b} \ocfacts{\a}{\b} \frac{1}{|\b|!} \frac{x_1^{\a_1} \cdots x_m^{\a_m}}{\a_1 \cdots \a_m} \bq^{\b}
\end{align}
for $m \geq 1$. 

Closed form expressions for $\ocGS{m}$ are known only
when $m=1$ or $m=2$.  The case $m=1$, which corresponds to
factorizations of the full cycle, is well understood both
bijectively~\cite{Spr96} and algebraically
while the case $m = 2$ was treated in~\cite{Irv_cjm09} using a
graphical decomposition for cycle factorizations.  It transpires that
both series can be neatly expressed in terms of the unique solution $w
\in \Q[\bq][[x]]$ of
\begin{equation}
\label{eq:w_recur}
	w = xe^{\QQ(w)},
\end{equation}
where $Q$ is defined as in~\eqref{eq:Q}.  Letting $T(w)=1-wQ'(w)$,  the results of~\cite{Irv_cjm09} can be rewritten as
\begin{align}
		\Dopx \ocGS{1} &= w_1, 
				\label{eq:ordered_m1}
				\\
		\Dopx \ocGS{2}
			&= w_1 w_2 \pr{\frac{1}{T_1}-\frac{1}{T_2}}  
					\frac{\QQ_1-\QQ_2}{(w_1-w_2)^2}, 		
				\label{eq:ordered_m2}
	\end{align}
where $w_i = w(x_i,\bq)$, $\QQ_i = Q(w_i)$,  and $T_i=T(w_i)$ for $i=1,2$.  
We note in passing that the explicit formula~\eqref{eq:fullcyclefacts} for $\ocfacts{(n)}{\b}$ is readily derived  from~\eqref{eq:w_recur} and~\eqref{eq:ordered_m1} by Lagrange inversion.


  We now invite the reader to compare~\eqref{eq:Qm1} and~\eqref{eq:Qm2} with~\eqref{eq:ordered_m1} and~\eqref{eq:ordered_m2}. The connection is strong enough that we have used~\eqref{eq:Qm3} to model the following conjectural form for $\ocGS{3}$.

\begin{conjecture}
\label{conj:ordered3case}
With the same notation as in~\eqref{eq:w_recur}---\eqref{eq:ordered_m2}, we have
$$		
		\ocGS{3} = w_1 w_2 w_3 \sum_{i =1}^3 \frac{1}{T_i} \prod_{j \neq i} 
		\frac{\QQ_i-\QQ_j}{(w_i-w_j)^2}. 
$$
\end{conjecture}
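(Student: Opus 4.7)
The plan is to mirror the derivation of the $\icGS{3}$ formula (equation~\eqref{eq:Qm3}) in Theorem~\ref{thm:Q}, but working directly with ordered cycle factorizations rather than passing through the commutation quotient. The first step is to establish a map-theoretic model for minimal transitive cycle factorizations with three faces, analogous to Theorem~\ref{thm:digraphs}. This should refine the map model of~\cite{Irv_cjm09} used to obtain equations~\eqref{eq:ordered_m1} and~\eqref{eq:ordered_m2}: the underlying combinatorial object is again a genus~$0$ alternating map with three labelled faces, but each vertex additionally carries a total order datum, and the factor $1/|\b|!$ in the definition~\eqref{eq:ocGS} of $\ocGS{3}$ reflects the passage from sequences to multisets of factors.

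Next, the enumeration would proceed by decomposing such maps along the \emph{core} that separates the three distinct faces. In genus~$0$ the core must reduce (after contracting tree-like attachments in each face) to one of a small number of topological shapes — essentially a theta-graph or a tripod with a single internal meeting vertex. Tree-like attachments hanging into face~$i$ contribute factors captured by $w_i$, since $w = xe^{Q(w)}$ is by construction the generating series for rooted trees with $Q$-weighted internal vertices; the product $w_1 w_2 w_3$ then accounts for the distinguished source in each face. Each separator running between faces~$i$ and~$j$ should contribute a factor of $(\QQ_i-\QQ_j)/(w_i-w_j)^2$, a ``strip'' enumeration already implicit in the $m=2$ formula~\eqref{eq:ordered_m2}. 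The summation over $i$ with weight $1/T_i$ should correspond to the choice of face containing the distinguished meeting vertex, with $1/T_i = 1/(1-w_i Q'(w_i))$ capturing the ``degenerate'' contribution of that vertex — much as $1/P_i$ plays this role in~\eqref{eq:Qm3}.

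The main obstacle, as with the derivation of~\eqref{eq:Qm3}, is the ad hoc case analysis needed to reconcile competing topological configurations and the corresponding algebraic cancellation. Specifically, the right-hand side of the conjecture is manifestly symmetric in $(w_1,w_2,w_3)$ even though the decomposition naturally singles out one distinguished face at a time, so one must verify that the sum telescopes across the theta and tripod cases to produce exactly the three-term symmetric expression. A related obstacle is accounting correctly for the $1/|\b|!$ normalization: the graphical decomposition naturally yields linear extensions of a partial order on the factors, and one must confirm that these combine cleanly with the $w$-tree enumeration to reproduce the unlabelled generating series.

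An alternative route worth attempting is via the Frobenius character formula expansion of $\ocfacts{\a}{\b}$ in Schur functions, combined with Lagrange inversion applied to~\eqref{eq:w_recur}; the $m=1,2$ results are recoverable in this way, but translating the representation-theoretic sum into the compact form of the conjecture for $m=3$ would itself require finding a suitable algebraic identity, and the parallel with~\eqref{eq:Qm3} suggests that the combinatorial/graphical route is likely to be more transparent.
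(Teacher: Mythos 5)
The statement you are addressing is explicitly labelled a \emph{conjecture} in the paper; the authors give no proof of it. They remark only that they have tested it against data and are ``certain it can be proved by generalizing the graphical constructions in~\cite{Irv_cjm09}, but we feel that the insight gained by such a proof is unlikely to be worth working through the technical details to obtain it.'' There is therefore no paper proof to compare your attempt against, and anything presented as a proof of this statement would itself be new.

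Your proposal does not close that gap: it is a restatement of the authors' own sentiment rather than a proof. You sketch a map-theoretic decomposition along the lines of Section~\ref{sec:factor_m3} --- backbone structures, $w$-weighted tree plantings, filtering --- and then flag as ``the main obstacle'' exactly the reconciliation of the competing topological cases and the verification that the sum telescopes to the symmetric three-term form. Flagging that obstacle is not the same as overcoming it; the entire difficulty of the problem lives there. Your outline gives no actual argument for how the $1/T_i$ factors emerge from the decomposition (you say they ``should correspond'' to a distinguished meeting vertex, which is an analogy with~\eqref{eq:Qm3}, not a derivation), nor for how the $(\QQ_i-\QQ_j)/(w_i-w_j)^2$ strip factors arise in the ordered setting, nor how the $1/|\b|!$ normalization is absorbed --- each of these is left as something ``one must confirm.'' Moreover, the model you invoke is for \emph{inequivalent} factorizations, which quotient by commutation; ordered cycle factorizations require the dual model of~\cite{Irv_cjm09}, and passing from ``acyclic alternating map'' to ``map plus a linear extension of a partial order on vertices'' is a nontrivial change that must be handled carefully before any of the backbone enumeration can begin. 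As it stands this is a plausible research plan, consistent with the authors' suggested route, but not a proof.
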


We have tested this conjecture against sufficient data to be confident in its truth.
In fact, we are certain it can be proved by generalizing the graphical constructions in
~\cite{Irv_cjm09}, but we feel that the insight gained by such a proof is unlikely to be worth working through the technical details to obtain it. Ultimately, new methods will be required to shed further light on the nature of this connection between inequivalent and ordered factorizations.

Let $\okGS{m}{k}$ be defined analogously to $\ikGS{m}{k}$. Then it is straightforward to verify that Hurwitz's formula~\eqref{eq:hurwitz} is equivalent to 
\begin{equation}
\label{eq:hurwitz2}
 	\okGS{m}{2}(\bx) = \Dopx^{m-3} \prod_{i=1}^m \frac{w_i}{1-w_i}, \qquad m \geq 1.
\end{equation}
For arbitrary $k$, closed form expressions for $\okGS{m}{k}$ are known only when $m \leq 3$.  These were found by Goulden and Jackson~\cite{GouJac_ejc00}, and Conjecture~\ref{conj:ordered3case} does indeed specialize  to their results. (Verifying this fact in the case $m=3$ is best done using Lemmas~\ref{lem:3case_simplification} and~\ref{lem:det} of the Appendix.)

Note that the passage from~\eqref{eq:Qm3} to
Conjecture~\ref{conj:ordered3case} is essentially effected by setting
$\QS \equiv 1$ and replacing $\xis$ with $w$ and $\QD(\xis)$ with
$T(w)$. This same correspondence appears upon
implicit differentiation of the defining equations~\eqref{eq:Q_xis}
and~\eqref{eq:w_recur}, which yields
\begin{equation}
  \label{eq:xdx}
		x \pd{w}{x} = \frac{w}{T}.
		\qquad\text{and}\qquad
		x \pd{\xis}{x} = \frac{\xis \QS}{\QD}.
\end{equation}
These relations indicate that $\QS$ and $\QD$ are not as arbitrary as they may at first appear.   

\begin{remark}
  In comparing~\eqref{eq:Qm3} to Conjecture~\ref{conj:ordered3case},
  the reader will observe an extra factor of 2 that is not explained
  by the substitutions described above.  The second author and
  G. Chapuy are currently working on a unified framework for
  inequivalent and ordered cycle factorizations which would explain this
  factor and many other aspects of the connection observed above.
  These results will be reported elsewhere.
\end{remark}

\section{Relationships with Other Classes of Factorizations}
\label{sec:connections}

\newcommand{\mono}[1][n]{\mathcal{M}_{#1}}
\newcommand{\ineq}[1][n]{\mathcal{I}_{#1}}
\newcommand{\proper}[1][n]{\mathcal{P}_{#1}}
\newcommand{\cycfacts}[1][n]{\mathcal{C}_{#1}^*}
\newcommand{\cycles}[1][n]{\mathcal{C}_{n}}
\newcommand{\words}{\mathcal{W}_n}
\newcommand{\eval}[1]{\Pi(#1)}
\newcommand{\mx}[1]{\max(#1)}
\newcommand{\mxf}[1][f]{\hat{#1}}
\newcommand{\tr}[1]{\mathrm{tr}(#1)}
\newcommand{\discyc}{\overline{\mathcal{I}}_n}
\newcommand{\Proj}{P_{\mathcal{I}}}

Let $\ineq$, $\proper$ and $\mono$, respectively, be the sets of all inequivalent, proper, and monotone factorizations in $\Sym{n}$.  (See Section~\ref{sec:survey_connections} for the relevant definitions.)  In this section we shall develop connections between these sets.  In doing so, it will be convenient to view them as monoids under concatenation.  Thus we consider each of them to contain the \emph{empty factorization}, denoted by 1, which is a factorization of the identity permutation $1 \in \Sym{n}$.  

Let $f=(\s_1,\ldots,\s_r)$ be any factorization. Let $\target{f}$ denote the target permutation $\s_1\s_2\cdots\s_r$, and extend this definition so that $\Pi$ 
acts linearly on formal sums of factorizations.   Note that $\target{f}$ is well defined for inequivalent factorizations $f$.      Let $\b(f)$ be the
signature of $f$, with $\b(1)=\mathbf{0}$. Similarly define the signature
$\b(\s)$ or any permutation $\s$.  Clearly 
$\b(fg)=\b(f)+\b(g)$, where $fg$ is the concatenation of
factorizations $f$ and $g$.

\subsection{Inequivalent and Proper Factorizations}

Let $\cycles$ be the set of all nontrivial cycles in $\Sym{n}$, and let $\cycfacts$  be the set of all words on this alphabet.  Every permutation in $\Sym{n}$ can be viewed as an element of $\cycfacts$ by listing its nontrivial disjoint cycles in increasing order of least element, with  the identity
permutation corresponding with the empty word.
Every proper factorization $f \in \proper$ is also then associated with an element of $\cycfacts$ by concatenating the words  of its factors. 
In particular, this restricts to a natural one-one correspondence between $\cycfacts$ and the set of all cycle factorizations in $\Sym{n}$ (including the empty factorization). 

\newcommand{\proj}[1]{\left[ #1 \right]}
On $\cycfacts$ we have the equivalence relation $\sim$ induced by
allowing commutations of adjacent disjoint cycles, and the quotient $\cycfacts/\sim$ is naturally identified with $\ineq$.  Let $f \mapsto \proj{f}$ be the canonical projection of $\cycfacts$ onto $\ineq$, extended linearly to all of $\Q\cycfacts$.
For example,
\begin{equation*}
	\big[2(1\,2)(3\,4) + (3\,4)(1\,2)\big] = 3\big[(1\,2)(3\,4)\big].
\end{equation*}
In the following proof, elements of $\Sym{n}$, $\proper$ and $\ineq$ should be viewed as words on $\cycfacts$ (and their projections).

\begin{thm}
\label{thm:propercorrespondence}
In  $\Q\Sym{n}[[\bq]]$ we have
\begin{equation}
\label{eq:qidentity}
	\Big( \sum_{\s \in \Sym{n}} \bq^{\b(\s)} \s \Big)^{-1}
	=
	\sum_{f \in \proper} \target{f} (-1)^{\len{f}} \bq^{\b(f)}
	=
	\sum_{f \in \ineq} \target{f} (-1)^{\len{f}} \bq^{\b(f)}.
\end{equation}
Moreover, the rightmost identity continues to hold if $\proper$ and $\ineq$ are restricted to contain only transitive factorizations of any fixed genus.
\end{thm}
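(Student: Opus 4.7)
My plan is to prove the two equalities in turn. The first, identifying $\bigl(\sum_{\s}\bq^{\b(\s)}\s\bigr)^{-1}$ with the series over $\proper$, is immediate from a geometric-series argument: each $f=(\s_1,\ldots,\s_r)\in\proper$ contributes $\prod_i\bigl(-\s_i\,\bq^{\b(\s_i)}\bigr)$, so summing over $r\geq 0$ yields $(1+B)^{-1}$ where $B=\sum_{\s\neq 1}\s\,\bq^{\b(\s)}$. Since $\b(1)=\mathbf{0}$, we have $1+B=\sum_{\s\in\Sym{n}}\s\,\bq^{\b(\s)}$, and the first identity follows.

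For the second equality I would apply the Cartier--Foata inversion formula to the trace monoid $\ineq$, generated by the nontrivial cycles of $\Sym{n}$ with commutation relation given by disjointness. Weighting each generator $c$ by $w(c)=-c\,\bq^{\b(c)}\in\Q\Sym{n}[[\bq]]$ is commutation-compatible, because disjoint cycles commute in $\Sym{n}$. Cartier--Foata then gives
\[
\Bigl(\sum_{[f]\in\ineq}(-1)^{\len{f}}\target{f}\,\bq^{\b(f)}\Bigr)\cdot\Bigl(\sum_{C\text{ clique}}\prod_{c\in C}c\,\bq^{\b(c)}\Bigr)=1,
\]
where $C$ ranges over cliques of pairwise commuting (hence pairwise disjoint) generators. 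These cliques are in bijection with $\Sym{n}$ via $C\leftrightarrow\prod_{c\in C}c$ (the empty clique giving the identity), so the second factor is $\sum_{\s\in\Sym{n}}\s\,\bq^{\b(\s)}$, establishing the second identity.

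For the restricted statement I would introduce the refinement map $\rho:\proper\to\ineq$ that replaces each proper factor $\s_i$ by the concatenation of its disjoint cycles. Because $\s_i$ and its cycles generate the same subgroup of $\Sym{n}$ and contribute equally to depth, $\rho$ preserves target, signature, transitivity, and depth, hence also genus, by~\eqref{eq:rank}. The fibres of $\rho$ therefore respect the transitive-genus-$g$ subset of $\proper$, and after grouping the second identity by $\rho(g)=[f]$ the restricted version reduces to the fibre-level identity $\sum_{g\in\rho^{-1}([f])}(-1)^{\len{g}}=(-1)^{\len{f}}$ for every $[f]\in\ineq$. Viewing $\rho^{-1}([f])$ as the set of order-compatible antichain partitions $(A_1,\ldots,A_r)$ of the Cartier--Foata heap poset $P$ of $[f]$ (with $\s_i=\prod_{c\in A_i}c$), this reduces in turn to the poset-theoretic statement $\sum_r(-1)^r N_r^{\mathrm{ant}}(P)=(-1)^{|P|}$, where $N_r^{\mathrm{ant}}(P)$ counts such partitions into $r$ blocks. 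I would prove this by induction on $|P|$: deleting a maximal $m\in P$, each order-compatible antichain partition of $P\setminus\{m\}$ extends to $P$ either by inserting $\{m\}$ as a new singleton block at an allowed position, or by joining $m$ to a compatible existing block; after tracking the number of allowed positions, the generating function $f_P(x)=\sum_r N_r^{\mathrm{ant}}(P)\,x^r$ satisfies a recursion that collapses at $x=-1$ to $f_P(-1)=-f_{P\setminus\{m\}}(-1)$, completing the induction. This fibre-level identity is the one genuine obstacle; the Cartier--Foata and geometric-series steps are essentially formal.
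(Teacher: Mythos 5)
Your argument is correct, but it takes a genuinely different route from the paper's, and the contrast is worth understanding. The paper applies Cartier--Foata inversion \emph{before} evaluating the target map $\Pi$: it works in $\Q\ineq[n][[\bq]]$, where the resulting identity holds term-by-term in the inequivalent factorizations, so it records, for every $[f]\in\ineq[n]$, that $\sum_{g\in\rho^{-1}([f])}(-1)^{\len{g}}=(-1)^{\len{f}}$ (in your notation, with $\rho$ the refinement map). That fibre-level refinement makes the restriction to transitive factorizations of fixed genus a one-line observation, since $\rho$ preserves transitivity and signature. You instead apply $\Pi$ first and run Cartier--Foata directly in $\Q\Sym{n}[[\bq]]$; this proves the unrestricted identity cleanly but collapses the fibre information, so for the restricted statement you are forced to re-derive the fibre identity by hand. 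Your reduction and inductive proof are sound: the fibres of $\rho$ are exactly the ordered partitions of the heap poset $P$ of $[f]$ into nonempty antichains respecting the order (equivalently, surjective strictly order-preserving maps $P\to\{1,\ldots,r\}$), and removing a maximal element $m$ gives each partition of $P\setminus\{m\}$ exactly one more singleton-insertion extension than block-joining extension, so the two contributions cancel up to a single sign at $x=-1$, yielding $f_P(-1)=-f_{P\setminus\{m\}}(-1)$. In fact the identity you prove is precisely Stanley's order-polynomial reciprocity evaluated at $1$: the strict order polynomial of $P$ at $-1$ equals $(-1)^{|P|}$ times the (weak) order polynomial at $1$, which is $(-1)^{|P|}$. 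The paper's ordering of the steps gets this for free as part of Cartier--Foata in the trace monoid; yours surfaces it as an explicit poset lemma. Both are valid; the paper's is shorter and makes the transitivity/genus restriction trivial, while yours makes the hidden poset-theoretic content visible.
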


\begin{proof}
Let $\discyc$ be the subset of $\ineq$ consisting
of all nonempty words on $\cycles$ whose letters commute
pairwise.  Then the Cartier-Foata theorem~\cite{cartierfoata} yields the following identity in $\Q\ineq[n][[\bq]]$:
\begin{equation*}
  \Big( 1 + \sum_{f \in \discyc} (-1)^{\len{f}} \bq^{\b(f)} f\Big)^{-1}
  =
  \sum_{f \in \ineq} \bq^{\b(f)} f.
\end{equation*}
Note that $(-1)^{\len{f}} \bq^{\b(f)}=(-\bq)^{\b(f)}$ for  $f \in \discyc$. Moreover, each $f \in \discyc$ 
corresponds with a distinct $\s \in \Sym{n}$ of the same signature.   Thus 
\begin{equation}
  \label{eq:cartierfoata}
	\Bigg[\Big( \sum_{\s \in \Sym{n}} (-\bq)^{\b(\s)} \s \Big)^{-1}\Bigg]
	=
	\sum_{f \in \ineq}  \bq^{\b(f)} f.
\end{equation}
On the other hand, expansion in $\Q\cycfacts[n][[\bq]]$  gives
\begin{equation}
\label{eq:fq}
	\Big( \sum_{\s \in \Sym{n}} \bq^{\b(\s)} \s \Big)^{-1}
= \sum_{k \geq 0} (-1)^k \Big( \sum_{\s \neq 1} \bq^{\b(\s)}\s\Big)^{\!\! k}
  = \sum_{f \in \proper} (-1)^{\len{f}} \bq^{\b(f)} f.
\end{equation}
Together, \eqref{eq:cartierfoata} and~\eqref{eq:fq} yield the $\Q\ineq[n][[\bq]]$ identity
\begin{align}
	\Bigg[	\Big( \sum_{\s \in \Sym{n}} \bq^{\b(\s)} \s \Big)^{-1} \Bigg]
	=
  \Bigg[ \sum_{f \in \proper} (-1)^{\len{f}} \bq^{\b(f)} f \Bigg]
  &=
  \sum_{f \in \ineq} (-1)^{\len{f}} \bq^{\b(f)} f.
  \label{eq:fq3}
\end{align}
Now the image of~\eqref{eq:fq3} under $\Pi$ is precisely~\eqref{eq:qidentity}, since clearly $\target{[f]} = [\target{f}]$. Also notice
that the rightmost identity in~\eqref{eq:fq3} continues to hold
when we restrict $\proper$ and $\ineq$ to include only transitive
factorizations, since a proper factorization is transitive if and only
if its induced cycle factorization is transitive. The same is therefore true of~\eqref{eq:qidentity}.  We can further
restrict to any particular genus simply by discarding all terms with
inappropriate signatures.
\end{proof}

\begin{example}
Consider the word $(1\,5)(2\,4)(3\,5) \in \cycfacts[5]$. The corresponding inequivalent factorization (\emph{i.e.} equivalence class) in $\ineq[5]$ is
$$
	f = \{ (1\,5)\cdot(2\,4)\cdot(3\,5),\quad
			 (2\,4) \cdot (1\,5) \cdot (3\,5),\quad
			 (1\,5) \cdot (3\,5) \cdot (2\,4)\},
$$  
There are precisely five factorizations in $\proper[5]$ which project to $f$, namely
  \begin{xalignat*}{2}
    &(1\,5)\cdot(2\,4)\cdot(3\,5) &     &(1\,5)(2\,4)\cdot(3\,5) \\
    &(2\,4)\cdot(1\,5)\cdot(3\,5) &     &(1\,5)\cdot(2\,4)(3\,5) \\
    &(1\,5)\cdot(3\,5)\cdot(2\,4) & &
  \end{xalignat*}
Each factorization in the first column contributes $(-1)^3 q_2^3\, f$ to the middle sum of~\eqref{eq:fq3}, while those in the second contribute $(-1)^2 q_2^3\,f$. The total contribution is therefore $-q_2^3\,f$, which matches the contribution $(-1)^3 q_2^3\,f$ that $f$ makes to the right-hand sum of~\eqref{eq:fq3}. \qed
\end{example}

We note that it is straightforward to ``combinatorialize'' the proof of Theorem~\ref{thm:propercorrespondence}, but in doing so one is effectively recreating the proof of the Cartier-Foata identity.

\subsection{Inequivalent and Monotone Factorizations}

We now establish a connection between $\ineq$ and $\mono$.  We give two proofs of this result, one combinatorial and one algebraic, as we believe both provide insight into the underlying structure. Our combinatorial proof is an adaptation of a similar proof for semiclassical diagrams appearing in \cite{BerKui_pre12,BerKui_jmp13a}, where it is described using modifications of maps.  

\begin{thm}
\label{thm:primcorrespondence}
In $\Q\Sym{n}[[u]]$ we have
\begin{equation}
\label{eq:primidentity}
	\sum_{f \in \mono} \target{f} u^{\len{f}}
	=
	\sum_{f \in \ineq} \target{f} (-1)^{\len{f}} (-u)^{\depth{f}}.
\end{equation}
This identity continues to hold if $\mono$ and $\ineq$ are replaced with the subsets thereof
consisting of transitive factorizations of any fixed genus.
\end{thm}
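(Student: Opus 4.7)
The plan is an algebraic proof via Jucys--Murphy elements, with a multiplicativity-over-orbits argument to handle the restriction to transitive factorizations of fixed genus.

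First I would recast the left-hand side in terms of the Jucys--Murphy elements $J_k = \sum_{i<k}(i\,k) \in \Q\Sym{n}$ (with $J_1=0$), which are well known to commute pairwise. By the definition of a monotone factorization, selecting a factor $(a\,b)$ with $a<b$ is the same as choosing a summand of $J_b$, and the constraint $b_1 \le b_2 \le \cdots \le b_r$ translates directly into the generating function
\begin{equation*}
\sum_{f \in \mono}\target{f}\,u^{\len{f}} \;=\; \sum_{r \ge 0} u^r\, h_r(J_1,\ldots,J_n) \;=\; \prod_{k=1}^n (1-uJ_k)^{-1},
\end{equation*}
where the second equality is the standard formula for the generating function of complete homogeneous symmetric polynomials in commuting variables.

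Next I would evaluate the right-hand side by specialising Theorem~\ref{thm:propercorrespondence}. Since $\sum_{k \ge 2}(k-1)\b_k(\tau) = n - \len{\tau}$ for every $\tau \in \Sym{n}$ and every $f \in \ineq$, the substitution $q_k = (-u)^{k-1}$ sends $\bq^{\b(\tau)}$ to $(-u)^{n-\len{\tau}}$ and transforms the theorem into
\begin{equation*}
\sum_{f \in \ineq}\target{f}\,(-1)^{\len{f}}(-u)^{\depth{f}} \;=\; \Big(\sum_{\sigma \in \Sym{n}}(-u)^{n-\len{\sigma}}\sigma\Big)^{-1}.
\end{equation*}
The classical Jucys--Murphy identity $\prod_{k=1}^n(1+xJ_k) = \sum_{\sigma \in \Sym{n}} x^{n-\len{\sigma}}\sigma$ (a routine induction on $n$ using that right-multiplication by $(i\,n)$ merges the fixed point $\{n\}$ with the cycle of $i$ in a permutation fixing $n$) evaluated at $x=-u$ identifies the bracketed sum as $\prod_k(1-uJ_k)$. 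Hence the right-hand side equals $\prod_k(1-uJ_k)^{-1}$, matching the left-hand side and establishing \eqref{eq:primidentity}.

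For the restriction to transitive factorizations of any fixed genus, I would argue that both sides of \eqref{eq:primidentity}, viewed as $\Q\Sym{n}[[u]]$-valued invariants of $\p$, are multiplicative over orbit decompositions. A monotone factorization of $\p$ restricts uniquely to transitive monotone factorizations on each orbit of the group generated by its factors, since the sets of $b$-values attached to distinct orbits are disjoint and thus interleave into the unique global monotone order. Similarly, an inequivalent cycle factorization decomposes canonically as a tuple of transitive inequivalent factorizations on each orbit because cycles supported on disjoint orbits commute and therefore constitute a single equivalence class. Since $\len{f}$ and $\depth{f}$ are additive across orbit pieces, both generating functions satisfy the same exponential-formula relation with respect to refinements of the cycle partition of $\p$, and M\"obius inversion on the partition lattice extracts the transitive version of \eqref{eq:primidentity}. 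Finally, once restricted to the transitive sector with fixed cycle type, \eqref{eq:rank} identifies depth with genus, so separating the identity by the power of $u$ separates it by genus.

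The most delicate step is the multiplicativity argument on the inequivalent side, where one must verify that the statistics $(-1)^{\len{f}}$ and $(-u)^{\depth{f}}$ and the identification of equivalence classes all behave correctly under the tuple decomposition; this reduces to an additivity bookkeeping once one checks that inequivalent factorizations really are in bijection with tuples of transitive ones on the orbit blocks. The combinatorial proof promised in the statement would presumably proceed by a sign-reversing involution on the alternating maps of Theorem~\ref{thm:digraphs}, in the spirit of the modifications of maps used by Berkolaiko--Kuijlaars, providing an independent derivation that avoids Jucys--Murphy theory entirely.
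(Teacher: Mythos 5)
Your proof of the main identity \eqref{eq:primidentity} is essentially the paper's algebraic proof: expressing the monotone generating function as $\prod_k(1-uJ_k)^{-1}$ via the Jucys--Murphy elements, invoking the classical fact that the elementary symmetric polynomials in the $J_k$ sort $\Sym{n}$ by number of cycles, and then specializing Theorem~\ref{thm:propercorrespondence} at $q_k=(-u)^{k-1}$. The paper phrases the JM identity via $e_k(J_2,\ldots,J_n)$ rather than $\prod(1+xJ_k)$, but these are the same fact.

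Where you diverge from the paper is in the restriction to transitive factorizations of fixed genus. The paper disposes of this in two terse observations (rearrangement preserves transitivity, and a cycle factorization is transitive iff its minimal transposition refinement is transitive), essentially arguing that the matching on which the identity rests respects the orbit structure. You instead run a multiplicativity-over-orbits argument: both sides factor over the blocks of the orbit partition --- on the monotone side because the $b$-values of distinct orbits are disjoint and force a unique interleaving, on the inequivalent side because cycles supported on disjoint blocks commute --- and then an induction on $n$ (which you label M\"obius inversion on the partition lattice; the two are equivalent here) extracts the transitive sector. The subsequent extraction of a fixed genus via \eqref{eq:rank} is the same as the paper's. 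Your version of the transitivity step is spelled out in more detail than the paper's and is a valid alternative; both arguments reach the same conclusion, so the overall proof is correct, with the same core mechanism and a different (and somewhat more explicit) handling of the transitivity refinement.
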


\begin{proof}[Combinatorial Proof:]
Define the \emph{trace} of a cycle factorization $f = (\s_1,\cdots,\s_r)$ by 
\begin{equation*}
  \tr{f} := (\mx{\s_1},\ldots,\mx{\s_r}) \in [n]^r,  
\end{equation*}
where $\mx{\s_i}$ is the largest element of cycle $\s_i$.
Then every inequivalent factorization (\emph{i.e.} equivalence class) has a unique representative $f$ whose trace is minimal in the usual lexicographic order.  We identify $\ineq$ with the set of these canonical forms and define $\map{I}{\ineq}{\ineq}$ as follows. 

If  $f$ is a monotone factorization, set $I(f)=f$.  Now suppose $f=(\s_1,\ldots,\s_r)$ is not monotone. Let $\s_i$ be the leftmost factor such that $\mx{\s_i} > \mx{\s_{i+1}}$, if it exists, and otherwise let $\s_i$ be the leftmost factor that is \emph{not} a transposition.  Let $m = \mx{\s_i}$.  There are two cases to consider:
\begin{enumerate}
\item If $\s_i = (a\, m)$, then the minimality of $\tr{f}$ and the
  condition $\mx{\s_i} > \mx{\s_{i+1}}$ imply that $\s_i$ and
  $\s_{i+1}$ do not commute and thus force
  $\s_{i+1}=(a\,b_1\,\cdots\,b_k)$ for some $b_1,\ldots,b_k < m$. We
  modify $f$ by multiplying the factors $\s_i$ and $\s_{i+1}$
  together, i.e. define
$$
	I(f)=\s_1\cdots\s_{i-1}(a\,b_1\cdots b_k\,m)\s_{i+2}\cdots\s_r.
$$

\item	 If $\s_i = (a\,b_1\cdots b_k\,m)$, then we define
$$
I(f)=\s_1\cdots\s_{i-1}(a \,m)(a\,b_1\cdots b_k)\s_{i+1}\cdots\s_r.
$$
\end{enumerate}
It is easy to check that $\tr{I(f)}$ is minimal and $I(I(f))=f$. Thus $I$ is an involution on $\ineq$, and its fixed points are the monotone factorizations.  Clearly $\target{I(f)}=\target{f}$ and $\depth{I(f)}=\depth{f}$. When $I(f) \neq f$ we have $\len{I(f)}=\len{f}\pm 1$, so that
  \begin{equation*}
    \target{I(f)} (-1)^{\len{I(f)}} (-u)^{\depth{I(f)}} = -\target{f}
    (-1)^{\len{f}} (-u)^{\depth{f}}.
  \end{equation*}
All factorizations that are not fixed points of $I$ therefore cancel each other in the right-hand sum of \eqref{eq:primidentity}, which proves its equality to the left-hand sum.  Moreover,  transitivity and genus are clearly preserved by $I$, so~\eqref{eq:primidentity} still holds when the sums are restricted by these conditions.  \end{proof}

\begin{proof}[Algebraic Proof]
Let $J_2,\ldots,J_n$ be the Jucys-Murphy elements in $\Q\Sym{n}$, defined by $J_k = \sum_{i=1}^{k-1} (i\,k)$.  Then  in $\Q\Sym{n}[[u]]$ we have
\begin{align*}
	\sum_{f \in \mono} \target{f} u^{\len{f}}
	= \sum_{i_j \geq 0} u^{i_2+\cdots+i_n} J_2^{i_2} J_3^{i_3} \cdots J_n^{i_n}  
	= \prod_{i=2}^n (1-uJ_i)^{-1}.
\end{align*}
Since the $J_i$ commute it follows that
\begin{align*}
	\sum_{f \in \mono} \target{f} u^{\len{f}}
	= \Big( \prod_{i=2}^n (1-uJ_i) \Big)^{-1} 
	= \Big(1 + \sum_{k =1}^{n-1} (-u)^k e_k(J_2,\ldots,J_n)\Big)^{-1}.
\end{align*}
But it is well known that $e_{k}(J_2,\ldots,J_n)$ 
evaluates to the sum of all permutations
in $\Sym{n}$ composed of $n-k$ cycles; 
in other words, all $\s \in \Sym{n}$ with $\depth{\s}=k$. Thus
\begin{equation*}
	\sum_{f \in \mono} \target{f} u^{\len{f}} = \Big(\sum_{\s
          \in \Sym{n}} (-u)^{\depth{\s}} \s \Big)^{-1}.
\end{equation*}
Equation~\eqref{eq:primidentity} now follows from~\eqref{eq:qidentity} upon setting $q_k = (-u)^{k-1}$.

That~\eqref{eq:primidentity} continues to hold when restricted to transitive subsets of factorizations is a consequence of two elementary observations.  First, rearranging the factors of a factorization preserves transitivity. 
Second, if $f=(\s_1,\ldots,\s_r)$ is any factorization in $\Sym{n}$ and $f_i$ is a factorization of $\s_i$ into the minimal number $\depth{\s_i}=n-\len{\s}$ of transpositions, then $f$ is transitive if and only if $f_1 \cdots f_r$ is transitive. We can further restrict~\eqref{eq:primidentity} to any genus by selecting appropriate powers of $u$.
\end{proof}

\subsection{Proof of Theorem~\ref{thm:connections}}  Fix any permutation $\p$ of cycle type $\a$.  By~\eqref{eq:rank}, every genus $g$ factorization of $\p$ has depth $d =|\a|+\len{\a}-2+2g$.  Restrict Theorem~\ref{thm:primcorrespondence} to factorizations of genus $g$ and extract the coefficient of $\p u^d$ to obtain
\begin{equation}
\label{eq:monoineq}
	\mgfacts{\a} = (-1)^{d} \sum_f
	 (-1)^{\len{f}},
\end{equation}
where the sum extends over all genus $g$ inequivalent factorizations $f$ of $\p$.  
This is the leftmost equality of Theorem~\ref{thm:connections}. Setting $\bq = \mathbf{1}$ in Theorem~\ref{thm:propercorrespondence} provides the middle equality.

\subsection{Generating Series for Monotone Factorizations}
We conclude this section by restating the relationship between monotone and inequivalent factorizations in terms of familiar generating series. For simplicity we will restrict our attention to the minimal transitive case, but the obvious analogues hold in any genus.  

Let $\p$ be a fixed permutation of type $\a=(\a_1,\ldots,\a_m)$.  Then~\eqref{eq:monoineq} identifies $\mgfacts[0]{\a}$ as the sum of
$
	(-1)^{\len{f}+\depth{f}}
$
over all inequivalent minimal transitive factorizations $f$ of $\p$.   But this is precisely the coefficient of $\prod x_i^{\a_i}/{\a_i}$ in the series $\icGS{m}(\mathbf{x},\mathbf{q})$ evaluated at
$q_k = (-1)^k$, since $\b(f)=(\b_2,\b_3,\ldots)$ implies $ \len{f}+\depth{f}=\sum_{k \geq 2} k\b_k$.  

Let us define
\begin{equation*}
	 \pGS{m}(\bx) = 
	 		\sum_{\a} \mgfacts[0]{\a} \frac{x_1^{\a_1} \cdots x_n^{\a_m}}{\a_1 \cdots \a_m}
\end{equation*}
in analogy with~\eqref{eq:icGS} and~\eqref{eq:ocGS}. Then we have the following algebraic connection between monotone and inequivalent factorizations.
\begin{cor}
\label{cor:monotone}
For all $m \geq 1$,
\begin{equation*}
\pGS{m}(\bx)
= \icGS{m}(\bx,\bq) \Big|_{q_k = (-1)^k}.
\end{equation*}
\end{cor}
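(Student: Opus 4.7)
The plan is to unpack the specialization $q_k=(-1)^k$ coefficient by coefficient and match it against the identity~\eqref{eq:monoineq}. Concretely, I would first substitute $q_k=(-1)^k$ into the definition~\eqref{eq:icGS} of $\icGS{m}$:
\begin{equation*}
\icGS{m}(\bx,\bq)\Big|_{q_k=(-1)^k}
= \sum_{\a,\b} \icfacts{\a}{\b}\,\frac{x_1^{\a_1}\cdots x_m^{\a_m}}{\a_1\cdots\a_m}\,(-1)^{\sum_k k\b_k}.
\end{equation*}
Any inequivalent factorization $f$ of signature $\b$ satisfies $\len{f}=|\b|$ and $\depth{f}=\sum_k(k-1)\b_k$, so $\sum_k k\b_k=\len{f}+\depth{f}$. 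By~\eqref{eq:rank}, every genus $0$ factorization of a permutation of cycle type $\a$ has depth exactly $d=|\a|+\len{\a}-2$; in particular, for a fixed $\a$ this quantity is independent of $\b$.

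Next, I would factor the sign accordingly: $(-1)^{\sum_k k\b_k}=(-1)^{d}(-1)^{\len{f}}$ for every factorization contributing to $\icfacts{\a}{\b}$. Pulling $(-1)^d$ out of the inner sum over $\b$ and recognizing the remaining sum as the right-hand side of~\eqref{eq:monoineq} (already established as a consequence of Theorem~\ref{thm:primcorrespondence}) gives
\begin{equation*}
\sum_{\b}\icfacts{\a}{\b}\,(-1)^{\len{f}} = (-1)^{d}\,\mgfacts[0]{\a}.
\end{equation*}
Multiplying by $(-1)^d$ and summing against the monomial weights $x_1^{\a_1}\cdots x_m^{\a_m}/(\a_1\cdots\a_m)$ then reproduces exactly the defining series $\pGS{m}(\bx)$.

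There is no real obstacle here; the substantive content lies in Theorems~\ref{thm:propercorrespondence} and~\ref{thm:primcorrespondence} (and the derivation of~\eqref{eq:monoineq} that follows them), and the corollary is essentially a repackaging of those facts as a coefficient-wise identity of generating series. The only point requiring a little care is verifying that the two sign contributions $(-1)^d$ and $(-1)^{\len{f}}$ combine correctly through the substitution $q_k=(-1)^k$, which uses the genus $0$ constraint on depth in an essential way so that $d$ depends only on $\a$ and can be pulled out of the $\b$-sum. The analogous statement in higher genus holds verbatim upon replacing $d$ by $|\a|+\len{\a}-2+2g$ and $\mgfacts[0]{\a}$ by $\mgfacts{\a}$.
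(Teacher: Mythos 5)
Your proof is correct and takes essentially the same route as the paper: both rely on~\eqref{eq:monoineq}, the identity $\len{f}+\depth{f}=\sum_{k\geq 2}k\b_k$, and the genus-$0$ constancy $\depth{f}=|\a|+\len{\a}-2$, simply running the chain of equalities in the opposite direction (you expand the specialized generating series and recognize $\mgfacts[0]{\a}$, whereas the paper starts from $\mgfacts[0]{\a}$ and recognizes the coefficient).
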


This allows us to apply Theorem~\ref{thm:Q} to compute $\pGS{m}$ for $m \leq 3$.  If we set $q_k=(-1)^k$ in~\eqref{eq:Q} then~\eqref{eq:Q_xis} becomes $\xis = x(1+\xis)^{2}$, thereby identifying $\xis$  as the generating series of the Catalan numbers:
$$
	\xis(x) = \sum_{n \geq 1} \frac{1}{n+1}\binom{2n}{n} x^n
			= \frac{1-\sqrt{1-4x}}{2x}-1.
$$
Corollary~\ref{thm:goulden} is readily seen to be equivalent to
\begin{equation}
\label{eq:pgs_general}
	\pGS{m}(\bx) = (2\Dopx+1)^{(m-3)} \prod_{i=1}^m \frac{2\xis_i}{1-\xis_i},
\end{equation}
and indeed setting $q_k=(-1)^k$ throughout Theorem~\ref{thm:Q} gives this result for $m \leq 3$.  

We have been unable to use Corollary~\ref{cor:monotone}
and~\eqref{eq:pgs_general} to deduce anything substantive about the
structure of $\icGS{m}$. However, given the strength of the
connections that we have established, comparison of
\eqref{eq:hurwitz2} and~\eqref{eq:pgs_general} leads us to conjecture
that $\ikGS{m}{2}$ may, too, be expressed ``compactly'' in terms of an
$(m-3)$-times iterated differential operator.  This conjecture has
motivated our search for the expression for $\ikGS{4}{2}$ which will
be presented in Theorem~\ref{thm:transpositions} below.

\section{Graphical Models for Inequivalent Factorizations}
\label{sec:models}

In this section we establish Theorem~\ref{thm:digraphs} and then restate the result in a manner more convenient for our derivation of Theorem~\ref{thm:Q}.

\subsection{Shuttle Diagrams and Alternating Maps}
We begin with a nice visualization of a product $(i_1\,j_1)(i_2\,j_2)\cdots(i_r j_r)$ of transpositions in $\Sym{n}$, originally
suggested in \cite{Gar59} (see also \cite{Bog08}).
First, draw $n$ horizontal arrows, directed from right to left and labelled from 1 to $n$. Then connect these arrows in pairs using $r$ vertical lines (``shuttles''), with one shuttle between arrows $i_k$ and $j_k$ for each transposition $(i_k\,j_k)$, and such that the right-to-left order of the shuttles matches that of the factors in the product. 
See Fig.~\ref{fig:shuttlediagram}a for an illustration.
\begin{figure}[t]
  \centering
  \includegraphics[height=3.5cm]{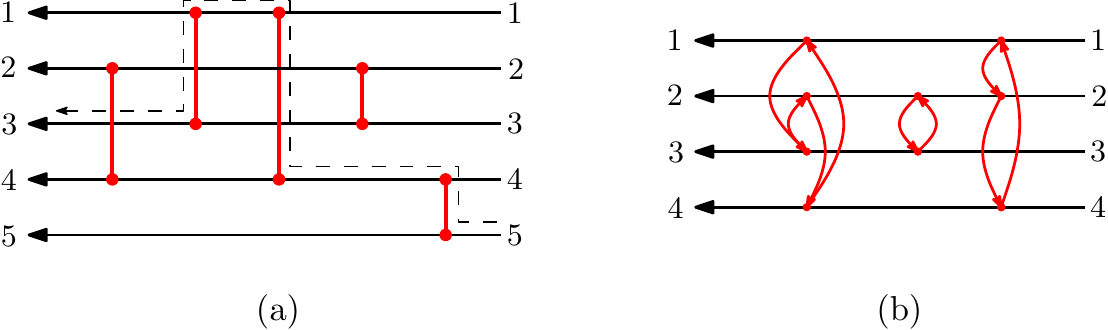}
  \caption{(a) Visualizing the product $(2\,4)(1\,3)(1\,4)(2\,3)(4\,5)$ via a ``shuttle
    diagram''.  The image of 5 under the product
    is indicated by the dashed line.
	(b) General cycle factors are represented by ``multi-shuttles'', illustrated here
	for the product $(1\,3\,2\,4) \cdot (2\,3) \cdot (1\,2\,4)$. }
  \label{fig:shuttlediagram}
\end{figure}

Observe that the image of symbol $i$ under the product is found by beginning at the tail of arrow $i$ and tracing to the left, following shuttles whenever encountered, until the head of an arrow is reached.  The label of this terminal arrow is the image of $i$. 

This construction is easily extended to allow for products of cycles of any length:  A $k$-cycle factor $(i_1\, i_2\, \cdots\, i_k)$ is represented by a ``multi-shuttle'' joining arrows $i_1, i_2,\ldots,i_k$ in cyclic order, as demonstrated in Fig.~\ref{fig:shuttlediagram}b.   Thus we have a simple correspondence between cycle factorizations and ``shuttle diagrams''.

Let $f$ be a cycle factorization.  From its shuttle diagram,
construct a labelled digraph $\fgraph{f}$ as follows:  First, place a
vertex at the tail and head of each arrow and assign each of these
vertices the same label as the arrow. Note that the shuttles subdivide
the arrows into segments. Assign each such segment the label and
direction inherited from its arrow.  Finally, contract each shuttle
into a single vertex to obtain $\fgraph{f}$. Fig.~\ref{fig:shrinking}
illustrates this process.

\begin{figure}[t]
  \centering
  \includegraphics[width=0.8\textwidth]{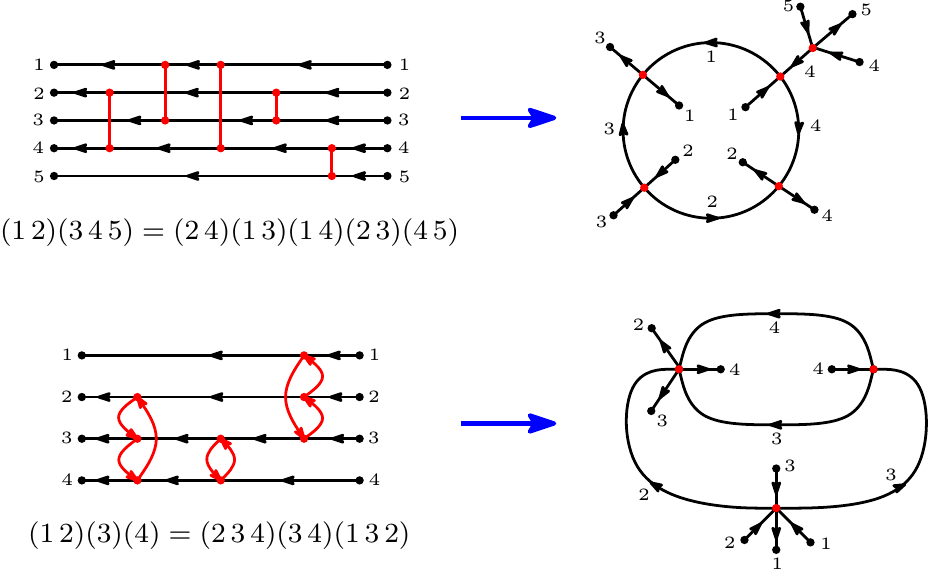}
  \caption{Creating a labelled digraph from a shuttle diagram by
    collapsing each shuttle to a single vertex.  Labels of the edges
    incident to leaves coincide with the leaf labes and are omitted.}
  \label{fig:shrinking}
\end{figure}

We now associate with $f$ an alternating map $\fmap{f}$ by specifying an embedding of $\fgraph{f}$ in an orientable surface. Recall that such an embedding is fully determined by the cyclic order of edges around internal (non-leaf) vertices~\cite{mohar-thomassen}. Each internal vertex of $\fgraph{f}$ arises as a collapsed shuttle. In particular, the shuttle corresponding to  factor $(i_1\,i_2\,\cdots\,i_k)$ yields a vertex $v$ having $k$ in-directed edges labelled $i_1,\ldots,i_k$ and $k$ out-directed edges labelled the same.  The map $\fmap{f}$ is obtained by insisting that these edges are arranged around $v$ so that, when listed in counter-clockwise order, their labels are $(i_1,i_1,i_2,i_2,\ldots,i_k,i_k)$ with alternating directions $(\mathit{out}, \mathit{in}, \mathit{out},\ldots)$.  Fig.~\ref{fig:localrules} illustrates this local embedding rule. Notice that the planar embeddings in Fig.~\ref{fig:shrinking} obey this rule, making them the alternating maps associated with the given factorizations.
\begin{figure}[t]
  \centering
  \includegraphics[width=0.9\textwidth]{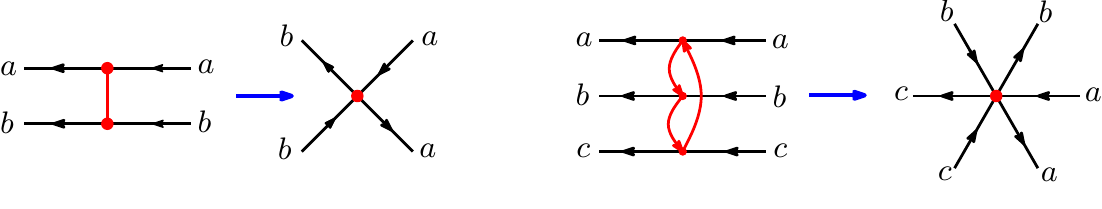}
  \caption{Local embedding rules for vertices arising from the transposition $(a\,b)$ and 3-cycles $(a\,b\,c)$. 
  These rules have been observed in the planar embeddings of Fig.~\ref{fig:shrinking}.
}
  \label{fig:localrules}
\end{figure}

Clearly the factors of $f$ can be recovered from $\fmap{f}$.  However, we cannot generally determine the \emph{order} of the factors, so the transformation $f \mapsto \fmap{f}$ is not fully reversible.  Fortunately, it fails to be injective in a very convenient manner.
 
\begin{lem}
\label{lem:equaldigraphs}
$\fmap{f}=\fmap{f'}$ if and only if $f \sim f'$
\end{lem}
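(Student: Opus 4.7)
The plan is to prove the two directions separately. The forward direction follows almost immediately from the construction of $\fmap{f}$, while the reverse requires identifying precisely when two orderings of cycle factors yield the same alternating map.

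For the forward direction, it suffices to check that interchanging two adjacent disjoint factors $\s_i$ and $\s_{i+1}$ preserves the map, since $\sim$ is generated by such swaps. In the shuttle diagram for $f$, the multi-shuttles corresponding to $\s_i$ and $\s_{i+1}$ attach to disjoint sets of arrows, so they subdivide disjoint portions of the horizontal arrows. Swapping their horizontal positions therefore leaves the labelled segment structure of every arrow unchanged and the local cyclic edge data at every internal vertex unchanged. Hence $\fgraph{f} = \fgraph{f'}$ with the same embedding, and $\fmap{f} = \fmap{f'}$.

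For the reverse direction, I would reconstruct from $\fmap{f}$ a partial order $\prec$ on its internal vertices whose linear extensions are exactly the orderings of factors realizing this map. The local embedding rules (Fig.~\ref{fig:localrules}) recover, from an internal vertex $v$, the cycle $\s_v$ whose shuttle collapsed to $v$. For each symbol $a \in \{1,\dots,n\}$, the map contains a directed path from the leaf labelled $a$ at one end of arrow $a$ to the leaf labelled $a$ at the other end, obtained by concatenating the edge-segments inherited from arrow $a$; this path visits, in right-to-left order, exactly those internal vertices whose shuttles were attached to arrow $a$. Declare $v \prec v'$ whenever $\s_v$ and $\s_{v'}$ share a symbol $a$ for which $v'$ precedes $v$ on the path associated with $a$. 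The remaining steps are: (i) show that $\prec$ is acyclic and hence a genuine partial order; (ii) show that the factor orderings $(\s_1,\dots,\s_r)$ with $\fmap{(\s_1,\dots,\s_r)} = \fmap{f}$ are precisely the linear extensions of $\prec$; and (iii) conclude via the standard fact that any two linear extensions of a finite poset are connected by a sequence of transpositions of adjacent incomparable elements. By construction, incomparable elements correspond to cycles sharing no symbol, hence to disjoint commuting factors, so such swaps realize $\sim$-moves.

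The main obstacle is step (i). The cleanest route is not to prove acyclicity of $\prec$ in the abstract, but to bootstrap from the given factorization $f$: the horizontal left-to-right ordering of shuttles in the shuttle diagram for $f$ provides a global total order that refines every per-arrow order, so it witnesses a linear extension of $\prec$ and in particular precludes any cycle. One must then check independence of the representative a posteriori via step (ii), and take care with cycle factors sharing several symbols---here the alternation rule of Fig.~\ref{fig:localrules} is exactly what guarantees that the path along each arrow $a$ is well-defined and visits each attached shuttle once, so that no contradictory constraints arise between different arrows.
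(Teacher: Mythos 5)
Your proof is correct and takes essentially the same approach as the paper's: read off from the directed paths in $\fmap{f}$ the relative order of the factors moving each symbol $i$, and conclude that this data together with the multiset of factors determines the $\sim$-class. Where the paper simply asserts this last fact (it is the standard projection/linear-extension characterization of equivalence classes in a commutation monoid), you spell out a proof via the poset $\prec$ and connectivity of linear extensions by adjacent transpositions of incomparable elements --- a useful expansion of a step the paper leaves implicit, but not a genuinely different route.
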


\begin{proof}
The equivalence class of a factorization $f=(\s_1,\ldots,\s_r)$ is uniquely determined by the multiset $[\s_1,\ldots,\s_r]$ of factors 
and the relative orderings of the factors $[\s_j \,:\, \s_j(i) \neq i]$ that move symbol $i$, for all $i$.   But the factors moving $i$  correspond with the shuttles incident with arrow $i$ in the shuttle diagram of $f$, and the relative ordering of these shuttles is encoded by $\fmap{f}$. Indeed, their right-to-left order is that in which the corresponding vertices are encountered on the unique directed path in $\fmap{f}$ that connects the two leaves labelled $i$ by edges of the same label.
\end{proof}

We require one further preliminary result before proving Theorem~\ref{thm:digraphs}.

\begin{lem}
\label{lem:sources}
Every face of an alternating map contains an equal number of sources and sinks. If an alternating map is acyclic (\emph{i.e.} has no directed cycles) then each of its faces contains both a source and a sink.
\end{lem}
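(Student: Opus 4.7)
The plan is to walk the boundary of a face $F$ in a fixed orientation (say with $F$ on the left) and track, for each edge traversed, whether we traverse it \emph{forward} (in the direction of its arrow) or \emph{backward} (against it). I will argue that this ``direction sense'' of the traversal flips only at leaves, and from this both conclusions follow immediately.

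First I would analyze an internal vertex $v$ of even degree $2k$. With the convention that $F$ lies on the left of the walk, when the boundary walk passes through $v$ it arrives on some edge $e$ and leaves on the edge $e'$ that is clockwise-next to $e$ in the rotation at $v$. The alternating condition, that the edges around $v$ alternate in/out in cyclic order, forces $e$ and $e'$ to carry opposite orientations at $v$. A short case check then gives: if $e$ points into $v$ then we arrive forward and, since $e'$ points out of $v$, leave forward; if $e$ points out of $v$ we arrive backward and leave backward. Either way the direction sense is preserved at $v$. Getting the clockwise/counterclockwise convention right is the one point needing genuine care; after that the argument is mechanical.

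Next I would look at leaves. A source has a unique out-directed edge, so walking toward the leaf is backward and walking away is forward, producing a flip \emph{backward} $\to$ \emph{forward}. Dually, a sink produces a flip \emph{forward} $\to$ \emph{backward}. Since the boundary walk of $F$ is closed, the total number of flips is even; moreover, they must alternate strictly between these two types, since any two consecutive flips of the same type are impossible. Hence sources and sinks occur in strict alternation around the boundary of $F$, yielding the first claim.

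For the second claim, suppose the map is acyclic and some face $F$ has no source (equivalently, by the first claim, no sink). Then no flips occur along the boundary walk, so it proceeds with a single direction sense throughout; after possibly reversing the traversal we may assume it is always forward. But then the boundary is a closed directed walk in the underlying digraph, and any closed directed walk in a finite digraph contains a directed cycle, contradicting acyclicity. Hence each face of an acyclic alternating map contains at least one source, and therefore also at least one sink.
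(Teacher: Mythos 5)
Your proof is correct and takes essentially the same approach as the paper: both track the forward/backward sense of edge traversal along the boundary walk of a face, observe that flips occur exactly at sources and sinks (with one type flipping each way), conclude alternation and hence equality, and then note that a flip-free closed boundary walk is a directed closed walk containing a directed cycle. The only cosmetic difference is your choice of face-on-the-left convention versus the paper's face-on-the-right, and that you spell out the case check at internal vertices which the paper leaves implicit.
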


\begin{proof}
Let $F$ be a face of an alternating map. Recall that the \emph{boundary walk} $W$ of $F$ is the closed walk traversing the boundary of $F$ and keeping $F$ on the right, relative to the direction of traversal.

The edges encountered along $W$ are directed either forward (in the direction of $W$) or backward, and the alternating condition implies that a change in direction occurs at a vertex $w$ on $W$ if and only if $w$ is a source or a sink. In particular, the direction changes from forward to backward at a sink, and from backward to forward at a source. As a result, the segment of $W$ beginning at any source $r$ will be a forward-directed path from $r$ to a sink, and the segment beginning at a sink $s$  will be  a backward-directed path from $s$ to a source, \emph{etc.}   Therefore $F$ contains the same number of sources as sinks.  Moreover, if $F$ contains neither a source nor a sink then $W$ must be a directed closed walk, and therefore contains a directed cycle. 
\end{proof}

\subsection{Proof of Theorem~\ref{thm:digraphs}}
Let $\p \in \Sym{n}$ be of cycle type $\a=(\a_1,\ldots,\a_m)$, and let $f$ be a genus $g$ cycle factorization of $\p$ of length $r$ and signature $\b$. 

Clearly $\fmap{f}$ is alternating. It is also acyclic, since (by construction) there exists a directed path from internal vertex $u$ to internal vertex $v$ if and only if  the factor of $f$ corresponding to $u$ appears to the right of the factor of $f$ corresponding to $v$. This precludes the existence of both a directed path from $u$ to $v$ \emph{and} one from $v$ to $u$.

Lemma~\ref{lem:sources} guarantees every face of $\fmap{f}$ contains at least
one source.  Choose any source $r$ in a face $F$ and suppose it
has label $i$. As in the proof of Lemma~\ref{lem:sources}, the boundary walk of $F$
contains a directed path $P$ from $r$ to a sink $s$, followed by a
reverse-directed path $P'$ from $s$ to a source $r'$.  In fact, $P$ is simply the path
traversed when using the shuttle diagram of $f$ to determine $\p(i)$,
and $P'$ is a backwards traversal of arrow $\p(i)$.  Hence $s$ and
$r'$ both have label $\p(i)$. Repeating this argument, we conclude
that the sources (and sinks) of face $F$ have labels $i, \p(i),
\p^2(i), \ldots$ when listed in the direction of the boundary walk.
In particular, the faces of $\fmap{f}$ correspond with the cycles of
$\p$.  Therefore $\fmap{f}$ has $\len{\p}$ faces.

Observe that $\fmap{f}$ has $2n+\sum_k \b_k$ vertices, with $n$
sources, $n$ sinks, and $\b_k$ vertices of degree $2k$, $k\geq 2$.
Thus it has $\frac{1}{2}(\sum_k 2k\b_k + 2n)= \depth{f}+n+\sum_k \b_k$
edges. The Euler-Poincare formula therefore shows $\fmap{f}$ to be of
genus $\frac{1}{2}(2-n+\depth{f}-\len{\p})$, which by~\eqref{eq:rank}
evaluates to the genus $g$ of the factorization.

We now redecorate $\fmap{f}$ to obtain a map satisfying the conditions of Theorem~\ref{thm:digraphs}. First delete all edge labels from $\fmap{f}$, observing that the embedding
rules make them recoverable from the labels of the sources/sinks.  
Let $C_1,\ldots,C_m$ be a canonically ordered list of the cycles of
$\p$, with $C_i$ of length $\a_i$, and let $c_i$ be the least element
of $C_i$.  For each $i$, find the
unique source labelled $c_i$, distinguish its position, and assign
label $i$ to the ambient face.  We have seen that the label of any source determines
the labels of all sources/sinks in the same face, so we can now delete the labels of all sources and sinks with no loss of information.
This results in a map $\fmap{f}'$ satisfying the conditions of Theorem~\ref{thm:digraphs}.  Since the passage from $\fmap{f}$ to $\fmap{f}'$ is reversible, Lemma~\ref{lem:equaldigraphs} shows that the equivalence class of
$f$ can be uniquely recovered from $\fmap{f}'$.  This completes the proof.

\subsection{An Undirected Analog of Theorem~\ref{thm:digraphs}}

When we turn to counting factorizations in the next section, it will be more
convenient to deal with undirected maps than acyclic alternating maps. 
Clearly the directions of all edges in an alternating map are determined by the direction of any one edge. The key to stripping edge directions lies in finding an appropriate analogue of the
acyclic condition.

Let $W$ be a walk in an orientable map. A \emph{corner} of $W$ is an ordered pair of consecutive edges of $W$. We say that a corner $(e,e')$ is \emph{odd} if a path at vanishingly small distance to the right of $W$ crosses an odd  number of  edges as it shadows $W$ along $e$ and $e'$.  A corner is \emph{even} if it is not odd.  (See Fig.~\ref{fig:oddcorners}.)  
\begin{figure}[t]
  \centering
  \includegraphics[scale=0.8]{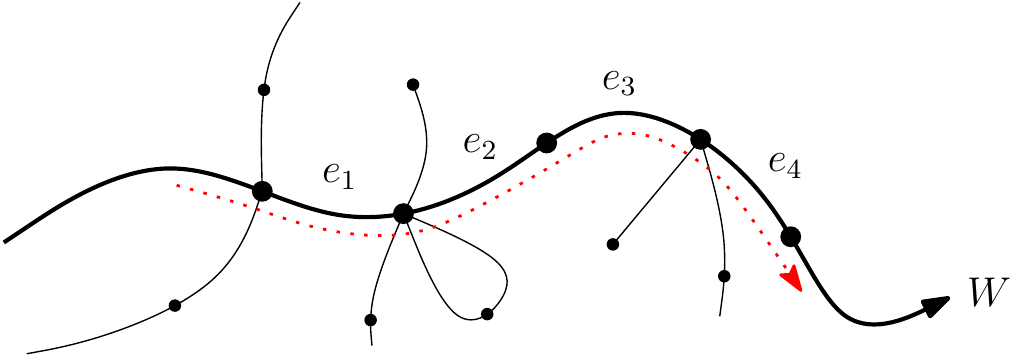}
  \caption{Corner $(e_1,e_2)$ of walk $W$ is odd, whereas $(e_2,e_3)$ and $(e_3,e_4)$ are even.}
  \label{fig:oddcorners}
\end{figure}
With this definition in hand, we have the following restatement of Theorem~\ref{thm:digraphs}.

\begin{thm}
  \label{thm:even_degree}
  Let $\a=(\a_1,\ldots,\a_m)$ be an $m$-part composition of $n$ and let $\p \in \Sym{n}$ be of cycle type $\a$.
  Minimal inequivalent cycle factorizations of $\p$
  with signature $\b=(\b_2,\b_3,\ldots)$
  are in $\a_2\cdots\a_m$-to-one correspondence with planar maps with
  $m$ labelled faces in which 
  \begin{enumerate}
  \item[(a)] every vertex is a leaf or has even degree $\geq 4$,
  \item[(b)] face $i$ contains exactly $2\a_i$ leaves
  \item[(c)] one leaf in face $1$ is distinguished,
  \item[(d)] there are a total of $\b_k$ vertices of degree $2k$, for all $k \geq 2$, and
  \item[(e)] every cycle has a positive even number of odd corners.
  \end{enumerate}
  Genus $g$ inequivalent cycle factorizations are in
  correspondence with genus $g$ maps in the
  same fashion as above.
\end{thm}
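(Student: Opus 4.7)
The plan is to set up a direct correspondence between the acyclic alternating maps of Theorem~\ref{thm:digraphs} and the undirected maps of Theorem~\ref{thm:even_degree}, with the stated $\a_2 \cdots \a_m$-to-one multiplicity. The governing principle is that in an alternating orientation, the direction of any single edge at an internal vertex determines the directions of all remaining edges at that vertex.

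The forward direction begins with an acyclic alternating genus $g$ map $\mathscr{M}$ satisfying the conditions of Theorem~\ref{thm:digraphs}. I erase all edge orientations together with the source/sink data, retaining only the distinguished source in face~$1$ as the distinguished leaf of face~$1$. Conditions (a)--(d) of Theorem~\ref{thm:even_degree} are then immediate. For condition (e), I traverse any cycle $C = v_0 e_0 v_1 e_1 \cdots v_{k-1} e_{k-1} v_0$ in the underlying graph and track, at each step, the orientation of $e_i$ relative to the direction of traversal. At vertex $v_i$, the alternating rule together with the parity of the corner $(e_{i-1}, e_i)$ determines whether the orientation of $e_i$ agrees with or flips from that of $e_{i-1}$. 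I would identify the flipping case with ``odd corner'' via a direct local count of the edges separating $e_{i-1}$ from $e_i$ in the cyclic order at $v_i$. Consistency of orientations upon returning to $e_0$ then forces the number of odd corners around $C$ to be even, while acyclicity forces it to be positive: a cycle with no odd corners would carry a consistent orientation and hence produce a directed cycle in $\mathscr{M}$.

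The reverse direction takes an undirected map satisfying (a)--(e), declares the distinguished leaf in face~$1$ to be a source, and propagates orientations throughout the map by applying the alternating rule at each internal vertex. The ``even'' part of condition (e) guarantees consistency of this propagation around every cycle, so the orientation is globally well defined, while the ``positive'' part precludes directed cycles by the same local analysis as above. This yields an acyclic alternating map satisfying (a) and (c) of Theorem~\ref{thm:digraphs}. To recover the full data of Theorem~\ref{thm:digraphs} one further selects a distinguished source in each of faces $2, \ldots, m$; by Lemma~\ref{lem:sources} each face~$i$ contains exactly $\a_i$ sources, giving $\a_2 \cdots \a_m$ choices and hence the claimed multiplicity. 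The genus-preserving version is immediate since the underlying embedded graph is never modified.

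The main obstacle is the local parity computation used to identify odd corners with direction flips under the alternating rule. This requires carefully counting the edges crossed by a path infinitesimally to the right of the walk as it shadows $C$ through a corner and relating that count to the parity of the cyclic offset between $e_{i-1}$ and $e_i$ at a vertex of degree $2k$. Once this local correspondence is pinned down, the global consistency and acyclicity statements follow by summation around each cycle, and the bijection falls out cleanly.
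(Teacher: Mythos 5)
Your proposal is correct and follows essentially the same route as the paper: declare the distinguished leaf a source, propagate orientations via the alternating rule, use the even count of odd corners for consistency and the positive count for acyclicity, and recover the $\a_2\cdots\a_m$ multiplicity by choosing a distinguished source in each of faces $2,\ldots,m$. The only difference is expository — you spell out both directions and flag the local parity computation (odd corner $\Leftrightarrow$ direction flip) that the paper asserts without detail — but the underlying argument is the same.
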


\begin{proof}
  We describe a simple correspondence between the directed maps of
  Theorem~\ref{thm:digraphs} and the undirected ones described above.
  Given such an undirected map $\mathcal{M}$, the distinguished leaf
  in face $1$ is declared to be a source and its edge is directed accordingly,
  away from the leaf.  This choice of direction is then propagated around
  each vertex of $\mathcal{M}$ in alternating fashion to obtain an alternating map
  $\mathcal{M}'$.  The fact that there are an even number of odd
  corners along every cycle ensures that no inconsistencies arise
  in doing so.  Moreover, since the edges along a cycle in an
  alternating map must change direction at an odd corner, the fact
  that every cycle of $\mathcal{M}$ has at least one odd corner
  implies that $\mathcal{M}'$ is acyclic.  The sources in each
  face of $\mathcal{M}'$ are readily identified, and one source in each of faces
  $2,3,\ldots,m$ is chosen to be distinguished in $\a_2\cdots\a_m$ ways.
  The remaining properties are in direct correspondence.
\end{proof}


  
A remark on condition (c) of Theorem~\ref{thm:even_degree} is warranted. Since face-labelled maps with $m \geq 3$ faces have no nontrivial symmetries, a vertex in face 1 can be distinguished arbitrarily, resulting in a $2\a_1\a_2\cdots\a_m$-to-one correspondence between factorizations and maps satisfying all conditions of the theorem \emph{except} (c).  This is not true in cases $m=1$ and $m=2$, since planar maps with one or two labelled faces can have nontrivial automorphisms (such as rotational symmetry).


%


\begin{figure}[t]
  \centering
  \includegraphics[scale=0.85]{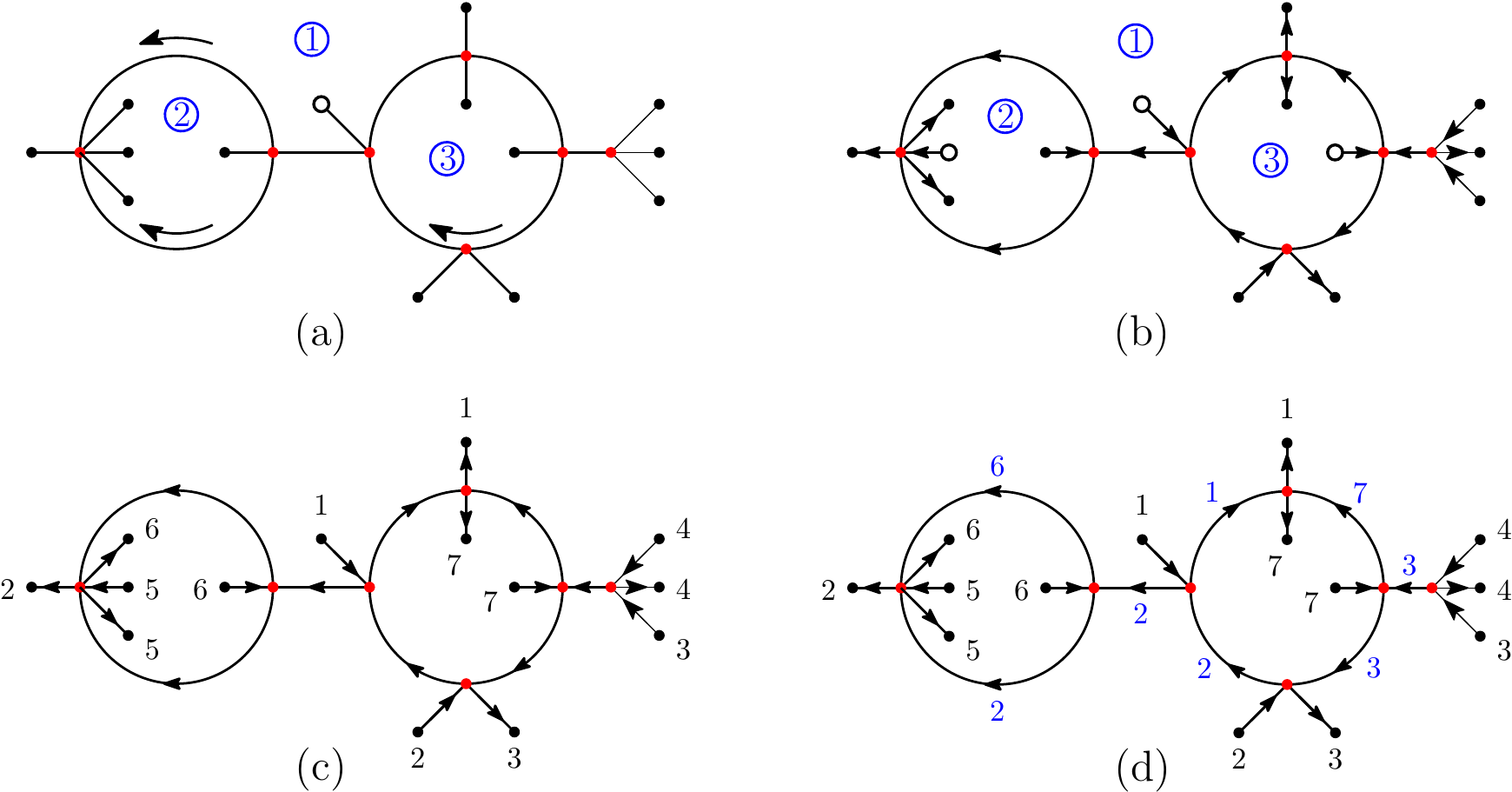}
  \caption{Reconstructing a factorization from a map: Recovering edge
    directions and labels.}
  \label{fig:reconstruction1}
\end{figure}

\begin{example}
  To illustrate Theorem~\ref{thm:even_degree} let us reconstruct
  a factorization from the map $\mathcal{M}$ shown in  Fig.~\ref{fig:reconstruction1}a.
	(For convenience we have indicated the orientations of boundary walks.)
  Note that $\mathcal{M}$
  satisfies the conditions of the theorem, with parameters $(\a_1,\a_2,\a_3)=(4,2,1)$
  and $(\b_2,\b_3,\ldots)=(6, 1,0,\ldots)$. It should
	therefore correspond   with $\a_2 \cdot \a_1 = 2$ 
  inequivalent factorizations of the permutation $(1\,2\,3\,4)(5\,6)(7)$. 

Begin by directing all edges of $\mathcal{M}$
so as to make the distinguished leaf a source and the entire map alternating. This can be done uniquely.  Then distinguish one source in each of faces 2 and 3. This can be done in $\a_2\cdot\a_3=2$ ways, one of which is shown in Fig.~\ref{fig:reconstruction1}b.
 
  Now label the leaves of face 1.  Begin at the distinguished source,
  giving it label 1, and then assign labels $2,2,3,3,4,4,1$ to the remaining leaves in the order in which they are encountered along the boundary walk.
Repeat this labelling procedure in faces 2 and 3 using label sets $\{5,6\}$ and $\{7\}$, 
  respectively, and then remove face labels. See Fig.~\ref{fig:reconstruction1}c.

Next, assign label $j$ to each edge of the boundary walk from sink $j$ to source $j$, for all $j$. (Note that this is always a reverse-directed path.)  See Fig.~\ref{fig:reconstruction1}d, where we suppress labels of edges incident with leaves, as such edges share the label of their leaf.

Finally, label each internal vertex with the cycle obtained by listing the labels of its incident out-directed edges as they appear in counter-clockwise order around the vertex.
Remove all leaves and edge labels to obtain an acyclic digraph whose nodes are labelled with cycles. (Fig.~\ref{fig:reconstruction2}.) This digraph induces a partial order on its vertices, with $u < v$ if there is a directed path from $u$ to $v$. Choose any linear extension of this order and list the vertices (cycles) from right to left accordingly; for instance,
  \begin{equation*}
    (2\,5\,6) \cdot (2\,6) \cdot (1\,7) \cdot (1\,2) \cdot (2\,3)
    \cdot (3\,7) \cdot (3\,4).
  \end{equation*}
Observe that this is indeed a factorization of $(1\,2\,3\,4)(5\,6)(7)$.  
\end{example}

  \begin{figure}[t]
    \centering
    \includegraphics[scale=0.85]{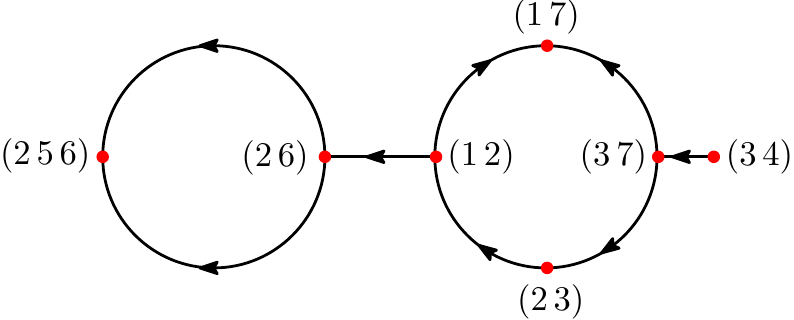}
    \caption{Reconstructing of a factorization from a map: Recovering and ordering the factors.}
    \label{fig:reconstruction2}
  \end{figure}

\subsection{Connections with Mesoscopic Physics}
\label{sec:physics}

Interestingly, very similar maps arise in mesoscopic physics, when
trying to evaluate statistical moments of electron transport through
an irregularly shaped (or ``chaotic'') cavity.  One approach to the
problem approximates the quantum probability of transmission through
the cavity by a sum over all classical trajectories that could be
taken by a billiard ball.  A series of approximations reduces the
problem to the enumeration of pairs of sets of curves possesing the
properties described below.

\begin{figure}[t]
  \centering
  \includegraphics[scale=1]{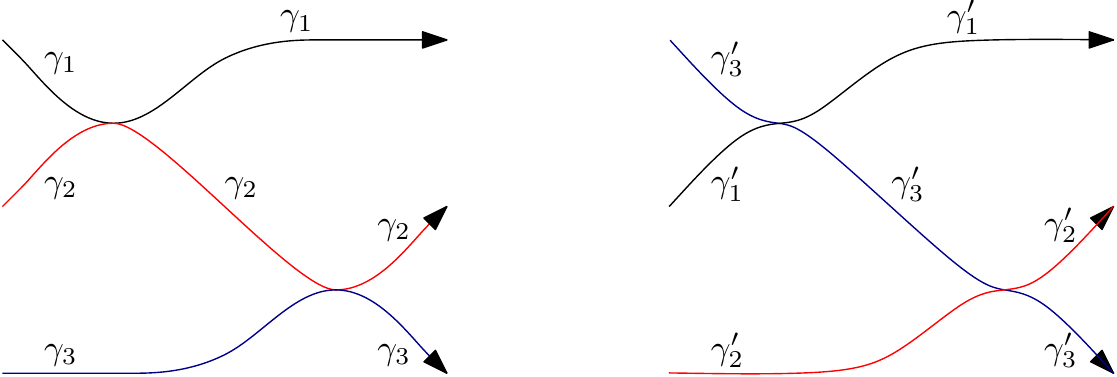}
  \caption{Two sets of curves occupying the same space and connecting
    the same sets of points but in a different order.  The curves on
    the left ``bounce'' off each other, while the curves on the right
    cross over.}
  \label{fig:leadingorder}
\end{figure}

Let $\{\gamma_j(t)\}_{j=1}^n$ and $\{\gamma'_j(t)\}_{j=1}^n$ be the
two sets of curves parametrized by $t \in [0,1]$.  Then
\begin{enumerate}
\item the curves connect the same sets of points but in a different
  order, specified by the permutation $\pi$,
  \begin{equation*}
    \gamma'_j(0) = \gamma_{\pi(j)}(0), \qquad \gamma'_j(1) = \gamma_j(1),
  \end{equation*}
\item the curves in one set occupy the same space as the curves in the
  other,
  \begin{equation*}
    \bigcup_{j} \gamma_j([0,1])
    =
    \bigcup_{j} \gamma'_j([0,1]).
  \end{equation*}
\end{enumerate}
To satisfy both conditions, the curves within each set must intersect,
and it is the topologically inequivalent configurations that are to be
counted.  An example of two sets of three curves satisfying the
conditions with permutation $\pi = (1\, 2\, 3)$ is given in
Fig.~\ref{fig:leadingorder}.  In \cite{BerHarNov08} it was observed
that leading order (in a sense we cannot describe here) configurations
are in one-to-one correspondence with minimal length inequivalent
factorizations of the full cycle $(1\, 2\,\ldots\,n)$.  For example,
the configuration of Fig.~\ref{fig:leadingorder} corresponds to the
factorization $(1\, 2\, 3) = (1\,2) \cdot (2\, 3)$; the reader will
observe that the two sets of curves in the figure can be thought of as
the two ways of going through the shuttle diagram $(1\,2) \cdot (2\,
3)$.

This correspondence extends further.
In fact, it was shown in~\cite{BerKui_jmp13a} that all contributing
configurations (``semiclassical diagrams'') satisfy conditions almost
identical to those of Theorem~\ref{thm:digraphs} (corresp.\
Theorem~\ref{thm:even_degree}), with the only difference being the
absence of ``acyclic'' condition (corresp.\ condition (e) is relaxed
to allow zero odd corners).  The cycle type of the permutation $\pi$
would correspond to the type of the physical quantity considered
(linear vs nonlinear moments), while the order at which a diagram
contributes is determined by the genus of the corresponding map.
Since the result of the physics evaluation \cite{BerKui_jmp13a}
coincides with a prediction obtained by integration over the unitary
group, a rich connection between inequivalent factorizations and
random matrix theory was expected and led us, via
\cite{MatNov_fpsac10} and \cite{Col_imrn03}, to
Theorem~\ref{thm:connections}.

\section{Enumeration of Inequivalent Factorizations}
\label{sec:proof}

In this section we prove Theorem~\ref{thm:Q} by counting all corresponding maps according to  Theorem~\ref{thm:even_degree}. We address cases $m=1,2$, and $3$ separately, although  the general method in  each case is the same.  First we classify planar maps with $m$ faces according to their ``backbone structure''. We then generate all applicable maps by planting trees on these structures, using generating series to keep track of vertex degrees and the number of leaves in every face.  Finally, we apply an algebraic filter to ensure the number of odd vertices with respect to every cycle is even and nonzero. 

The \emph{backbone} of a map is the map obtained by recursively removing all leaves. Reducing a graph to its backbone should be viewed as ``removing rooted trees'', where we work with the convention that trees are always rooted at a leaf.  This process is illustrated in Fig.~\ref{fig:backbone}.  
\begin{figure}[t]
  \centering
  \includegraphics[scale=.5]{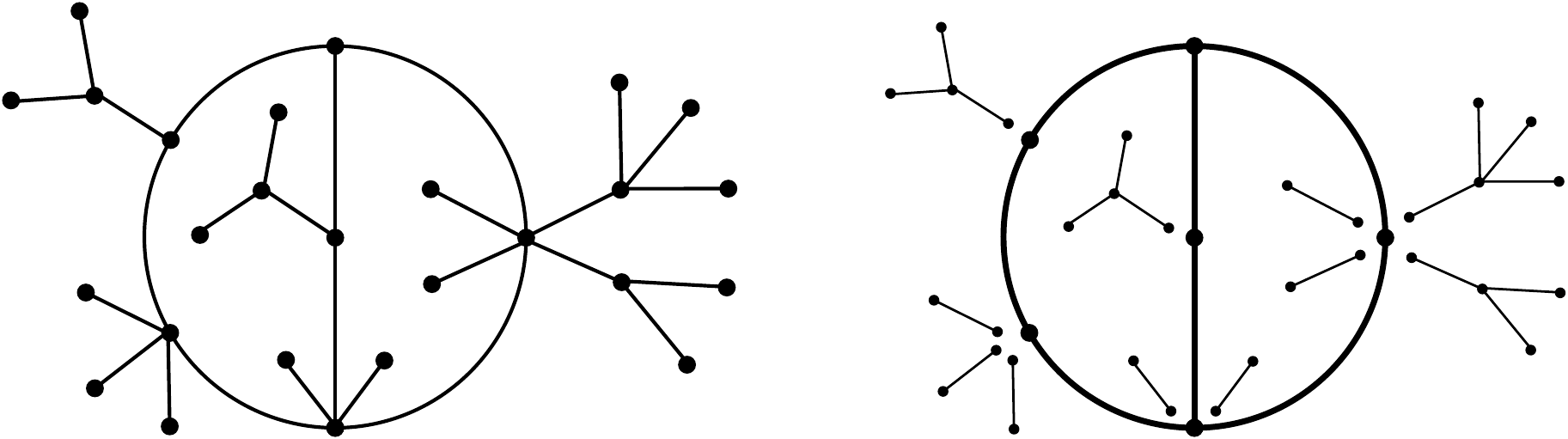}
  \caption{A graph (left) and its decomposition into its
    backbone and rooted trees (right).}
  \label{fig:backbone}
\end{figure}

The \emph{backbone structure} of a map is obtained from its backbone by iteratively removing vertices of degree 2 and merging their incident edges.  Note that this preserves the number of faces of the map.  For planar maps with  one or two faces, the backbone structures are degenerate, involving a single
vertex and a loop, respectively.  There are three possible planar backbone structures
with three faces, as depicted in Fig.~\ref{fig:structures123}.  The
backbone structure of the map in Fig.~\ref{fig:backbone} is the last
structure of Fig.~\ref{fig:structures123}.
\begin{figure}[t]
  \includegraphics[width=0.9\textwidth]{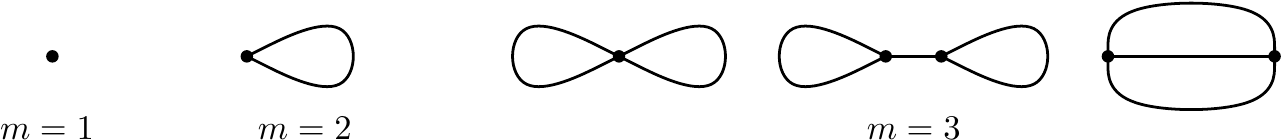}
  \caption{The possible planar backbone structures with $m=1,2$, and $3$ faces.}
  \label{fig:structures123}
\end{figure}

\subsection{Factorizations of Full Cycles ($m=1$)}

\newcommand{\lf}{s}

By Theorem~\ref{thm:even_degree}, factorizations of the full cycle
$(1\ 2\ \cdots n)$ are in one-to-one correspondence with planted plane trees  having
$2n$ leaves  in which every internal vertex has even degree $\geq 4$. 

Let $\xi(\lf, \bq)$ be the generating function for such trees, with
$\lf$ marking non-root leaves and $q_k$ counting vertices of degree $2k$. That is,
\begin{equation}
\label{eq:xi}
	\xi(\lf,\bq) = \sum \icfacts{(n)}{\b} \ \lf^{2n-1} q_2^{\b_2} q_3^{\b_3} \cdots,
\end{equation}
where the sum extends over all $n \geq 1$ and all lists $\b=(\b_2,\b_3,\ldots)$ of nonnegative integers.

The vertex $v$ incident with the root of a tree is either a leaf or a vertex of degree $2k$, for some $k \geq 2$. In the latter case,
removal of $v$ decomposes the tree into a list of $2k-1$ trees. This leads to the recursive relation
\begin{equation*}
  \xi = \lf + q_2 \xi^3 + q_3 \xi^5 + q_5 \xi^7 + \cdots,
\end{equation*}
or equivalently
\begin{equation}
\label{eq:recur_xi}
	\lf=\xi(1-\QQ(\xi^2)),
\end{equation}  
where $\QQ$ is defined as in~\eqref{eq:Q}.
Squaring~\eqref{eq:recur_xi} and comparing with~\eqref{eq:Q_xis} gives
\begin{equation}
  \label{eq:xi_to_xis}
  \xi^2 = \xis(\lf^2),
\end{equation}

To obtain case $m=1$ of Theorem~\ref{thm:Q}, we note
that \eqref{eq:icGS}, \eqref{eq:xi}, \eqref{eq:recur_xi}, and \eqref{eq:xi_to_xis} give
\begin{equation}
\label{eq:prem1}
	\lf^2 \pd{\icGS{1}}{x}\left(\lf^2\right) 
    	= \lf \xi = \xi^2(1-\QQ(\xi^2))
        =\xis(1-\QQ(\xis)).
\end{equation}
Replacing $\lf^2$ with $x_1$ in~\eqref{eq:prem1} now yields~\eqref{eq:Qm1}.
 

\subsection{Factorizations of Permutation with Two Cycles ($m=2$)}

Theorem~\ref{thm:even_degree} associates factorizations of class $(\a_1,\a_2)$ with certain two-faced planar maps. Every such map results from rooting trees on the vertices of a single cycle, and we will generate and count them in exactly this way.

Consider the generating series
\begin{equation}
\label{eq:Phi2}
	\Phi_2(\mathbf{\lf},\bq,\delta)
	= \sum_{\mathcal{M}} \frac{\lf_1^{2\a_1}}{2\a_1} \lf_2^{2\a_2} \delta^c q_2^{\b_2} q_3^{\b_3}\cdots,
\end{equation}
where the sum runs over all two-face planar maps $\mathcal{M}$ satisfying conditions (a) through (d) of Theorem~\ref{thm:even_degree}, and $\lf_i$
marks leaves in face $i$, $q_k$ marks vertices of degree
$2k$, and $\delta$ marks odd corners inside face 1.   We need not track odd corners in both faces of  2-face maps because a vertex is at an odd corner of one face if and only if it is at an odd corner of the other.

We wish to apply Theorem~\ref{thm:even_degree} to express $\icGS{2}$ in terms of $\Phi_2$. However, to enforce condition (e) of the theorem we must first remove all terms of $\Phi_2$ that are either of odd
or zero degree in $\delta$.  This filtration is accomplished  by regarding $\Phi_2$ as a power series over $\Q[\delta]$ and letting the operator
$\map{\Delta}{\Q[\delta]}{\Q}$ defined by
\begin{equation}
\label{eq:Delta}
	\Delta f(\delta) := \frac{f(\delta)+f(-\delta)}{2} \Bigg|^{\delta=1}_{\delta=0}	
			 = \tfrac{1}{2}f(1) + \tfrac{1}{2}f(-1)-f(0).
\end{equation} 
act on its coefficients.  Upon comparing~\eqref{eq:icGS} and~\eqref{eq:Phi2}, we then have
\begin{equation}
  \label{eq:phi_psi}
  \icGS{2}(\mathbf{\lf}^2,\bq) = 2\, \Delta \Phi_2(\mathbf{\lf},\bq,\delta)
\end{equation}

Let us now determine $\Phi_2$ by constructing all relevant maps.  We begin with a cycle $C$ embedded in the plane. Since the inner and
outer faces of this map are interchangeable, we may assume they have
labels 1 and 2, respectively.  To account for the circular symmetry of $C$, we shall fix one of its vertices and plant trees on the resulting rooted cycle $C'$.  

Let $\xi$ be defined as before (see~\eqref{eq:xi}), so that trees planted on $C'$ in face $i$ are recorded by  $\xi_i=\xi(x_i,\bq)$. Since the vertices of $C'$ begin with degree 2 and must have even degree $\geq 4$ after all trees are planted, a positive even number of trees must be planted at each.  If $C'$ has $k$ vertices, then all maps that can arise in this way are generated by
$
	(\ve+\delta\vo)^k,
$
where
\begin{align}
  \label{eq:vertex_2_even}
  \ve &= q_2(\xi_1^2 + \xi_2^2) 
  + q_3(\xi_1^4 + \xi_1^2\xi_2^2 + \xi_2^4) 
  + q_4(\xi_1^6 + \xi_1^4\xi_2^2 + \xi_1^2\xi_2^4 + \xi_2^6) 
  + \ldots \\
     &= \frac{\xi_1^2 Q(\xi_1^2) - \xi_2^2 Q(\xi_2^2)}{\xi_1^2 - \xi_2^2} \nonumber
\end{align}
accounts for plantings that result in an even corners inside $C'$, and
\begin{align}  
  \label{eq:vertex_2_odd}
  \vo &= q_2\xi_1 \xi_2 
  + q_3(\xi_1^3\xi_2 + \xi_1 \xi_2^3) 
  + q_4(\xi_1^5\xi_2 + \xi_1^3\xi_2^3 + \xi_1 \xi_2^5) 
  + \cdots \\
  &= \frac{\xi_1\xi_2 \big(Q(\xi_1^2) - Q(\xi_2^2)\big)}{\xi_1^2 - \xi_2^2} \nonumber
\end{align}
accounts for plantings that result in an odd corner.    Since the resulting maps have no symmetries, a leaf in face 1 can be distinguished arbitrarily. Said differently, each map with a distinguished leaf is generated  with weight $1/\ell$, where $\ell$ is the total number of leaves in face 1, in accordance with~\eqref{eq:Phi2}.
Since $C$ can have any number of vertices, we have
\begin{align}
\label{eq:phi2log}
  \Phi_2(\mathbf{\lf},\bq,\delta) 
  = \sum_{k \geq 1} \frac{1}{k} (\ve+\delta\vo)^k 
  = \log(1 - \ve - \delta\vo)^{-1},
\end{align}
where the factor $1/k$ is present to undo the rooting of  $C'$. 

From ~\eqref{eq:phi_psi}--\eqref{eq:phi2log} we obtain
\begin{align*}
\label{eq:prelim_2}
	\icGS{2}(\mathbf{\lf}^2,\bq) 
	&= \Phi_2(\mathbf{\lf},\bq,1)+\Phi_2(\mathbf{\lf},\bq,-1)-2\Phi_2(\mathbf{\lf},\bq,0)  \\
	&= \log\pr{\frac{(1-\ve)^2}{(1-(\ve+\vo))(1-(\ve-\vo))}} \\
	&= \log\pr{\frac{\pr{\xi_1^2(1-\QQ(\xi_1^2))-\xi_2^2(1-\QQ(\xi_2^2))}^2}	{(\xi_1^2-\xi_2^2)(\xi_1^2(1-\QQ(\xi_1^2))^2-\xi_2^2(1-\QQ(\xi_2^2))^2)}}.
\end{align*}
Finally,~\eqref{eq:recur_xi}, \eqref{eq:xi_to_xis} and the
substitution $\mathbf{\lf}^2 = \mathbf{x}$ yield
$$
	\icGS{2}(\mathbf{x},\bq) = 
	\log\pr{\frac{\pr{\xis_1\QS_1-\xis_2\QS_2}^2}{(\xis_1-\xis_2)(x_1-x_2)}},
$$
where $\xis$ and $S$ are defined as in Theorem~\ref{thm:Q}.   It is now routine to verify~\eqref{eq:Qm2} by differentiating the above expression and
simplifying with~\eqref{eq:xdx}.

We remark in closing that the filtration $\Delta$ does not appear in the earlier derivation~\cite{Irv_cjm09} of this same result. The graphical model of inequivalent factorizations employed there was in some sense dual to the one used here, and the analogue of condition (e) of Theorem~\ref{thm:even_degree} was hidden.  This is precisely what hindered earlier efforts to derive $\icGS{3}(\mathbf{x},\bq)$ via that model.

\subsection{Factorizations of Permutations with Three Cycles ($m=3$)}
\label{sec:factor_m3}

The three distinct backbone structures for three-face planar maps are 
shown in Fig.~\ref{fig:structures3} along with their symmetry groups.
(Note that we consider only orientation-preserving symmetries.)
\begin{figure}[t]
  \includegraphics[width=0.7\textwidth]{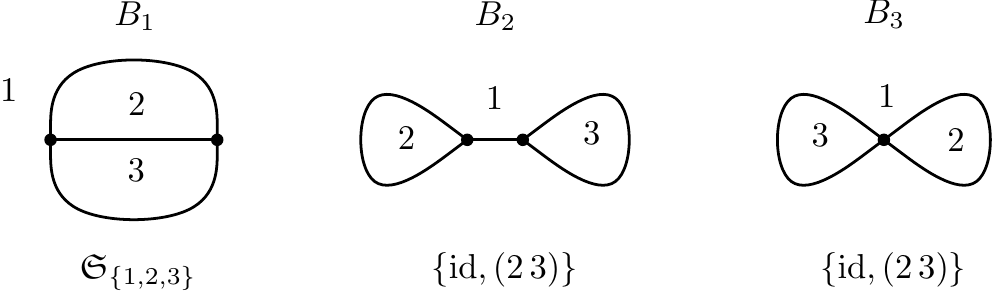}
  \caption{Planar backbone structures with three faces, along with their symmetry groups.}
  \label{fig:structures3}
\end{figure}
We shall refer to these structures as $B_1$, $B_2$, and $B_3$, as indicated in the figure.

Every factorization of a permutation with three cycles corresponds to
a map obtained by planting trees on the edges and vertices of some $B_j$.
We will generate all relevant maps in this way, initially recording the number of odd corners with respect to all boundary walks and then filtering the results to enforce
condition (e) of Theorem~\ref{thm:even_degree}.  As per our remarks following the Theorem, we will in fact generate maps \emph{without} a distinguished leaf and rely on the
$2\alpha_1\alpha_2\alpha_3$-to-one correspondence between factorizations and such maps.

\newcommand{\partialGS}[1]{\Phi_{3,#1}} 

To this end, let $\partialGS{j}$ be the generating
series for maps having backbone structure $B_j$ and satisfying conditions (a), (b), and (d) of Theorem~\ref{thm:even_degree}. As before, leaves in face $i$ will be marked by $\lf_i$, vertices of degree $2k$ by $q_k$, and $\delta_i$ will mark odd corners along the boundary of face $i$
of the backbone.

\newcommand{\edge}[2]{\epsilon(#1,#2)}
An edge $e$ of a backbone structure can support any number of vertices, each of which is at a corner of the boundary walks of the faces separated by $e$.  The contribution of any such edge bordering faces $a$ and $b$ (which may be the same) is therefore
\begin{equation}
  \label{eq:edge_contrib}
  \edge{a}{b} = \frac{1}{1-\ve(a,b)-\delta_a\delta_b \vo(a,b)},
\end{equation}
where 
\begin{align}
\label{eq:vertex_h_sums}
	\ve(a,b) = \sum_{j \geq 2} q_{j} h_{j-1}(\xi_a^2, \xi_b^2), \qquad
	\vo(a,b) = \xi_a \xi_b \sum_{j \geq 2} q_{2} h_{j-2}( \xi_a^2, \xi_b^2).
\end{align}
Note that these expressions are identical to~\eqref{eq:vertex_2_even} and~\eqref{eq:vertex_2_odd}.  

Sums of this form will arise repeatedly in what follows, and  Lemma~\ref{lem:umbral} (in the appendix) shows how they are resolved into rational expressions involving $\QQ$ and its derivatives.  For example, applying Lemma~\ref{lem:umbral} to~\eqref{eq:vertex_h_sums} and~\eqref{eq:edge_contrib} results in
\begin{equation*}
\label{eq:full_edge_contrib}
	\edge{a}{b} =
	\begin{cases}
		\frac{\xi_a^2-\xi_b^2}{\xi_a^2(1-Q_a)-\xi_b^2(1-Q_b)-\delta_a\delta_b\xi_a\xi_b(Q_a-Q_b) }
		&\text{if $a \neq b$} \\
		\frac{1}{1-Q_a-\xi_a^2Q'_a(1+\delta_a^2)}
		&\text{if $a=b$},
	\end{cases}
\end{equation*}
where $Q_i=Q(\xi_i^2)$ and $Q'_i=Q'(\xi_i^2)$.  Notice that when $\delta_a=1$ we get $\edge{a}{a} = 1/\QD(\xi_a^2)$, where $\QD$ is defined as in Theorem~\ref{thm:Q}.

Consider backbone structure $B_1$, which contains two vertices of degree three, each incident with three faces. To determine $\partialGS{1}$, we first find the contribution $\nu_3(a,b,c)$ of a general vertex of degree 3 incident with faces $a$, $b$
and $c$.  Such a vertex is required to have even degree once all trees are planted, so an odd number of trees must be planted in exactly one or three of its incident faces.  Therefore
\begin{equation}
  \label{eq:vertex3}
	\begin{split}
  \nu_3(a,b,c) &=
	e_1(\delta_a \xi_a, \delta_b \xi_b, \delta_c \xi_c) 
  \sum_{j \geq 2} q_{j} h_{j-2}(\xi_a^2,\xi_b^2,\xi_c^2) \\
  &\qquad\qquad + 
  	e_3(\delta_a \xi_a, \delta_b \xi_b, \delta_c \xi_c) 
  \sum_{j \geq 3} q_{j} h_{j-3}(\xi_a^2,\xi_b^2,\xi_c^2).
  \end{split}
\end{equation}
Taking edges into account, we have
\begin{align*}
\label{eq:partial1}
  \partialGS{1} &= \nu_3(1,2,3)\nu_3(1,3,2) \edge{1}{2}\edge{2}{3}\edge{3}{1}.
\end{align*}
Note that face labels have been assigned in only one way because of the full
symmetry group.  

Now consider structure $B_2$. We regard its two vertices of degree 3 as 
being incident with three faces, \emph{two of which are identical}, so that their analysis
is identical to that above.  Their contributions to $\partialGS{2}$
are therefore $\nu_3(1,2,1)$ and $\nu_3(1,3,1)$, where $\nu_3$ is
given by~\eqref{eq:vertex3}.  Since $B_2$ has two
symmetries, its faces may be labelled in $3!/2=3$ distinct ways.
But rather than summing over the three distinct label assignments we sum over
\emph{all} labellings and divide by the size of the symmetry group.
This gives
\begin{equation*}
\label{eq:partial2}
	\partialGS{2} = \frac{1}{2} \sum_{\{a,b,c\} = \{1,2,3\}} 
        \nu_3(a,b,a)\nu_3(a,c,a) \edge{a}{b} \edge{a}{c} \edge{a}{a},
\end{equation*}
where the summation is over all permutations of $\{1,2,3\}$.

Finally, we consider structure $B_3$, which contains only a vertex of degree 4.  Despite this vertex being incident with only 3 distinct faces, we again consider a general vertex of degree 4 incident with faces $a$, $b$, $c$ and $d$.  For such a vertex to
remain of even degree, an odd number of trees must be planted in 0, 2,
or 4 of its corners, and an even number in the rest.  The vertex contribution is therefore
\begin{multline*}
  \label{eq:vertex4}
     \nu_4(a,b,c,d) 
     =
     \sum_{j \geq 2} q_{j} h_{j-2}(\xi_a^2,\xi_b^2,\xi_c^2,\xi_d^2)  \\
     + e_2(\delta_a\xi_a, \delta_b\xi_b, \delta_c\xi_c, \delta_d\xi_d)
     \sum_{j \geq 3} q_{j} h_{j-3}(\xi_a^2,\xi_b^2,\xi_c^2,\xi_d^2)  \\
     + e_4(\delta_a\xi_a, \delta_b\xi_b, \delta_c\xi_c, \delta_d\xi_d)
     \sum_{j \geq 4} q_{j} h_{j-4}(\xi_a^2,\xi_b^2,\xi_b^2,\xi_d^2),
\end{multline*}
With the faces of $B_3$ labelled as in Fig.~\ref{fig:structures3}, it contributes $\nu_4(1,2,1,3) \edge{1}{2} \edge{1}{3}$ to $\partialGS{3}$. Accounting for symmetries gives
\begin{equation*}
\label{eq:partial3}
  \partialGS{3} = \frac{1}{2} \sum_{\{a,b,c\} = \{1,2,3\}} \nu_4(a,b,a,c) \edge{a}{b} \edge{a}{c}.
\end{equation*}

We now sum the contributions of $B_1$, $B_2$, and $B_3$, and filter the results by applying the operator $\Delta$ (defined
in~\eqref{eq:Delta}) separately to $\delta_1, \delta_2$, and
$\delta_3$.  Comparing the definitions of
$\icGS{3}$ and $\partialGS{}$ we have
\begin{equation}
\label{eq:final_result}
	\icGS{3}(\mathbf{\lf}^2,\bq) = 2 \Delta_1 \Delta_2 \Delta_3 ( \partialGS{1}+ \partialGS{2}+ \partialGS{3}).
\end{equation}
where $\Delta_i$ is the $\Delta$ operator with respect to $\delta_i$.   
Note that this is the correct filtration because every cycle in a
three-face planar map is itself the boundary of some face of the
backbone. (This is \emph{not} true for maps with more than 3
faces. See Figure~\ref{fig:additional_cycles} in Section~\ref{sec:transpositions}.)

The computations are clearly too elaborate to carry out by hand, so we have relied extensively on Maple to evaluate and simplify~\eqref{eq:final_result}.   With considerable human guidance, Maple confirms the result to be the surprisingly compact expression~\eqref{eq:Qm3}.  

We cannot yet satisfactorily explain the dramatic simplification of~\eqref{eq:final_result}.  None of the individual backbone contributions $\partialGS{1},\partialGS{2}$, or $\partialGS{3}$ simplify in any appreciable way (either before or after filtration).  Given the connections surveyed in Sections~\ref{sec:survey_connections} and~\ref{sec:survey_cyclefacts}, we interpret this global simplification as strong evidence that transitive factorizations (in general) possess a rich  unknown structure.  




\section{Enumeration of Inequivalent $k$-cycle Factorizations}
\label{sec:transpositions}

In this section we consider the restriction of Theorem~\ref{thm:Q} to $k$-cycle factorizations and, even more specifically, to factorizations into transpositions.   In the latter case we shall also describe how a specialization of our main graphical correspondence has been used to count factorizations of permutations containing four cycles.

\subsection{Specializations}

Theorem~\ref{thm:Q} is readily specialized to obtain generating series for inequivalent $k$-cycle factorizations of permutations with up to three cycles. Upon substituting $q_k=1$ and $q_i =0$ for $i \neq k$ throughout the theorem we arrive at the following result.  Simplification to the forms below is straightforward in cases $m=1$ and $m=2$, but relies on Lemmas~\ref{lem:3case_simplification} and~\ref{lem:det} of the Appendix in case $m=3$.  

\begin{cor}
\label{cor:kcycles}
Let $\xis \in \Q[[x]]$ be the unique solution of
$
	\xis = x(1-\xis^{k-1})^{-2},
$
namely
\begin{align*}
	\xis(x) = \sum_{i \geq 0} \frac{1}{1+i(k-1)} \binom{1+i(2k-1)}{i} x^{1+i(k-1)},
\end{align*}
and let $\xis_i=\xis_i(x_i)$.  Then
\begin{align*}
	\Dopx \ikGS{1}{k} &= \xis_1(1-\xis_1) \\
	\Dopx \ikGS{2}{k}
	&= \frac{2(k-1) \xis_1 \xis_2 \hsf{k-2}(\xis_1,\xis_2)^2} {\prod_i (1-(2k-1)\xis_i^{k-1}) \pr{1-\hsf{k-1}(\xis_1,\xis_2)} } \\
	\ikGS{3}{k}
	&=
	\frac{2\xis_1 \xis_2 \xis_3 G(G+G')}
	{\prod_{i} (1-(2k-1)\xis_i^{k-1}) \prod_{i < j} (1-\hsf{k-1}(\xis_i,\xis_j))},
\end{align*}
where in the formula for $\ikGS{3}{k}$ we have let 
\begin{align*}
G &= s_{(k-3)} - (2k-1)s_{(k-2)^2} \\
G' &= s_{(k-2)^2}-(2k-1)s_{(2k-3,k-2)}
\end{align*}
with $s_{\l} \equiv s_{\l}(\xis_1,\xis_2,\xis_3)$. \qed
\end{cor}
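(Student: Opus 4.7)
The plan is to deduce Corollary~\ref{cor:kcycles} by specializing Theorem~\ref{thm:Q} via the substitution $q_k=1$, $q_i=0$ for $i\neq k$, and then simplifying the resulting rational expressions. Under this substitution, the defining series from~\eqref{eq:Q} collapses to $\QQ(z)=z^{k-1}$, so $\QQ'(z)=(k-1)z^{k-2}$, and consequently
\[
\QS(\xis)=1-\xis^{k-1}, \qquad \QD(\xis)=1-(2k-1)\xis^{k-1}.
\]
Equation~\eqref{eq:Q_xis} then becomes $\xis=x(1-\xis^{k-1})^{-2}$, and Lagrange inversion applied to this functional equation yields the explicit coefficients of $\xis$ claimed in the statement.

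For $m=1$ the stated formula follows by direct substitution into~\eqref{eq:Qm1}. For $m=2$ the crucial ingredients are the polynomial factorizations
\[
\QQ_i-\QQ_j=(\xis_i-\xis_j)\hsf{k-2}(\xis_i,\xis_j),\qquad \xis_i\QS_i-\xis_j\QS_j=(\xis_i-\xis_j)\bigl(1-\hsf{k-1}(\xis_i,\xis_j)\bigr),
\]
together with a short computation showing
\[
\QS_1\QD_2-\QS_2\QD_1=2(k-1)(\xis_1-\xis_2)\hsf{k-2}(\xis_1,\xis_2).
\]
Substituting these into~\eqref{eq:Qm2} and cancelling the common factor of $(\xis_1-\xis_2)^2$ produces the stated closed form for $\Dopx\ikGS{2}{k}$.

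The bulk of the work is the $m=3$ case. The same two factorizations convert~\eqref{eq:Qm3} into
\[
\icGS{3}=2\xis_1\xis_2\xis_3\sum_{i=1}^{3}\frac{1}{1-(2k-1)\xis_i^{k-1}}\prod_{j\neq i}\frac{\hsf{k-2}(\xis_i,\xis_j)}{1-\hsf{k-1}(\xis_i,\xis_j)}.
\]
Clearing denominators against $\prod_i(1-(2k-1)\xis_i^{k-1})\prod_{i<j}(1-\hsf{k-1}(\xis_i,\xis_j))$ collapses the three-term sum into a single rational expression, and the task reduces to showing that the resulting numerator agrees with $2\xis_1\xis_2\xis_3\,G(G+G')$ for the $G,G'$ defined in the statement. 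I would expand each of the three summands, group terms by their total degree and symmetry type, and then invoke Lemmas~\ref{lem:3case_simplification} and~\ref{lem:det} of the Appendix to identify the resulting symmetric-function combinations with the Schur polynomials $\ssf{(k-3)}$, $\ssf{(k-2)^2}$, and $\ssf{(2k-3,k-2)}$ in $\xis_1,\xis_2,\xis_3$.

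The principal obstacle is exactly this last recombination step: one must collapse a sum of three manifestly asymmetric rational terms into a single symmetric expression written in terms of a small list of Schur polynomials. This is the point of Lemmas~\ref{lem:3case_simplification} and~\ref{lem:det}, which presumably recognise the relevant alternants as Vandermonde-quotient expressions admitting Schur expansion; absent those lemmas, one would be forced into a direct and unilluminating partial-fraction analysis, so their use is what makes the final identification tractable.
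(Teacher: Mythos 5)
Your overall approach matches the paper exactly: the paper's entire proof is the observation that setting $q_k=1$, $q_i=0$ for $i\neq k$ in Theorem~\ref{thm:Q} gives the result, with simplification being routine for $m=1,2$ and relying on Lemmas~\ref{lem:3case_simplification} and~\ref{lem:det} for $m=3$. Your computations of $\QS,\QD$, the Lagrange-inversion coefficients, the two key factorizations $\QQ_i-\QQ_j=(\xis_i-\xis_j)\hsf{k-2}(\xis_i,\xis_j)$ and $\xis_i\QS_i-\xis_j\QS_j=(\xis_i-\xis_j)\bigl(1-\hsf{k-1}(\xis_i,\xis_j)\bigr)$, and the $m=2$ identity $\QS_1\QD_2-\QS_2\QD_1=2(k-1)(\xis_1-\xis_2)\hsf{k-2}(\xis_1,\xis_2)$ are all correct.

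There is, however, an algebraic slip in your intermediate $m=3$ expression. Substituting the two factorizations into~\eqref{eq:Qm3} leaves a residual $(\xis_i-\xis_j)$ in each denominator: since both $\QQ_i-\QQ_j$ and $\xis_i\QS_i-\xis_j\QS_j$ carry one factor of $(\xis_i-\xis_j)$ while the denominator of~\eqref{eq:Qm3} has $(\xis_i-\xis_j)^2$, only one such factor cancels. The correct intermediate form is
\[
\icGS{3}=2\xis_1\xis_2\xis_3\sum_{i=1}^{3}\frac{1}{\QD_i}\prod_{j\neq i}\frac{\hsf{k-2}(\xis_i,\xis_j)}{(\xis_i-\xis_j)\bigl(1-\hsf{k-1}(\xis_i,\xis_j)\bigr)},
\]
whereas you omitted the $(\xis_i-\xis_j)$. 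This matters: the surviving Vandermonde-quotient $\prod_{j\neq i}(\xis_i-\xis_j)^{-1}$ is precisely what puts the sum into the alternant shape that Lemma~\ref{lem:3case_simplification} addresses, and without it the ``clearing denominators against the symmetric product'' step you describe would not leave an alternating numerator to which Lemma~\ref{lem:det} applies. Your later prose does acknowledge the asymmetric-to-symmetric collapse and the role of alternants, so the misstep appears to be local rather than conceptual, but as written the displayed formula would derail the calculation it is meant to initiate.
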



Corollary~\ref{cor:kcycles} simplifies considerably when further specialized at $k=2$. The result is the following generating series for inequivalent minimal transitive factorizations into transpositions. Although it is not immediately obvious,~\eqref{eq:gouldenjackson} is indeed equivalent to the form of $\ikGS{2}{2}$ given here.
\begin{cor}
\label{cor:transpositions}
Let $\xis \in \Q[[x]]$ be the unique solution of $\xis = x(1-\xis)^{-2}$, namely
\begin{equation*}
	\xis(x) = \sum_{n \geq 1} \frac{1}{n} \binom{3n-2}{n-1} x^n.
\end{equation*}
Letting $\xis_i = \xis(x_i)$, we have
\begin{equation*}
  \label{eq:transposition_gs}
   \begin{split}
	\Dopx \ikGS{1}{2} 
		&= \xis_1(1-\xis_1) \\
	\Dopx \ikGS{2}{2} 
		&= \frac{2 \xis_1 \xis_2} {(1-3\xis_1)(1-3\xis_2)(1-\xis_1-\xis_2)} \\
	\ikGS{3}{2}
  		&= \frac{ 6 \xis_1 \xis_2 \xis_3 (4 - 3\xis_1 - 3\xis_2 - 3\xis_3)}{(1-3\xis_1) 		
  			(1-3\xis_2) (1-3\xis_3) (1-\xis_1-\xis_2) (1-\xis_2-\xis_3) (1-\xis_1-\xis_3)}.
    \end{split}
\end{equation*}\qed
\end{cor}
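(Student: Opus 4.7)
The plan is to derive each of the three formulae by specializing Theorem~\ref{thm:Q} at $q_2=1$ and $q_k=0$ for $k\neq 2$. Under this substitution one reads off $\QQ(z)=z$, $\QQ'(z)=1$, $\QS(\xis)=1-\xis$, and $\QD(\xis)=1-3\xis$, while the functional equation~\eqref{eq:Q_xis} becomes $\xis=x(1-\xis)^{-2}$. Inverting the relation $x=\xis(1-\xis)^2$ by Lagrange's formula immediately yields the stated power-series expansion of $\xis(x)$.

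For $m=1$, formula~\eqref{eq:Qm1} gives $\Dopx\ikGS{1}{2} = \xis_1\QS_1 = \xis_1(1-\xis_1)$ at once. For $m=2$, direct substitution into~\eqref{eq:Qm2} relies on the elementary identities $\QQ_1-\QQ_2=\xis_1-\xis_2$, $\xis_1\QS_1-\xis_2\QS_2=(\xis_1-\xis_2)(1-\xis_1-\xis_2)$, and
\begin{equation*}
\frac{\QS_1}{\QD_1}-\frac{\QS_2}{\QD_2}=\frac{(1-\xis_1)(1-3\xis_2)-(1-\xis_2)(1-3\xis_1)}{(1-3\xis_1)(1-3\xis_2)}=\frac{2(\xis_1-\xis_2)}{(1-3\xis_1)(1-3\xis_2)};
\end{equation*}
multiplying these three factors together and cancelling $(\xis_1-\xis_2)^2$ delivers the claim.

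For $m=3$, the same substitutions in~\eqref{eq:Qm3} cancel a factor $\xis_i-\xis_j$ from every term in the sum, leaving
\begin{equation*}
\ikGS{3}{2}=2\xis_1\xis_2\xis_3\sum_{i=1}^{3}\frac{1}{1-3\xis_i}\prod_{j\neq i}\frac{1}{(\xis_i-\xis_j)(1-\xis_i-\xis_j)}.
\end{equation*}
Matching the claimed closed form then reduces to the rational identity
\begin{equation*}
\sum_{i=1}^{3}\frac{1}{1-3\xis_i}\prod_{j\neq i}\frac{1}{(\xis_i-\xis_j)(1-\xis_i-\xis_j)}=\frac{3(4-3\xis_1-3\xis_2-3\xis_3)}{\prod_i(1-3\xis_i)\prod_{i<j}(1-\xis_i-\xis_j)}.
\end{equation*}

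The main obstacle is this last identity. My plan is to multiply both sides by the right-hand denominator together with the Vandermonde $(\xis_1-\xis_2)(\xis_1-\xis_3)(\xis_2-\xis_3)$, converting the problem into a polynomial identity of total degree $4$. Both sides of that polynomial identity are antisymmetric under any transposition of the $\xis_i$, so both are divisible by the Vandermonde; after cancelling it, the problem reduces to a symmetric polynomial identity of degree $1$ in three variables. The space of such polynomials is two-dimensional, spanned by $1$ and $\xis_1+\xis_2+\xis_3$, so the identity will follow from numerical verification at two judicious specializations, for instance $(\xis_1,\xis_2,\xis_3)=(t,-t,0)$ and $(1,2,3)$. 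Alternatively one may obtain the $m=3$ formula by specializing the $G(G+G')$ expression of Corollary~\ref{cor:kcycles} at $k=2$ using the Schur-polynomial evaluations $s_{(-1)}=0$, $s_{(0,0)}=1$, and $s_{(1,0)}=\xis_1+\xis_2+\xis_3$, though the direct route described above seems cleanest.
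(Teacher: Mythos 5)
Your proposal is correct, and for $m=1$ and $m=2$ it coincides with what the paper has in mind (direct specialization of Theorem~\ref{thm:Q} at $q_2=1$, $q_i=0$ for $i\neq 2$, using $\QQ(z)=z$, $\QS(\xis)=1-\xis$, $\QD(\xis)=1-3\xis$). For $m=3$, however, you take a genuinely different route. The paper's intended path is Theorem~\ref{thm:Q} $\to$ Corollary~\ref{cor:kcycles} (the generic-$k$ form, whose simplification for $m=3$ invokes Lemmas~\ref{lem:3case_simplification} and~\ref{lem:det} of the Appendix, i.e.\ the Schur-polynomial determinant machinery) $\to$ evaluate at $k=2$. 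You instead specialize directly to $k=2$ in~\eqref{eq:Qm3} and are left with the rational identity
\begin{equation*}
\sum_{i=1}^{3}\frac{1}{1-3\xis_i}\prod_{j\neq i}\frac{1}{(\xis_i-\xis_j)(1-\xis_i-\xis_j)}
=\frac{3\bigl(4-3\xis_1-3\xis_2-3\xis_3\bigr)}{\prod_i(1-3\xis_i)\prod_{i<j}(1-\xis_i-\xis_j)},
\end{equation*}
which you settle by clearing denominators (including the Vandermonde), observing that both sides become antisymmetric polynomials of total degree $4$, cancelling the Vandermonde to reduce to a degree-one symmetric polynomial identity, and then checking two judicious points. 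This is sound: the cleared left side is $\sum_{\text{cyc}}(1-3\xis_2)(1-3\xis_3)(1-\xis_2-\xis_3)(\xis_2-\xis_3)$ (with appropriate signs), and I verified that both sides evaluate to $-24t^3$ at $(t,-t,0)$ and to $84$ at $(1,2,3)$, which pins down the degree-one symmetric quotient. Your route is self-contained and avoids the appendix lemmas entirely, which is arguably more transparent. One caution about the closing sentence: under the paper's convention the evaluations $s_{(-1)}=0$, $s_{(0,0)}=1$, $s_{(1,0)}=\xis_1+\xis_2+\xis_3$ give $G=-3$, $G'=1-3(\xis_1+\xis_2+\xis_3)$, hence $G(G+G')=6+9(\xis_1+\xis_2+\xis_3)$, which does \emph{not} equal the required $3\bigl(4-3(\xis_1+\xis_2+\xis_3)\bigr)=12-9(\xis_1+\xis_2+\xis_3)$; this points to a sign issue in the printed expression for $G$ in Corollary~\ref{cor:kcycles} (taking $G=s_{(k-3)}+(2k-1)s_{(k-2)^2}$ reconciles the two). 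So the alternative route you float would need that correction first; fortunately it is irrelevant to your main, complete argument.
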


It is cumbersome to extract coefficients from the series $\ikGS{m}{2}$ in the forms given above.  However, a change of variables makes this task more palatable.  Set $g = \xis/(1-\xis)$, or equivalently $\xis = g/(1+g)$, so that the defining equation $\xis=x(1-\xis)^2$ becomes\footnote{Comparison with~\eqref{eq:longyear} identifies $1+g$ with Longyear's series $h$.}
\begin{equation*}
\label{eq:gseries}
	g = x(1+g)^3.
\end{equation*}
It is then easy to verify that
$$	
		\frac{1}{1-3\xis_i} = \frac{x_i}{g_i} \pd{g_i}{x_i} 
	\qquad\text{and}\qquad
		\frac{\xis_i\xis_j}{1-\xis_i-\xis_j} = \frac{g_i g_j}{1-g_i g_j},
$$
where $\xis_i=\xis(x_i)$ and $g_i = g(x_i)$.
Thus, for instance, Corollary~\ref{cor:transpositions} gives
\begin{align*}
	D_2 \ikGS{2}{2}(\bx) 
	&= 2 x_1 \pd{g_1}{x_1} x_2 \pd{g_2}{x_2}\, \frac{1}{1-g_1 g_2}.
\end{align*}
Lagrange inversion is now readily applied to extract the coefficient of $x_1^n x_2^m$ on the right-hand side. The result is equation~\eqref{eq:coeffextract}, given in the introduction, for the number of inequivalent minimal transitive 2-cycle factorizations of any permutation of cycle type $(n,m)$.  A similar but substantially more complicated expression can be derived for the coefficients of $\ikGS{3}{2}$.

\subsection{Factorizations into Transpositions of Permutations with Four Cycles ($m=4$)}
\label{sec:4case_transposition}
There are several significant difficulties associated with applying
our graphical approach to factorizations of permutations with more than three cycles.  The most immediate obstacle is that the number of distinct backbone structures increases very rapidly with the number of faces.  A more subtle difficulty is that one can no longer guarantee condition (e) of Theorem~\ref{thm:even_degree} simply by verifying it on the boundary walks of faces; indeed,  cycles are  \emph{not} necessarily face boundaries, as is clear in Figure~\ref{fig:additional_cycles}.

However, the situation is somewhat simpler if we restrict our attention to
2-cycle factorizations. The internal vertices of the corresponding maps
are then required to have degree 4, which imposes some simplifying
restrictions on the backbones and the contributions of their
components.  This has allowed us to derive the following expression for $\ikGS{4}{2}$, again by generating all corresponding 4-face planar maps according to Theorem~\ref{thm:even_degree}. 

We now briefly describe our derivation of Theorem~\ref{thm:transpositions}.  The relevant backbone structures are shown in Fig.~\ref{fig:structures4}, along with the generators and sizes of their symmetry groups. 
\begin{figure}[t]
  \centering
  \includegraphics[scale=0.9]{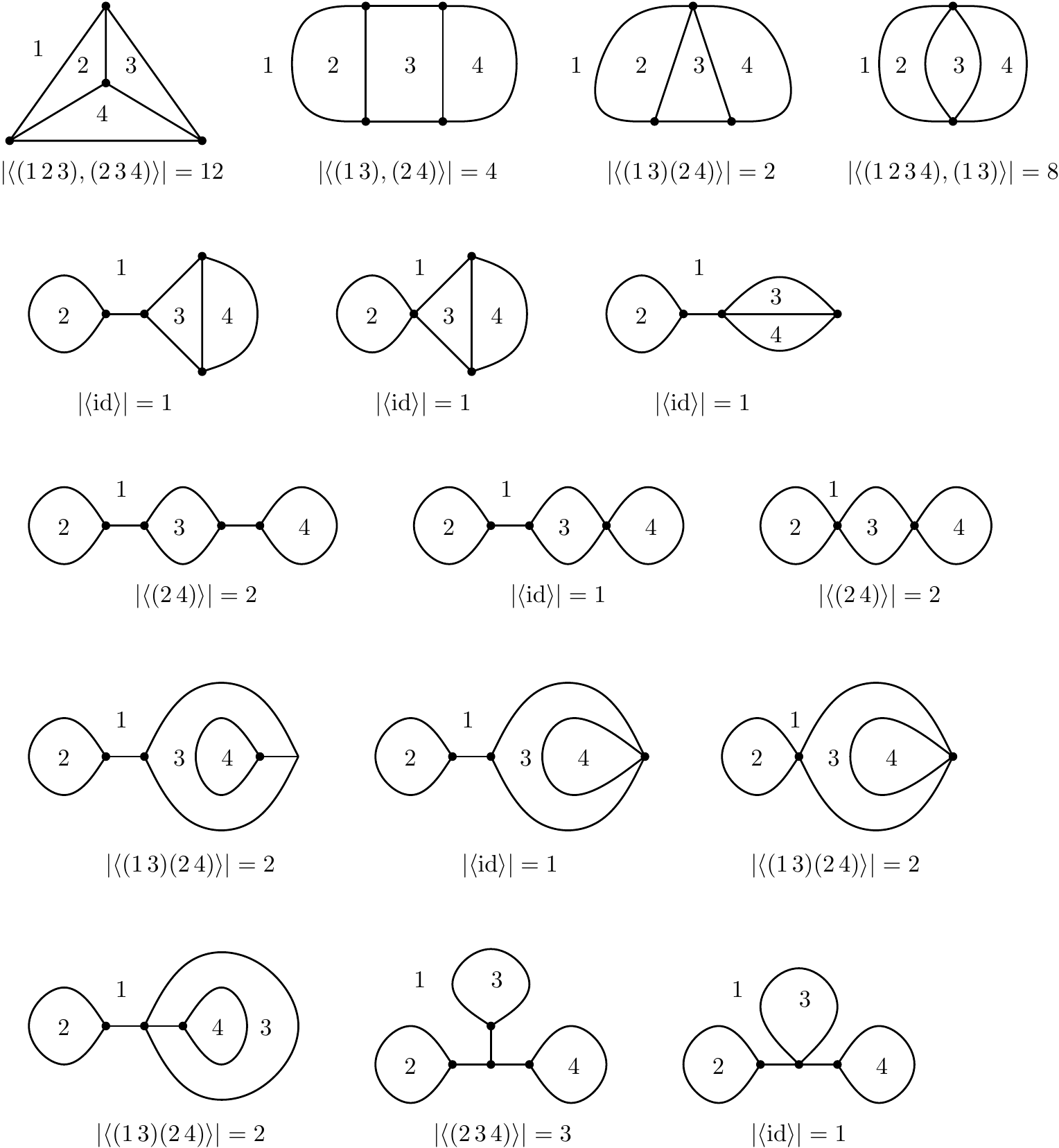}
  \caption{Backbone structures corresponding to 2-cycle factorizations of a permutation with 4 cycles, along with the generators and sizes of their symmetry groups.}
 \label{fig:structures4}
\end{figure}
As mentioned above, we must enforce condition (e) of Theorem~\ref{thm:even_degree} on more than just the boundaries of faces.  For example, Fig.\ref{fig:additional_cycles} shows the extra cycles that must be verified for the first structure of Fig.~\ref{fig:structures4}.
It can be checked that three is the maximal number of  additional
cycles one needs to consider for planar maps with four faces.
\begin{figure}[t]
  \centering
  \includegraphics{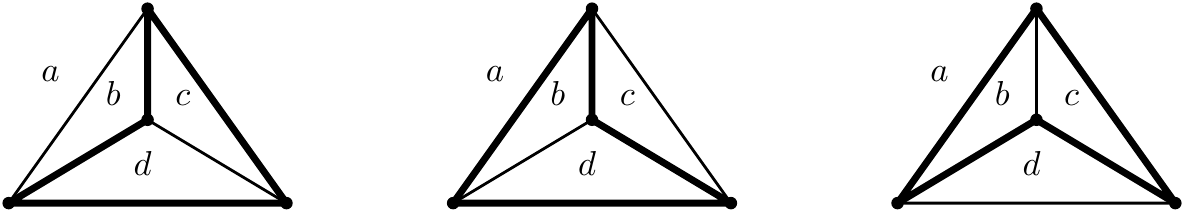}
  \caption{Additional cycles to be checked for compliance with condition (e) of
   Theorem~\ref{thm:even_degree}.  From left to right, we refer to these cycles as
    $ab$, $ac$ and $ad$, since they are sums of the
    boundary walks of the named faces.}
  \label{fig:additional_cycles}
\end{figure}
The contributions of the edges and
vertices of each backbone must 
account for these additional cycles.   

For instance, the contribution of the top left edge of the
structure shown in Fig.~\ref{fig:additional_cycles} is
\begin{equation*}
  \edge{a}{b} = \frac{1}{1-\ve(a,b)-\delta_a\delta_b\delta_{ac}\delta_{ad} \vo(a,b)}.
\end{equation*}
This should be compared with~\eqref{eq:edge_contrib}.  The
four $\delta$ factors arise because the edge lies on the boundaries of 
faces $a$ and $b$ and also on the additional cycles
$ac$ and $ad$.  Furthermore, because each vertex added to
this edge must have degree four, the vertex contributions~\eqref{eq:vertex_h_sums} simplify to
\begin{align*}
  \label{eq:even_vertex4}
  \ve(a,b) = \xi_a^2 + \xi_b^2,\qquad
  \vo(a,b) = \xi_a\xi_b.
\end{align*}
Note that we have suppressed the indeterminate $q_2$ as it is redundant.

The contribution of each vertex of a backbone structure is 
also greatly simplified by the fact that all vertices must end up with degree 4.
Consider, for example, the the top vertex of the
structure in Fig.~\ref{fig:additional_cycles}.  Since this vertex has degree 3, it must support exactly one tree.  This tree can lie in any of the three incident faces, so its contribution is
\begin{equation*}
  \nu_3 = \delta_a\delta_{ad}\xi_a +\delta_b\delta_{ac}\xi_b
  +\delta_c\delta_{ab}\xi_c.
\end{equation*}
The top vertex of the third diagram in Fig.~\ref{fig:structures4} has degree 4, so it cannot support any trees; thus its contribution is
\begin{equation*}
  \nu_4 = \delta_{12} \delta_{14},
\end{equation*}
since it is an odd vertex with respect to the cycles formed by adding
boundary walks of  faces $1$, $2$ and faces $1$, $4$, correspondingly.

In a similar manner we find the contribution of each
diagram in Fig.~\ref{fig:structures4}. Upon summing the results, filtering, and simplifying (with the aid of Maple), we arrive at Theorem~\ref{thm:transpositions}. 
This expression for $\ikGS{4}{2}$ is by far the simplest we have found, but it was only discovered by first conjecturing the general form and then guiding Maple to simplify toward such a result. We therefore caution that it is by no means clear it is a \emph{natural} form for the series.  It is best considered a hard won data point in our attempt to uncover the general structure of inequivalent factorizations. 

In principle, it is possible to formalize our derivation of $\ikGS{4}{2}$ and ``automate'' the computation of $\ikGS{m}{2}$ for $m > 4$.  This process would begin with parametrizing the possible backbone structures, say using Tutte's axiomatization via triples of permutations~\cite{Tutte_GraphTheory,JacVis_Atlas}.  However, our  experience suggests that the benefit would be very limited due to the rapidly increasing complexity (see  also~\cite{BerKui_jmp13b}) and consequent inability to effectively simplify the results. Even simplifying $\ikGS{4}{2}$ to the relatively compact form of Theorem~\ref{thm:transpositions} was a  considerable undertaking.

\section{Acknowledgements}

JI would like to thank Guillaume Chapuy, Dominique Poulalhon, and
Gilles Schaeffer for interesting and helpful discussions during the
preparation of this paper.  The work of GB was supported in part by the NSF DMS Grant 0907968.

\appendix

\section{Technical Lemmas}
\label{sec:technical}

Throughout, we let $V(\bx) = \prod_{i < j} (x_i-x_j)$ be the
Vandermonde in the indeterminates $\bx = (x_1,x_2,\ldots,x_m)$, and we write
$\lVert A \rVert$ for the determinant of a matrix $A$.

\begin{lem}
\label{lem:3case_simplification}
For indeterminates $\mathbf{a}=(a_1,a_2,a_3), \mathbf{b}=(b_1,b_2,b_3)$, and $\mathbf{z}=(z_1,z_2,z_3)$,
\begin{equation*}
	\sum_{i = 1}^3 \frac{1}{z_i} \prod_{j \neq i} \frac{z_i-z_j}{(a_i-a_j)(b_i-b_j)}
	= \frac{1}{z_1 z_2 z_3 V(\mathbf{a})V(\mathbf{b})}
			{\begin{Vmatrix} 
			a_1 z_1 & a_2 z_2 & a_3 z_3 \\
			z_1 & z_2 & z_3 \\
			1 & 1 & 1
			\end{Vmatrix}} 
			\begin{Vmatrix}
			b_1 z_1 & b_2 z_2 & b_3 z_3 \\
			z_1 & z_2 & z_3 \\
			1 & 1 & 1
			\end{Vmatrix}.
\end{equation*}
\end{lem}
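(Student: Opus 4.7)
The strategy is to multiply both sides by $z_1 z_2 z_3 V(\mathbf{a}) V(\mathbf{b})$ to clear denominators, thereby reducing the claim to a polynomial identity in the nine indeterminates $a_i, b_i, z_i$. Using the identity $V(\mathbf{a})/\prod_{\ell \neq i}(a_i - a_\ell) = (-1)^{i-1}(a_j - a_k)$ (with $\{j,k\} = \{1,2,3\} \setminus \{i\}$ and $j < k$), and likewise for $\mathbf{b}$, the two sign factors $(-1)^{i-1}$ cancel in each summand, so the claim becomes
$$\sum_{i=1}^3 z_j z_k (z_i - z_j)(z_i - z_k)(a_j - a_k)(b_j - b_k) \;=\; D_a \cdot D_b. \qquad (\star)$$

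The next step is to expand $D_a$ by cofactor expansion along its first row, yielding the compact cyclic form
$$D_a = a_1 z_1 (z_2 - z_3) + a_2 z_2 (z_3 - z_1) + a_3 z_3 (z_1 - z_2),$$
and similarly for $D_b$. Both sides of $(\star)$ are manifestly multilinear in the triples $(a_1, a_2, a_3)$ and $(b_1, b_2, b_3)$, so it is enough to compare coefficients of $a_p b_q$ for each of the nine pairs $(p, q) \in \{1,2,3\}^2$. From the expansion of $D_a$ and $D_b$, the coefficient of $a_p b_q$ on the right is immediately $z_p z_q (z_{p+1} - z_{p+2})(z_{q+1} - z_{q+2})$, indices read cyclically modulo $3$. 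On the left, the factor $(a_j - a_k)(b_j - b_k)$ contributes a nonzero coefficient of $a_p b_q$ only for those indices $i$ with $p, q \in \{j, k\}$, producing a sum of at most two terms.

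Both sides of $(\star)$ are visibly invariant under the diagonal $S_3$-action on $(a_i, b_i, z_i)$ (the left side is a symmetric sum, while for the right side, swapping two indices reverses the sign of each of $D_a$, $D_b$, $V(\mathbf{a})$, $V(\mathbf{b})$ in pairs). Consequently, the $S_3$-orbits on $\{1,2,3\}^2$ reduce the nine coefficient comparisons to just two cases: the diagonal case $(p,q)=(1,1)$ and the off-diagonal case $(p,q)=(1,2)$. The diagonal case reduces to the elementary factorization
$$z_1 z_3 (z_2 - z_1)(z_2 - z_3) + z_1 z_2 (z_3 - z_1)(z_3 - z_2) = z_1(z_2 - z_3)\bigl[z_3(z_2 - z_1) - z_2(z_3 - z_1)\bigr] = z_1^2(z_2-z_3)^2,$$
matching $z_1(z_2-z_3) \cdot z_1(z_2-z_3)$ from the right. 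The off-diagonal case is handled by the single contribution from $i = 3$, yielding $-z_1 z_2 (z_3-z_1)(z_3-z_2)$, which equals $z_1 z_2 (z_2-z_3)(z_3-z_1)$, matching the right-hand coefficient.

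The proof is conceptually routine; the main nuisance is the careful bookkeeping of signs arising from the Vandermonde quotients and the cyclic cofactors of $D_a$ and $D_b$. One might hope for a more conceptual derivation exploiting the fact that the matrices defining $D_a$ and $D_b$ share their last two rows (suggesting a Cauchy-Binet or Laplace-expansion-style argument), but I do not see a cleaner route than the coefficient comparison outlined above, and the latter requires only the verification of two short polynomial identities once the symmetry reduction is made.
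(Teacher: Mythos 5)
Your proof is correct, and it is essentially the paper's approach: the paper's proof is the single phrase ``Direct expansion,'' and what you have written is a well-organized realization of exactly that, with the bilinearity-plus-symmetry reduction serving as a tidy way to cut the expansion down to two coefficient checks.
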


\begin{proof}
Direct expansion.
\end{proof}

\begin{lem}
\label{lem:det}
For any positive integers $p > q$ we have
\begin{equation*}
		\frac{1}{V(\bx)}
		\begin{Vmatrix}
			x_1^p & x_2^p & \cdots & x_m^p \\
			x_1^q & x_2^q & \cdots & x_m^q \\
			x_1^{m-3} & x_2^{m-3} & \cdots & x_m^{m-3} \\
			x_1^{m-4} & x_2^{m-4} & \cdots & x_m^{m-4} \\
			\vdots & \vdots & \ddots & \vdots \\
			1 & 1 & \cdots & 1
		\end{Vmatrix}
		=  \ssf{(p+1-m,q+2-m)}(\bx).
\end{equation*}
\end{lem}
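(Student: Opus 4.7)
The identity is a direct instance of the bialternant (Jacobi) formula for Schur polynomials, so the plan is simply to match the given determinant to the numerator of that formula. Recall that for any partition $\lambda=(\lambda_1 \geq \lambda_2 \geq \cdots \geq \lambda_m \geq 0)$ with at most $m$ parts,
$$\ssf{\lambda}(\bx) = \frac{\det\!\pr{x_j^{\lambda_i + m - i}}_{1 \le i,j \le m}}{V(\bx)},$$
where $V(\bx) = \det(x_j^{m-i})$.

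The first step is to take $\lambda = (p+1-m,\, q+2-m,\, 0,\, 0,\, \ldots,\, 0)$ and compute the exponents $\lambda_i + m - i$. These work out to $p$ in row~1, $q$ in row~2, and then $m-3, m-4, \ldots, 1, 0$ in rows $3,4,\ldots,m-1,m$, exactly reproducing the matrix on the left-hand side. Thus, as soon as $\lambda$ is a genuine partition, the bialternant formula yields the claim.

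The remaining step is to handle the paper's convention that $\ssf{\lambda}(\bx)=0$ when $\lambda$ is not a partition. The condition $p>q$ guarantees $\lambda_1 \geq \lambda_2$, so the only way $\lambda$ can fail to be a partition is if $\lambda_2 = q+2-m < 0$ or $\lambda_1 = p+1-m < 0$. In either case, the corresponding exponent ($q$ or $p$) coincides with one of the lower exponents $m-3, m-4, \ldots, 0$, so the matrix has two equal rows and its determinant vanishes, in agreement with the vanishing convention for~$\ssf{\lambda}$.

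Since both sides reduce to the bialternant ratio (or both vanish), the identity holds. There is no real obstacle: once the exponent bookkeeping is checked, the lemma is immediate from the Weyl character formula.
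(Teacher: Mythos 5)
Your proof is correct and is essentially the paper's one-line observation that this is the classical (Jacobi bialternant) definition of the Schur polynomial, with the vanishing convention checked explicitly. One small imprecision in the vanishing case: if $\lambda_1 = p+1-m < 0$ then $p$ can equal $m-2$, which does \emph{not} appear among the lower exponents $m-3,\ldots,0$; however $\lambda_1 < 0$ forces $\lambda_2 \le \lambda_1 < 0$ as well, so the $q$-row already duplicates a lower row and the determinant still vanishes.
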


\begin{proof}
This is the classical definition of the Schur polynomial.
\end{proof}

\begin{lem}
\label{lem:umbral}
Let $A(t) = \sum_{i \geq d} a_i t^{i-1} \in \C[[t]]$.  For any positive integer $m$, and for any integer $s \geq 1-d$,
we have
\begin{equation*}
	\sum_{i \geq d} a_{i} h_{i-m+s} (x_1,\ldots,x_m) = \sum_{i = 1}^m \frac{x_i^s A(x_i)}{\prod_{j \neq i} (x_i-x_j)}.
\end{equation*}
Moreover, for $s \geq 2-d$ we have the following evaluation at $x_m=x_1$:
\begin{equation*}
	\sum_{i \geq d} a_{i} h_{i-m+s} (x_1,\ldots,x_{m-1},x_1) = 
		\pd{}{x_1} \sum_{i = 1}^{m-1} \frac{x_i^{s} A(x_i)}{\prod_{j \neq i} (x_i-x_j)}.
\end{equation*}
\end{lem}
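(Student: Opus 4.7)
The proof rests on the classical partial-fraction decomposition
$$\prod_{i=1}^{m}\frac{1}{1-x_i t}=\sum_{i=1}^{m}\frac{x_i^{m-1}}{\prod_{j\neq i}(x_i-x_j)}\cdot\frac{1}{1-x_i t},$$
which, upon extracting the coefficient of $t^{n}$, yields the Lagrange-type formula
$$h_{n}(x_1,\ldots,x_m)=\sum_{k=1}^{m}\frac{x_k^{n+m-1}}{\prod_{j\neq k}(x_k-x_j)}.$$
This is immediate for $n\geq 0$, and extends to all $n\geq 1-m$ (with the convention $h_n=0$ for $n<0$), because $\sum_k x_k^{\ell}/\prod_{j\neq k}(x_k-x_j)$ vanishes for $0\leq \ell\leq m-2$ by standard Lagrange interpolation applied to polynomials of degree $<m-1$.

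For the first identity I would apply this formula with $n=i-m+s$ for each $i\geq d$; the hypothesis $s\geq 1-d$ ensures $n\geq 1-m$ for every such $i$. Multiplying by $a_i$, summing over $i\geq d$, and interchanging the order of summation then produces
$$\sum_{i\geq d}a_i\,h_{i-m+s}(x_1,\ldots,x_m)=\sum_{k=1}^{m}\frac{1}{\prod_{j\neq k}(x_k-x_j)}\sum_{i\geq d}a_i\,x_k^{i+s-1}=\sum_{k=1}^{m}\frac{x_k^{s}A(x_k)}{\prod_{j\neq k}(x_k-x_j)},$$
which is the claim.

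For the second identity my plan is to specialize $x_m=x_1$ in the first identity, viewed as a formal series identity in $m$ variables. The left-hand side is manifestly well-defined. On the right, the $k=1$ and $k=m$ terms each carry a factor $(x_1-x_m)$ in their denominator, and their sum has the form $\bigl(f(x_1)-f(x_m)\bigr)/(x_1-x_m)$, where $f(y)=y^{s}A(y)/\prod_{j=2}^{m-1}(y-x_j)$; as $x_m\to x_1$ this difference quotient converges to $f'(x_1)$, which is exactly $\pd{}{x_1}$ applied to the $k=1$ summand of the $(m-1)$-variable sum on the right-hand side of the second identity. For each $2\leq k\leq m-1$, specialization $x_m=x_1$ produces a factor $(x_k-x_1)^{2}$ in the denominator, which is precisely what arises by applying $\pd{}{x_1}$ to the $(m-1)$-variable summand $x_k^{s}A(x_k)/\prod_{j=1,\,j\neq k}^{m-1}(x_k-x_j)$. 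Assembling the two contributions identifies the resulting expression with $\pd{}{x_1}$ of the full $(m-1)$-variable sum, as desired.

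The main obstacle is entirely one of bookkeeping: one must verify that the limit $x_m\to x_1$ may be taken coefficient-by-coefficient in the formal power series, and that the stated range condition on $s$ keeps every $h_{i-m+s}$ inside the regime where the Lagrange-type formula is valid (the slight strengthening from $s\geq 1-d$ to $s\geq 2-d$ precisely accommodates the differentiation step). No deeper input is required; once the Lagrange-type identity is in hand, both assertions follow by formal manipulation.
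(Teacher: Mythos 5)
Your argument is correct, but it takes a genuinely different route from the paper's. For the first identity the paper proceeds determinantally: it introduces auxiliary $m\times m$ matrices $B_i$ (first row $\bx^{s+i-1}$, remaining rows $\bx^{m-2},\ldots,\bx^{0}$), invokes the bialternant formula of Lemma~\ref{lem:det} to write $\det B_i = h_{s+i-m}(\bx)V(\bx)$, sums the $B_i$ into a single matrix $B$ whose first row is $x_j^s A(x_j)$, and then expands $\det B$ along that row. You instead start from the partial-fraction identity $\prod_i(1-x_it)^{-1}=\sum_i\frac{x_i^{m-1}}{\prod_{j\ne i}(x_i-x_j)}\frac{1}{1-x_it}$, extract the coefficient of $t^n$, and extend to $n\ge 1-m$ via the classical vanishing of $\sum_k x_k^\ell/\prod_{j\ne k}(x_k-x_j)$ for $\ell\le m-2$. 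These are of course two faces of the same Vandermonde phenomenon, but your version is more elementary and bypasses Lemma~\ref{lem:det} entirely; the paper's version fits the local context better since Lemma~\ref{lem:det} is already in use nearby.

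For the second identity the difference is more pronounced. The paper uses the one-line generating-function identity $h_k(x_1,\ldots,x_{m-1},x_1)=\pd{}{x_1}h_{k+1}(x_1,\ldots,x_{m-1})$ (obtained by differentiating $\prod_{j=1}^{m-1}(1-x_jt)^{-1}$ with respect to $x_1$), reducing the second claim immediately to the first claim in $m-1$ variables. You instead specialize $x_m=x_1$ directly on the right-hand side of the first identity, pairing the $k=1$ and $k=m$ summands into a difference quotient $\bigl(f(x_1)-f(x_m)\bigr)/(x_1-x_m)\to f'(x_1)$ and observing that the surviving $k\in\{2,\ldots,m-1\}$ summands acquire the factor $(x_k-x_1)^2$ exactly as $\pd{}{x_1}$ would produce. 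Both arguments are sound; the paper's is shorter and cleaner once the generating-function identity is observed, while yours is arguably more transparent about where the derivative comes from and requires no new $h_k$ identity. The bookkeeping caveat you raise (that specialization must commute with the formal manipulations, and that the range condition on $s$ keeps everything in the formal power series ring) is indeed the only thing left to check, and it goes through as you indicate.
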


\begin{proof}
Let $\bx=(x_1,\ldots,x_m)$. 
For every $i \geq d$, let $B_i$ be the $m \times m$ matrix with first row $\bx^{s+i-1}$ and 
with $r$-th row $\bx^{m-r}$,  $r > 1$. Note that the condition $s \geq 1-d$ ensures the entries in the first
row of $B_i$ are polynomial.  Therefore Lemma~\ref{lem:det} gives 
$\det B_i = \hsf{s+i-m}(\bx) V(\bx)$.
Now consider the matrix
\begin{align*}
	B = \begin{bmatrix}
		x_1^s A(x_1) & x_2^s A(x_2) & \cdots & x_m^s A(x_m) \\
		x_1^{m-2} & x_2^{m-2} & \cdots & x_m^{m-2} \\
		x_1^{m-3} & x_2^{m-3} & \cdots & x_m^{m-3} \\
		\vdots & \vdots & \ddots & \vdots \\
		1 & 1 &  \cdots & 1
		\end{bmatrix}.
\end{align*}
Since $x_j^s A(x_j) = \sum_{i \geq d} a_i x_j^{s+i-1}$, we have
\begin{equation*}
	\det B = \sum_{i \geq d} \det B_i = V(\bx) \sum_{i \geq d}  a_i \hsf{s+i-m}(\bx).
\end{equation*}
But expansion along the first row of $B$ gives
\begin{equation*}
	\det B = \sum_{i =1}^m (-1)^{i+1} x_i^s A(x_i) V_i,
\end{equation*}
where  $V_i = \prod_{j < k}^{j,k \neq i} (x_j-x_k)$ is the Vandermonde in the variables $\{x_1,\ldots,x_m\}\setminus\{x_i\}$.  This proves the first statement of the lemma, since $V_i/V(\bx) = (-1)^{i-1} \prod_{j \neq i} (x_i-x_j)^{-1}$.
The second statement follows by noting that
$\hsf{k}(x_1,\ldots,x_{m-1},x_1)=\pd{}{x_1} \hsf{k+1}(x_1,\ldots,x_{m-1})$.  The restriction $s\geq 2-d$ ensures that all expressions are formal power series.
\end{proof}


\bibliographystyle{elsarticle-num}
\bibliography{InequivalentFactorizations}

\begin{thebibliography}{10}
\expandafter\ifx\csname url\endcsname\relax
  \def\url#1{\texttt{#1}}\fi
\expandafter\ifx\csname urlprefix\endcsname\relax\def\urlprefix{URL }\fi
\expandafter\ifx\csname href\endcsname\relax
  \def\href#1#2{#2} \def\path#1{#1}\fi

\bibitem{BedGou92}
F.~B{\'e}dard, A.~Goupil, The poset of conjugacy classes and decomposition of
  products in the symmetric group, Canad. Math. Bull. 35~(2) (1992) 152--160.
\newblock \href {http://dx.doi.org/10.4153/CMB-1992-022-9}
  {\path{doi:10.4153/CMB-1992-022-9}}.

\bibitem{GouJac94}
I.~P. Goulden, D.~M. Jackson, Symmetric functions and {M}acdonald's result for
  top connexion coefficients in the symmetric group, J. Algebra 166~(2) (1994)
  364--378.
\newblock \href {http://dx.doi.org/10.1006/jabr.1994.1157}
  {\path{doi:10.1006/jabr.1994.1157}}.

\bibitem{Jac88}
D.~M. Jackson, Some combinatorial problems associated with products of
  conjugacy classes of the symmetric group, J. Combin. Theory Ser. A 49~(2)
  (1988) 363--369.
\newblock \href {http://dx.doi.org/10.1016/0097-3165(88)90062-3}
  {\path{doi:10.1016/0097-3165(88)90062-3}}.

\bibitem{Stan81}
R.~P. Stanley, Factorization of permutations into {$n$}-cycles, Discrete Math.
  37~(2-3) (1981) 255--262.
\newblock \href {http://dx.doi.org/10.1016/0012-365X(81)90224-7}
  {\path{doi:10.1016/0012-365X(81)90224-7}}.

\bibitem{BouSch_aam00}
M.~Bousquet-M{\'e}lou, G.~Schaeffer, Enumeration of planar constellations, Adv.
  in Appl. Math. 24~(4) (2000) 337--368.
\newblock \href {http://dx.doi.org/10.1006/aama.1999.0673}
  {\path{doi:10.1006/aama.1999.0673}}.

\bibitem{GouSch98}
A.~Goupil, G.~Schaeffer, Factoring {$n$}-cycles and counting maps of given
  genus, European J. Combin. 19~(7) (1998) 819--834.
\newblock \href {http://dx.doi.org/10.1006/eujc.1998.0215}
  {\path{doi:10.1006/eujc.1998.0215}}.

\bibitem{MorVas13}
A.~H. Morales, E.~A. Vassilieva, Direct bijective computation of the generating
  series for 2 and 3-connection coefficients of the symmetric group, Electron.
  J. Combin. 20~(2) (2013) Paper 6, 27.

\bibitem{Hur_ma91}
A.~Hurwitz, Ueber {R}iemann'sche {F}l\"achen mit gegebenen
  {V}erzweigungspunkten, Math. Ann. 39~(1) (1891) 1--60.
\newblock \href {http://dx.doi.org/10.1007/BF01199469}
  {\path{doi:10.1007/BF01199469}}.

\bibitem{GouJac97}
I.~P. Goulden, D.~M. Jackson, Transitive factorisations into transpositions and
  holomorphic mappings on the sphere, Proc. Amer. Math. Soc. 125~(1) (1997)
  51--60.

\bibitem{Str96}
V.~Strehl, Minimal transitive products of transpositions---the reconstruction
  of a proof of {A}.\ {H}urwitz, S\'em. Lothar. Combin. 37 (1996) Art.\ S37c,
  12 pp. (electronic).

\bibitem{ELSV01}
T.~Ekedahl, S.~Lando, M.~Shapiro, A.~Vainshtein, Hurwitz numbers and
  intersections on moduli spaces of curves, Invent. Math. 146~(2) (2001)
  297--327.
\newblock \href {http://dx.doi.org/10.1007/s002220100164}
  {\path{doi:10.1007/s002220100164}}.

\bibitem{GouJacVak01}
I.~P. Goulden, D.~M. Jackson, R.~Vakil, The {G}romov-{W}itten potential of a
  point, {H}urwitz numbers, and {H}odge integrals, Proc. London Math. Soc. (3)
  83~(3) (2001) 563--581.
\newblock \href {http://dx.doi.org/10.1112/plms/83.3.563}
  {\path{doi:10.1112/plms/83.3.563}}.

\bibitem{GouJacVak05}
I.~P. Goulden, D.~M. Jackson, R.~Vakil, Towards the geometry of double
  {H}urwitz numbers, Adv. Math. 198~(1) (2005) 43--92.
\newblock \href {http://dx.doi.org/10.1016/j.aim.2005.01.008}
  {\path{doi:10.1016/j.aim.2005.01.008}}.

\bibitem{OkoPan09}
A.~Okounkov, R.~Pandharipande, Gromov-{W}itten theory, {H}urwitz numbers, and
  matrix models, in: Algebraic geometry---{S}eattle 2005. {P}art 1, Vol.~80 of
  Proc. Sympos. Pure Math., Amer. Math. Soc., Providence, RI, 2009, pp.
  325--414.
\newblock \href {http://dx.doi.org/10.1090/pspum/080.1/2483941}
  {\path{doi:10.1090/pspum/080.1/2483941}}.

\bibitem{Den59}
J.~D{\'e}nes, The representation of a permutation as the product of a minimal
  number of transpositions, and its connection with the theory of graphs,
  Magyar Tud. Akad. Mat. Kutat\'o Int. K\"ozl. 4 (1959) 63--71.

\bibitem{Irv_cjm09}
J.~Irving, Minimal transitive factorizations of permutations into cycles,
  Canad. J. Math. 61~(5) (2009) 1092--1117.

\bibitem{Spr96}
C.~M. Springer, Factorizations, trees, and cacti, in: Eighth International
  Conference on Formal Power Series and Algebraic Combinatorics, University of
  Minnesota, June 25-29, 1996, pp. 427--438.

\bibitem{Eid89}
J.~A. Eidswick, Short factorizations of permutations into transpositions,
  Discrete Math. 73~(3) (1989) 239--243.

\bibitem{Lon89}
J.~Q. Longyear, A peculiar partition formula, Discrete Math. 78~(1-2) (1989)
  115--118.

\bibitem{GouJacLat01}
I.~P. Goulden, D.~M. Jackson, F.~G. Latour, Inequivalent transitive
  factorizations into transpositions, Canad. J. Math. 53~(4) (2001) 758--779.

\bibitem{BerHarNov_prep08}
G.~Berkolaiko, J.~Harrison, M.~Novaes, On inequivalent factorizations of a
  cycle, preprint {\tt arXiv:0809.3476} (2008).

\bibitem{GewMer06}
D.~A. Gewurz, F.~Merola, Some factorisations counted by {C}atalan numbers,
  European J. Combin. 27~(6) (2006) 990--994.

\bibitem{MatNov_fpsac10}
S.~Matsumoto, J.~Novak, Unitary matrix integrals, primitive factorizations, and
  {J}ucys-{M}urphy elements, in: 22nd International Conference on Formal Power
  Series and Algebraic Combinatorics (FPSAC 2010), DMTCS Proceedings, 2010, pp.
  403--412.

\bibitem{MatNov_imrn13}
S.~Matsumoto, J.~Novak, Jucys-{M}urphy elements and unitary matrix integrals,
  Int. Math. Res. Not. IMRN~(2) (2013) 362--397, also {\tt arXiv:0905.1992
  [math.CO]}.
\newblock \href {http://dx.doi.org/10.1093/imrn/rnr267}
  {\path{doi:10.1093/imrn/rnr267}}.

\bibitem{GouGuaNov_cjm13}
I.~P. Goulden, M.~Guay-Paquet, J.~Novak, Monotone {H}urwitz numbers in genus
  zero, Canad. J. Math. 65~(5) (2013) 1020--1042, also {\tt arXiv:1204.2618
  [math.CO]}.
\newblock \href {http://dx.doi.org/10.4153/CJM-2012-038-0}
  {\path{doi:10.4153/CJM-2012-038-0}}.

\bibitem{Col_imrn03}
B.~Collins, Moments and cumulants of polynomial random variables on unitary
  groups, the {I}tzykson-{Z}uber integral, and free probability, Int. Math.
  Res. Not.~(17) (2003) 953--982.
\newblock \href {http://dx.doi.org/10.1155/S107379280320917X}
  {\path{doi:10.1155/S107379280320917X}}.

\bibitem{Chapuy_private13}
G.~Chapuy, Private communication.

\bibitem{GouJac_ejc00}
I.~P. Goulden, D.~M. Jackson, Transitive factorizations in the symmetric group,
  and combinatorial aspects of singularity theory, European J. Combin. 21~(8)
  (2000) 1001--1016.

\bibitem{cartierfoata}
P.~Cartier, D.~Foata, Probl{\`e}mes combinatoires de commutation et
  r{\'e}arrangements, Lecture Notes in Mathematics, Vol. 85, Springer-Verlag,
  Berlin, 1969.

\bibitem{BerKui_pre12}
G.~Berkolaiko, J.~Kuipers, Universality in chaotic quantum transport: {T}he
  concordance between random matrix and semiclassical theories, Phys. Rev. E 85
  (2012) 045201.
\newblock \href {http://dx.doi.org/10.1103/PhysRevE.85.045201}
  {\path{doi:10.1103/PhysRevE.85.045201}}.

\bibitem{BerKui_jmp13a}
G.~Berkolaiko, J.~Kuipers, Combinatorial theory of the semiclassical evaluation
  of transport moments {I}: {E}quivalence with the random matrix approach, J.
  Math. Phys. 54 (2013) 112103, also {\tt arXiv:1305.4875}.
\newblock \href {http://dx.doi.org/10.1063/1.4826442}
  {\path{doi:10.1063/1.4826442}}.

\bibitem{Gar59}
M.~Gardner, Mathematical games, Scientific American 201.

\bibitem{Bog08}
A.~Bogomolny, What, how, and the web: play with braids and knots from
  interactive mathematics miscellany and puzzles,
  http://www.cut-the-knot.org/SimpleGames/TransExample.shtml.

\bibitem{mohar-thomassen}
B.~Mohar, C.~Thomassen, Graphs on Surfaces, Johns Hopkins University Press,
  Baltimore, 2001.

\bibitem{BerHarNov08}
G.~Berkolaiko, J.~M. Harrison, M.~Novaes, Full counting statistics of chaotic
  cavities from classical action correlations, J. Phys. A: Math. Theor. 41~(36)
  (2008) 365102 (12pp).

\bibitem{Tutte_GraphTheory}
W.~T. Tutte, Graph theory, Vol.~21 of Encyclopedia of Mathematics and its
  Applications, Addison-Wesley, Reading, MA, 1984.

\bibitem{JacVis_Atlas}
D.~M. Jackson, T.~I. Visentin, An atlas of the smaller maps in orientable and
  nonorientable surfaces, CRC Press Series on Discrete Mathematics and its
  Applications, Chapman \& Hall/CRC, Boca Raton, FL, 2001.

\bibitem{BerKui_jmp13b}
G.~Berkolaiko, J.~Kuipers, Combinatorial theory of the semiclassical evaluation
  of transport moments {II}: {A}lgorithmic approach for moment generating
  functions, J. Math. Phys. 54~(12) (2013) 123505.
\newblock \href {http://dx.doi.org/10.1063/1.4842375}
  {\path{doi:10.1063/1.4842375}}.

\end{thebibliography}

\end{document}